\let\csname equation*\endcsname\relax
\let\csname endequation*\endcsname\relax
\theoremstyle{plain}
\newtheorem{theorem}{Theorem}[section]
\newtheorem{lemma}[theorem]{Lemma}
\newtheorem{proposition}[theorem]{Proposition}
\crefname{definition}{Definition}{Definitions}
\crefname{figure}{Figure}{Figures}
\crefname{example}{Example}{Examples}
\crefname{section}{Section}{Sections}
\crefname{chapter}{Chapter}{Chapters}
\crefname{table}{Table}{Tables}
\crefname{algorithm}{Algorithm}{Algorithms}
\crefname{myalgo}{Algorithm}{Algorithms}
\crefname{proc}{Procedure}{Procedures}
\theoremstyle{definition}
\newtheorem{definition}{Definition}[section]
\theoremstyle{remark}
\newtheorem{remark}{Remark}[section]
\newcommand{\bitem}{\begin{itemize}}
\newcommand{\eitem}{\end{itemize}}
\newcommand{\bpm}{\begin{pmatrix}}
\newcommand{\epm}{\end{pmatrix}}
\newcommand{\bsm}{\left(\begin{smallmatrix}}
\newcommand{\esm}{\end{smallmatrix}\right)}
\DeclareMathOperator{\prox}{prox}
\begin{document}
\title{Multi-Channel Potts-Based Reconstruction for Multi-Spectral Computed Tomography
}
\author{Lukas Kiefer$^{1,2}$, Stefania Petra$^1$, Martin Storath$^3$ and Andreas Weinmann$^2$}
\address{$^1$ Mathematical Imaging Group, Heidelberg University, Germany}
\address{$^2$ Department of Mathematics and Natural Sciences, University of Applied
Sciences Darmstadt, Germany}
\address{$^3$ Department of Applied Natural Sciences and Humanities, University
of Applied Sciences W\"urzburg-Schweinfurt, Germany}

\begin{abstract}
We consider reconstructing multi-channel images from measurements performed by 
photon-counting and energy-discriminating detectors
in the setting of multi-spectral X-ray computed tomography (CT).
Our aim is to exploit the strong structural correlation that is known to exist between the channels
of multi-spectral CT images. To that end, we adopt the multi-channel Potts prior
to jointly reconstruct all channels. This nonconvex prior produces piecewise constant solutions 
with strongly correlated channels. In particular, edges are 
strictly enforced to have the same spatial position across channels
which is a benefit over TV-based methods whose channel-couplings are
typically less strict. 
We consider the Potts prior in two frameworks:
(a) in the context of a variational Potts model, and
(b) in a Potts-superiorization approach that perturbs the iterates of
a basic iterative least squares solver.
We identify an alternating direction method
of multipliers (ADMM) approach as well as a
Potts-superiorized conjugate gradient method as 
particularly suitable.
In numerical experiments, we compare the Potts prior based approaches
to existing TV-type approaches on realistically simulated multi-spectral CT data
and obtain improved reconstruction for compound solid bodies.
\\[1em]
Keywords: multi-spectral computed tomography, image reconstruction, structural regularization, multi-channel Potts prior, superiorization, Potts model, piecewise constant Mumford-Shah model,
non-convex optimization, ADMM
\end{abstract}

\section{Introduction}

We consider the \emph{multi-channel} reconstruction problem which arises in multi-spectral X-ray computed tomography (CT).
X-ray imaging entails \emph{polychromatic} X-ray sources, i.e.~emitted photons have a spectrum of energies. 
Conventional energy-integrating CT detectors do not capture
different energies.
However, there are materials 
which may not be distinguished from one another in conventional CT
as their linear attenuation coefficients
(LAC) are nearly equal.
Yet, their LAC's might differ when the whole energy spectrum is
considered. Thus, such materials may be distinguished, when measurements at multiple energy-levels are available.
In this paper we exploit this phenomenon by using
\emph{energy-discriminating} photon counting detectors \cite{shikhaliev2008energy}.

As in conventional CT, due to acquisition noise, sampling effects and modeling effects,
the task of reconstructing volume functions from multi-spectral CT data corresponds to solving
an ill-posed inverse problem. This calls for regularization which is often 
performed by imposing prior structural knowledge on the unknown
solution in the form of a penalty called prior function. 
Typically, the prior function is made part of an energy minimization model of  Tikhonov-type;
see, e.g., \cite{rigie2015joint,kazantsev2018joint}.
In principle, one could apply conventional priors and data models in CT
to reconstruct separately each channel of the \emph{multi-channel} images 
in multi-spectral CT.
However, it is well-known that the channel images in multi-spectral CT are 
strongly correlated:
transitions between materials manifest as edges in the channel images 
and these edges share the \emph{same spatial positions} across the channels.
Therefore, an appropriate prior should
enforce this spatial correlation of the channels and
aid the reconstruction of particularly noise-prone channels on hand of
the less problematic channels.
To this end,  multi-channel extensions of the total variation prior have been
proposed \cite{gao2011multi,rigie2015joint,kazantsev2018joint,toivanen2020joint,KieferP17}.
Typically, these TV-type priors enforce 
inter-channel correlations by imposing higher costs if the edges 
are not aligned over the channels. 
However, these priors are essentially based on penalizing the $\ell_1$-norm of the channel-wise gradients, and do not necessarily ensure that
the resulting boundaries between materials
completely align 
over the channels. 
Another side-effect of penalizing the $\ell_1$-norm of the gradient, is that it leads to contrast reduction, i.e.~in the reconstructed image
the jump heights between regions with constant intensities 
are reduced.

In this paper we propose the use of the \emph{Potts prior} for the reconstruction
of compound solid bodies from multi-spectral CT measurements. 
The Potts prior penalizes the length of the jump set of the image,
i.e. the support of the gradient
as a subset of the domain.
As a result, it produces piecewise constant images with sharply localized edges
such that the total boundary length
of all segments is small.
Further, the Potts prior has no contrast diminishing effect
\cite{storath2013jump}.
The Potts prior is an established nonconvex regularizer for 
joint image reconstruction and partitioning as well as for the reconstruction of approximately 
piecewise constant images given via indirect measurements;
see \Cref{sec:RelatedWork} for a discussion on related work.

Its multi-channel extension --the \emph{multi-channel Potts prior}--
has the appealing property that jumps 
in multiple channels only cost once if they are located 
in the same spatial position.
Therefore, in comparison to TV-type priors, the multi-channel Potts prior produces
sharply localized edges which are \emph{strictly} enforced
to share spatial positions across  channels.
The strong channel-coupling provided by the multi-channel Potts prior 
is especially attractive for multi-spectral CT reconstruction
of compound solid bodies.
We illustrate this by
comparing the multi-channel Potts prior to existing TV-based priors. 
We note that for piecewise smooth images, as they often appear in medical applications, the use of 
piecewise smooth Mumford-Shah models is advisable
\cite{bar2004variational,fornasier2010iterative,hohm2015algorithmic}.
In the context of smooth signals, the result of the Potts prior may be interpreted
as a partitioning of the signal. 

Being nonsmooth and nonconvex, the (multi-channel) Potts prior calls for tailored algorithmic approaches. 
To this end, we consider two approaches based on energy minimization and superiorization, respectively. 
In particular, we apply the multi-channel Potts prior within the variational \emph{Potts model} 
(also known as the piecewise constant Mumford-Shah model)
and within a new 
\emph{Potts-based superiorization approach}.

\subsection{Related Work}
\label{sec:RelatedWork}
We first review work related to multi-spectral CT, followed by work on
the Potts model. We conclude with
work on superiorization.

\emph{The reconstruction problem and priors in multi-spectral CT.}
We start out with related work concerning the reconstruction problem in multi-spectral CT.
Kazantsev et al.~\cite{kazantsev2018joint} consider multi-spectral measurements 
obtained from 
energy-discriminating photon-counting detectors and propose a
prior based on directional total variation (dTV) \cite{bayram2012directional} 
to exploit structural similarities between  channels.
The reference channels for dTV are chosen probabilistically in each iteration on the basis of
signal-to-noise ratios. 
The authors compare their method to other 
existing TV-based approaches by reconstructing the geocore phantom, which we also consider
in our experiments.
Toivanen et al. \cite{toivanen2020joint} study variational methods for the reconstruction 
of three channels obtained by sequential measurements following low dose acquisition protocols.
In \cite{gao2011multi}, the authors propose an approach of compressive sensing type, where 
the multi-channel reconstruction is modeled as the sum of a low-rank and a sparse matrix.
In \cite{rigie2015joint}, the authors use total nuclear variation to regularize the
reconstruction and enforce structural correlation between channels.
Furthermore, channel-coupling regularizers based on tensor nuclear norms  
are proposed in \cite{SemerciHKM13}.

Another interesting problem in multi-spectral CT --apart from the reconstruction problem-- is
the material decomposition problem.
It describes the task of determining the pixel-wise material composition 
after the multi-channel image has been reconstructed (typically with channel-wise
filtered backprojection).
Variational approaches to the material decomposition problem were proposed in  \cite{Long_2014,ducros2017regularization,ding2018image}.

\emph{The Potts model.}
The Potts prior is often used in the context of energy minimization methods. 
This class of methods model the result as the minimizer of an energy function that
comprises a data fidelity term and a regularizing term.
Thereby, the result is close to the available data and
it is regularized by enforcing prior knowledge.
The Potts prior together with a data fidelity term 
yields the Potts model.
We start with work on the Potts model for directly sensed image data. 
Originally named after R. Potts due to his work on statistical mechanics \cite{potts1952some}, 
Geman and Geman \cite{geman1984stochastic} adopted the Potts model for 
the edge-preserving reconstruction of piecewise constant images and proposed an algorithmic approach
based on simulated annealing.
The problem was first considered from the viewpoint of variational calculus by Mumford and Shah \cite{mumford1989optimal}.
Shortly afterwards, Ambrosio and Tortorelli proposed approximations by smooth functionals 
which are frequently adopted for algorithmic approaches \cite{ambrosio1990approximation}.
Other popular algorithmic approaches use active contours 
\cite{chan2001active}, graph cuts
\cite{boykov2001fast}, convex relaxations 
\cite{pock2009convex,chambolle2012convex} and the ADMM \cite{storath2014fast}. 
We proceed with related work on the Potts model for inverse problems.
Existence of minimizers in the inverse setting has been investigated 
in  \cite{fornasier2013existence,fornasier2010iterative,ramlau2010regularization,jiang2014regularizing,storath2013jump}.
Ramlau and Ring proved that, under mild assumptions on the imaging operator 
and possible function values,
the Potts model is a regularizer in the sense of inverse problems 
\cite{ramlau2010regularization}. 
For deconvolution problems, level-set based active contour methods 
\cite{kim2002curve} and Ambrosio-Tortorelli type approaches
have been proposed \cite{bar2004variational}.
Concerning Radon measurements, Ramlau and Ring propose a method based on active contours \cite{ramlau2007mumford} and 
a similar method for SPECT/CT data is proposed in \cite{klann2013regularization,klann2011mumford}.
We note that methods based on active contours have the disadvantage that they require a relatively
good guess for the initialization and for the expected number of gray values in the image.
In \cite{storath2013jump}, the problem for general imaging operators  is approached
by an iterative graph cut strategy. 
We remark that graph cut approaches need to work on a discretized 
codomain. Therefore, they either need a good guess on the values of the unknown image or a very fine 
discretization of the codomain which can become very expensive.
Another class of approaches
is based on solving sequences of surrogate problems 
\cite{fornasier2010iterative,weinmann2015iterative}.
Finally, in \cite{storath2015joint} the authors propose an ADMM approach. 
Inspired by this, we adapt ADMM to the multi-spectral CT reconstruction problem, as detailed in
\Cref{sec:PottsModel}.
A thorough treatment of optimization strategies based on the ADMM can
be found in \cite{boyd2011distributed}.

\emph{Superiorization.}
The \emph{superiorization methodology} \cite{davidi2009perturbation,herman2012superiorization} 
is an alternative to energy minimization approaches. 
Algorithms emerging from energy minimization approaches
typically alternate between a data fidelity step and a regularization step.
Superiorization methods  also alternate between these two steps as follows: 
first, one takes an iterative \emph{basic algorithm} that is typically
'feasibility-seeking' (i.e.~finding some point that is compatible with a family of constraints or an 
image that complies with the measured projection data) 
and then investigates its \emph{perturbation resilience} (i.e.~whether
its termination is still guaranteed when the iterates are appropriately perturbed).
Secondly, one defines a target function which expresses exogenous or prior knowledge of the domain in question. In each iteration, the iterate is perturbed towards a non-ascending direction of this target function. It suffices that this perturbation merely improves on the target function without necessarily minimizing it.
The overall process leads to a solution of the basic problem that is superior  to the one that would have been reached without the interlaced perturbations
(with respect to the target function)
without paying a high computational price.
The terms ``superiorization'' and ``perturbation resilience'' first appeared in \cite{davidi2009perturbation} and early developments  were presented in  \cite{butnariu,brz06,brz08}.
Important notions concerning superiorization were  introduced in \cite{herman2012superiorization}.
A condensed introduction to the superiorization methodology and examples can be found in
\cite{censor2019derivative}. 
Superiorization is an active research field \cite{Sup-Special-Issue-2017} 
and current research can be found in the continually updated bibliography in \cite{censor2015superiorization}. Theoretical work is typically concerned with
perturbation resilience which lies at the heart of the superiorization methodology \cite{brz06,Bargetz2018}. Only few 
works \cite{Censor-Levi:2019,cz3-2015} have addressed  the 'guarantee problem', that is, a mathematical guarantee that the overall process of the superiorized version
of the basic algorithm will not only retain its feasibility-seeking nature but also preserve globally the target
function reductions.
Similarities between superiorization methods and optimization methods have been studied in
\cite{byrne2019simulations,censor2020superiorization}.
In \cite{helou2018superiorization,zibetti2018super}, 
the (preconditioned) conjugate gradient algorithm is perturbed by non-ascending directions w.r.t.\,the total variation prior for (single-channel) tomographic image reconstruction problems.
Therein, the authors also show theoretical perturbation resilience for the conjugate gradient (CG).
In this work, we make extensive use of this property and superiorize CG  by a non-convex and non-continuous target function.

\subsection{Contribution}
We investigate reconstruction schemes for multi-spectral
CT problems based on the multi-channel Potts prior 
which enforces strong spatial correlation between the channels.
We investigate two approaches.
In the first approach, called \emph{Potts ADMM}, we apply the Potts prior within the variational Potts model
which leads to a non-convex optimization problem. 
We adapt the ADMM approach of \cite{storath2015joint}
to the multi-channel reconstruction of multi-spectral CT data.	
In the second approach, called \emph{Potts S-CG}, we propose a new superiorization approach
that perturbs the iterates of the conjugate gradient (CG) method
with a block-wise Potts prior as target function towards Potts regularized solutions.
As the Potts prior is neither continuous nor convex, 
existing derivative-based methods for perturbations with TV \cite{zibetti2018super,censor2020superiorization}
cannot be employed directly.
Consequently, we employ a derivative-free perturbation approach
to superiorize  CG w.r.t.~the 
Potts prior.
We identify Potts ADMM and Potts S-CG
as suitable choices for the Potts model and Potts superiorization by
comparing
the ADMM to a penalty method for the Potts model, and the superiorized CG method to a Landweber iteration.
In numerical experiments, we illustrate the benefits of 
the multi-channel Potts prior
in the reconstruction of multi-channel images 
from multi-spectral CT data. 
Specifically we compare the proposed Potts ADMM and  Potts S-CG  to  existing TV-type approaches by applying them to realistically simulated multi-spectral CT data.
To the authors' knowledge the Potts prior has
neither been considered in the context of multi-spectral CT nor in the context of the superiorization methodology yet.

\subsection{Organization of the Paper}
In \Cref{sec:MultispectralCTimaging}, we briefly explain the measurement process and
the forward model in multi-spectral CT.
\Cref{sec:ThePottsPrior} explains the (multi-channel) Potts prior and its benefits in 
multi-channel image reconstruction.
In \Cref{sec:PottsModel}, we briefly explain the Potts model and an algorithmic approach
based on  ADMM.
In \Cref{sec:PottsSuperiorization}, we derive new Potts-based superiorization approaches, which employ
the Potts prior to perturb the iterates of the CG method.  
In \Cref{sec:ComparisonADMMandPottsCG}, we compare the Potts ADMM approach and 
the proposed Potts-based superiorization approach to 
a penalty method and a Landweber superiorization approach.
In \Cref{sec:Experiments}, we apply both methods to image reconstruction from simulated multi-spectral CT data and compare 
the results to existing methods.
In \Cref{sec:Conclusion}, we draw conclusions.

\section{Measurements and Reconstructions in Multi-Spectral CT}
\label{sec:MultispectralCTimaging}
\subsection{The Forward Problem in Multi-Spectral CT}
We consider the  spectral version of Beer's law 
\begin{equation}\label{eq:BeersLawMulti}
I_1(\epsilon)=I_0(\epsilon)\exp\left(\int_{\mathcal{L}}- \hat{u}(z, \epsilon)\, dz \right),
\end{equation}
where $\epsilon$ denotes the energy, $I_1$ is the spectrum of the X-ray beam
incident on the detector, $z\in\Omega$ denotes the spatial position.
Further, $\hat{u}$ holds the \emph{energy dependent}
linear attenuation coefficients 
that need to be reconstructed and $I_0$ is the \emph{energy dependent} intensity
flux of the X-ray source corresponding to ray $\mathcal{L}$ from the source to a given
detector element.

In the following, we consider 
$M$ detectors and assume that the measurements 
\eqref{eq:BeersLawMulti} are taken from $p$ equidistant angles.
Thus, a total number of $m=pM$ discrete measurements are available.
Towards a discrete model, we discretize the (unknown) function $\hat{u}$ 
on the continuous domain $\Omega$
which results in a function $u$ on an $n\times n$ pixel grid $\Omega'$.
It is given by
\begin{equation}
u(j,\epsilon)=\sum_{z\in\Omega} \chi_j(z) \hat{u}(z,\epsilon),
\end{equation}
where $\chi_j$ denotes the characteristic function
corresponding to the pixel $j\in\Omega'$.
From \eqref{eq:BeersLawMulti} and for pairs of source-detector positions indexed by $i$, we now 
obtain
\begin{equation}\label{eq:BeerMultiDiscrete}
I_{1,i}(\epsilon)=I_{0,i}(\epsilon)
\exp \Big(- \sum_{j\in{\Omega'}} A_{ij}\,
u(j,\epsilon) \Big),\quad i=1,\ldots,m,
\end{equation}
where we denote by
\begin{equation}
A_{ij}:=\int_{\mathcal{L}_i} \chi_j(z)\, dz, \quad i=1,\ldots,m, \quad j\in\Omega',
\end{equation}
the intersection length of ray $i$ with pixel $j$. (In particular, 
$A_{ij}=0$ if the ray $i$ does not intersect pixel $j$.)
We assume that each photon-counting detector 
acquires the measurements on an 
interval of energies $[\epsilon_c,\epsilon_{c+1})$.
Hence, the detectors provide the measurements
\begin{equation}\label{eq:DiscreteMeasurements1}
Y_{i,c}=\int_{\epsilon_c}^{\epsilon_{c+1}} I_{0,i}(\epsilon)
\exp \Big(- \sum_{j\in{\Omega'}} A_{ij}
u(j,\epsilon) \Big)\, d\epsilon,\quad i=1,\ldots,m,\, c=1,\ldots,C.
\end{equation}
After discretizing the energy spectrum with step-size $\delta>0$, we obtain
\begin{equation}\label{eq:DiscreteMeasurements}
Y_{i,c}\approx\sum_{\epsilon=\epsilon_c,\epsilon_c+\delta,\ldots,\epsilon_{c+1}-\delta} I_{0,i}(\epsilon)
\exp \Big(- \sum_{j\in{\Omega'}} A_{ij}\,
u(j,\epsilon) \Big),\quad i=1,\ldots,m,\, c=1,\ldots,C.
\end{equation}
The following simplified measurement model becomes increasingly accurate
for fine energy resolutions, i.e., if $\delta$ is small,  \cite{kazantsev2018joint}:
\begin{equation}\label{eq:DiscreteMeasurementsSimplified}
Y_{i,c}\approx I_{0,i}(\epsilon_c)
\exp \Big(- \sum_{j\in{\Omega'}} A_{ij}\,
u(j,\epsilon_c) \Big),\quad i=1,\ldots,m,\, c=1,\ldots,C.
\end{equation}
After taking the logarithm, \eqref{eq:DiscreteMeasurementsSimplified}
corresponds to 
\begin{equation}
f_{i,c} \approx
\sum_{j\in{\Omega'}} A_{ij}\,
u(j,\epsilon_c) ,\quad i=1,\ldots,m,\, c=1,\ldots,C,
\end{equation}
where we denote $f_{i,c}=-\log \Big(\frac{Y_{i,c}}{I_{0,i}(\epsilon_c)}\Big)$.
Towards a compact notation, we let 
$f_c\in \R^{m}$ hold the $f_{i,c}$ for all rays $i=1,\ldots,m=pM$ and define the linear operator
$A = (A_{i,j})_{i=1,\ldots,m,j\in\Omega'} : \R^{n\times n}\to \R^m$,
which holds the \emph{ray incidence geometry} of the measurement setup.
Hence, we obtain the linear measurement model 
\begin{equation}\label{eq:linearMeasurementModel}
f_c \approx A u_c \quad 
\end{equation}
for the channels $c = 1,\ldots,C$.

It is common to assume 
acquisition noise for photon counting detectors, that is,
the measurements are Poisson distributed. 
Thus, in a setup incorporating noise,
the corresponding variant of \eqref{eq:DiscreteMeasurementsSimplified}
is given by 
\begin{equation}\label{eq:Poisson}
Y_{i,c}\sim \mathrm{Poiss}\Big\{I_{0,i}(\epsilon_c)\exp \Big(- \sum_{j\in{\Omega'}} A_{ij}\,
u(j,\epsilon_c) \Big)\Big\}.
\end{equation}
As a result, note that $f_c$ in \eqref{eq:linearMeasurementModel} is not deterministic due to \eqref{eq:Poisson}.
The multi-spectral CT reconstruction problem corresponds to determine
$u_1,\ldots,u_C$ from the measurements \eqref{eq:linearMeasurementModel}.
\begin{remark}
	We note that the made assumptions guarantee a measurement model
	which is linear.
	This model is 
    frequently used in the literature
     \cite{gonzales2010full,gao2011multi,SemerciHKM13,rigie2015joint,kazantsev2018joint,toivanen2020joint}.
   	We briefly recall its known limitations.
	\begin{enumerate}[label=(\roman*)]
		\item For coarse energy resolutions, the step between \eqref{eq:DiscreteMeasurements} and \eqref{eq:DiscreteMeasurementsSimplified} becomes increasingly inaccurate.
		\item We made the implicit assumption that the
		energy-dependent detection answers in \eqref{eq:DiscreteMeasurements1} are represented
		by characteristic functions $\chi_{[\epsilon_c,\epsilon_{c+1})}$ of the 
		pairwise disjoint intervals	$[\epsilon_c,\epsilon_{c+1})$.
		More realistic detector answer functions would not be constant on the corresponding energy 
		spectrum nor would they have disjoint supports.
	\end{enumerate}
\end{remark}

\subsection{Reconstruction}
In multi-spectral CT, it is common to use model-based reconstruction approaches
as a direct inversion of \eqref{eq:linearMeasurementModel} is not feasible due to 
the present acquisition noise \eqref{eq:Poisson}, sampling effects, and 
necessary simplifications in the modeling process.
Instead, the reconstruction of $u$ is usually modeled as the minimizer of 
an energy function.
Typically, such an energy function corresponds to the
(weighted) sum of a data term $\mathcal{D}$ and a regularizer $\mathcal{R}$, i.e.,
the reconstruction is modeled as the solution of a minimization problem
of the form
\begin{equation}\label{eq:ReconstructionGeneric}
\argmin_{u\in\R^{n\times n \times C}}
\sum_{c=1}^{C}\mathcal{D}(Au_c,f_c) + \gamma\,\mathcal{R}(u).
\end{equation}
The data term $\mathcal{D}$ promotes closeness to the data $f$ and $\mathcal{R}$
imposes regularity on $u$ by penalizing deviations from a priori 
fixed assumptions on $u$.
The parameter $\gamma>0$ balances the two terms.

\paragraph{Data term.}
Concerning the data term $\mathcal{D}$, a natural choice would be 
based on the log-likelihood associated with the Poisson distribution \cite{shepp1982maximum}.
However, for non-trivial imaging operators $A\neq \id$ as in \eqref{eq:Poisson}, 
employing such a data term becomes computationally expensive.
Thus, it is common for CT reconstruction problems to employ 
the penalized weighted least squares model (PWLS) instead \cite{ding2018image,kazantsev2018joint,SemerciHKM13}.
The PWLS is a quadratic approximation to the log-likelihood function \cite{sauer1993local},
so that it is computational more tractable for 
non-trivial imaging operators.
The PWLS is given by
\begin{equation}
\label{eq:DataTerm}
\mathcal{D}(Au_c,f_c) = \| Au_c -f_c\|_{W_c}^2 = 
\| W_c^{\frac{1}{2}}Au_c - W_c^{\frac{1}{2}}f_c\|^2,
\end{equation}
where each $W_c\in\R^{m\times m}$ is a diagonal matrix which weighs the measurements
within the respective channel.
More precisely, the entries of $W_c$ are chosen as
the number of detected photons in the $c$-th energy bin \eqref{eq:DiscreteMeasurementsSimplified}.
(Please note that $\|\cdot\|$ corresponds to the Frobenius norm, i.e.,
$\| u \|^2 = \sum_{i,j,c} u_{ijc}^2$.)

\paragraph{Regularizing term.}
In general, an appropriate regularizing term $\mathcal{R}$ in \eqref{eq:ReconstructionGeneric} should
enforce prior knowledge on the unknown result $u$.
For example, compound solid bodies are (approximately) piecewise homogeneous so that 
an adequate regularizer may enforce piecewise constancy on $u$. 
We note that it is generally assumed that the channels $u_1,\ldots,u_C$ of multi-spectral CT
images are strongly correlated
\cite{kazantsev2018joint,toivanen2020joint,ding2018image}.
In particular, it was observed that
the edges in the channels are spatially correlated, i.e., they are located
at the same positions across the channels.
As a result, a regularizer $\mathcal{R}$ in \eqref{eq:ReconstructionGeneric} should
enforce prior structural knowledge on each channel
\emph{as well as} strong correlation between the channels.

In this paper, we propose to use the (multi-channel) 
Potts prior for $\mathcal{R}$, which combines structural knowledge and strong channel correlation 
as explained in the next section.	
\section{The Potts Prior for Multi-Spectral CT Reconstruction}
\label{sec:ThePottsPrior}
The Potts prior is an established 
regularizer for the reconstruction from indirectly given data
\cite{ramlau2007mumford,ramlau2010regularization,
	klann2011mumford,klann2011mumfordElectron,storath2015joint,weinmann2015iterative,kiefer2018iterative}.
In particular,
the reconstruction is modeled as the minimizer of an energy functional 
which consists of the Potts prior and some data term.
The thereby obtained model is called the \emph{Potts model} 
(sometimes also referred to as piecewise constant Mumford-Shah model).
We point out that, under mild assumptions,
the Potts model is a regularizer in the sense of inverse problems 
\cite{ramlau2010regularization}.

We start out by describing the Potts prior for single-channel 
image recovery. Subsequently, we discuss its extension to multi-channel images
and explain how this multi-channel Potts prior enforces strong correlation
between channels. This property is especially beneficial when dealing
with multi-spectral CT data since the edges in different energy channels
are generally 
assumed to be strongly spatially correlated.

\subsection{The Single-Channel Potts Prior}
\label{sec:PottsPrior}
In the following, we let $\hat{u}:\Omega\to\R$ be an image defined on the
rectangular domain $\Omega\subset\R^2$.
We denote the \emph{Potts prior} by the symbol $\| \nabla \hat{u} \|_0$.
Formally, $\| \nabla \hat{u}\|_0$ measures the length of the support of the (distributional)
gradient of $\hat{u}$, i.e.,
\begin{equation}
\label{eq:PottsPriorSingleChannel}
\|\nabla \hat{u} \|_0 = \mathrm{length}\,\Big(\big\{ x\in \Omega: \nabla \hat{u}(x) \neq 0\big\}\Big),
\end{equation}
where $\rm length$ is understood w.r.t.\,the one-dimensional Hausdorff measure.
To fix ideas, for a piecewise constant image $\hat{u}$ 
with sufficiently regular discontinuity set, the Potts prior \eqref{eq:PottsPriorSingleChannel}
measures the total arc length of this discontinuity set.
An important observation is that
$\|\nabla \hat{u}\|_0$ is finite if and only if the image $\hat{u}$ is \emph{piecewise constant},
so that the results exhibit crisp boundaries which are sharply localized.

Frequently, the Potts prior is replaced by 
the total variation prior which is given by $\|\nabla \hat{u} \|_1$ \cite{rudin1992nonlinear}.
Using the TV prior typically corresponds to solving convex problems, while
using the Potts prior typically leads to nonconvex problems.
Hence, the TV prior is theoretically and algorithmically easier to access 
than the Potts prior.
However, the results of 
TV often lack sharply localized boundaries in situations with limited-data 
\cite{chartrand2007exact,chartrand2009fast} and it can lead to contrast reduction
\cite{storath2013jump}.
Furthermore, as a piecewise constant image directly corresponds to a partitioning,
the Potts prior may also be used for joint reconstruction and partitioning.
Joint approaches typically yield better results than executing these steps successively \cite{klann2011mumford,ramlau2007mumford,ramlau2010regularization,storath2015joint}.

In practice, data are discrete and we
consider images $u:\Omega'\to\R$ on a lattice
$\Omega' = \{1,\ldots,n\}\times \{1,\ldots,n\}$.
A common discretization of the 
gradient in \eqref{eq:PottsPriorSingleChannel}
is given in terms of finite differences 
\cite{chambolle1995image,chambolle1999finite,storath2014fast} via
\begin{equation}\label{eq:discreteL0}
\| \nabla \hat{u}\|_0 \approx \sum_{s=1}^{S}\omega_s \| \nabla_{d_s} u \|_0,
\end{equation}
where $\|\nabla_{d_s}u \|_0 = |\{x\in\Omega: x+d_s\in\Omega,\,u(x)\neq u(x+d_s)\}|$
counts the number of intensity changes of $u$ in the direction $d_s\in\Z^2$ weighted
by $\omega_s > 0$. Consequently, the $\ell_0$-terms
on the righthand side of \eqref{eq:discreteL0} promote
sparsity in the number of intensity changes of $u$.
The simplest choice of directions are the unit vectors $d_1=(1,0)^T,\,d_2=(0,1)^T$
together with unit weights $\omega_{1,2} = 1$. 
This neighborhood system, however, can lead
to results which suffer from geometrical staircasing 
\cite{storath2014fast}: artificial block artifacts may be introduced
to account for edges in the image to be reconstructed which are not parallel to the coordinate axes.
This can be improved upon by including
the diagonal directions $d_3 = (1,1)^T,\,d_4=(1,-1)^T$ and using the weights 
$\omega_{1,2} = \sqrt{2}-1, \omega_{3,4}=1-\frac{\sqrt{2}}{2}$;
for details see \cite{chambolle1999finite,Boykov2003,storath2014fast}.
Because of the improved quality, 
we will use this system of directions and weights throughout the rest of the paper
if not stated otherwise.

\subsection{The Multi-Channel Potts Prior}
\label{sec:MultichannelPottsPrior}
In the following, we give an extended description of 
the multi-channel version of the Potts prior \eqref{eq:PottsPriorSingleChannel} 
which has been previously used for regularizing 
given color and multispectral images; see, e.e., \cite{storath2014fast}.
To this end, we start with a multi-channel
image $\hat{u}:\Omega\to \R^C$ with codimension $C\in\N$. The $c$-th component
function of $\hat{u}$ is denoted by $\hat{u}_c$.
Further, we denote
the (distributional) Jacobian of $\hat{u}$ analogously to the single-channel case by
$\nabla \hat{u}$. Similarly to the Potts prior for $C=1$ \eqref{eq:PottsPriorSingleChannel},
the Potts prior for multi-channel images is given by the length
of the support of the (distributional) Jacobian, i.e.,
\begin{equation}
\label{eq:PottsPriorMultiChannel}
\|\nabla \hat{u} \|_0 = \mathrm{length}\,\Big(\big\{ x\in \Omega: \nabla \hat{u}(x) \neq 0\big\}\Big),
\end{equation}
where the length is understood as in \eqref{eq:PottsPriorSingleChannel}.
The discrete counterpart of \eqref{eq:PottsPriorMultiChannel} 
for the function $u:\Omega\to\R^C$ on the lattice $\Omega = \{1,\ldots,n\}\times \{1,\ldots,n\}$ is 
now given by 
\begin{equation}\label{eq:PottsPriorDiscreteMultichannel}
\| \nabla \hat{u} \|_0 \approx \sum_{s=1}^{S} \omega_s \| \nabla_{d_s} u \|_0,
\end{equation}
where the righthand side counts the number of finite difference vectors
which are not fully zero, i.e.,
\begin{equation}\label{eq:discreteL0Multichannel}
\|\nabla_{d_s}u \|_0 = \big|\Big\{x\in\Omega: x+d_s\in\Omega,\,u_c(x)\neq u_c(x+d_s) 
\text{ for at least one } c\in \{1,\ldots,C\} \Big\}\big|.
\end{equation}
The integer vectors $d_s\in\Z^2$ and weights $\omega_s>0$ 
have the same role as in the single-channel case \eqref{eq:discreteL0}.
In respect of \eqref{eq:discreteL0Multichannel}, the costs
of the multi-channel Potts prior for
a jump between $x$ and $x+d_s$ in all channels are the same
as for opening a jump in a single channel only.
On the one hand, this enforces potential jumps at close positions to be spatially aligned across channels
in view of the lower costs compared with multiple jumps in different channels
(\emph{jump alignment}).
On the other hand, jumps are always introduced for all channels simultaneously
as a higher closeness to the data can be ensured
(\emph{jump consistency}).
Thus, the edges of multi-channel images
regularized by the multi-channel Potts prior 
are spatially aligned across all channels, or,
in other words, 
the spatial locations of the channel-wise edges of the produced image are 
enforced to be completely correlated.
In Figure \ref{fig:MultichannelPotts},
we illustrate the jump alignment and the jump consistency
provided by the multi-channel Potts prior 
by comparing it to the channel-wise Potts prior 
for dual-channel one-dimensional data.
\begin{figure}[t]
	\centering
	\captionsetup[subfigure]{justification=centering}
	\def\figwidth{0.38\textwidth}
	\def\hs{\hspace{3em}}	
	\def\vs{\\[0.25em]}
	\begin{subfigure}{\figwidth}
		\caption*{\textbf{Jump alignment}}
		\includegraphics[width=\textwidth]{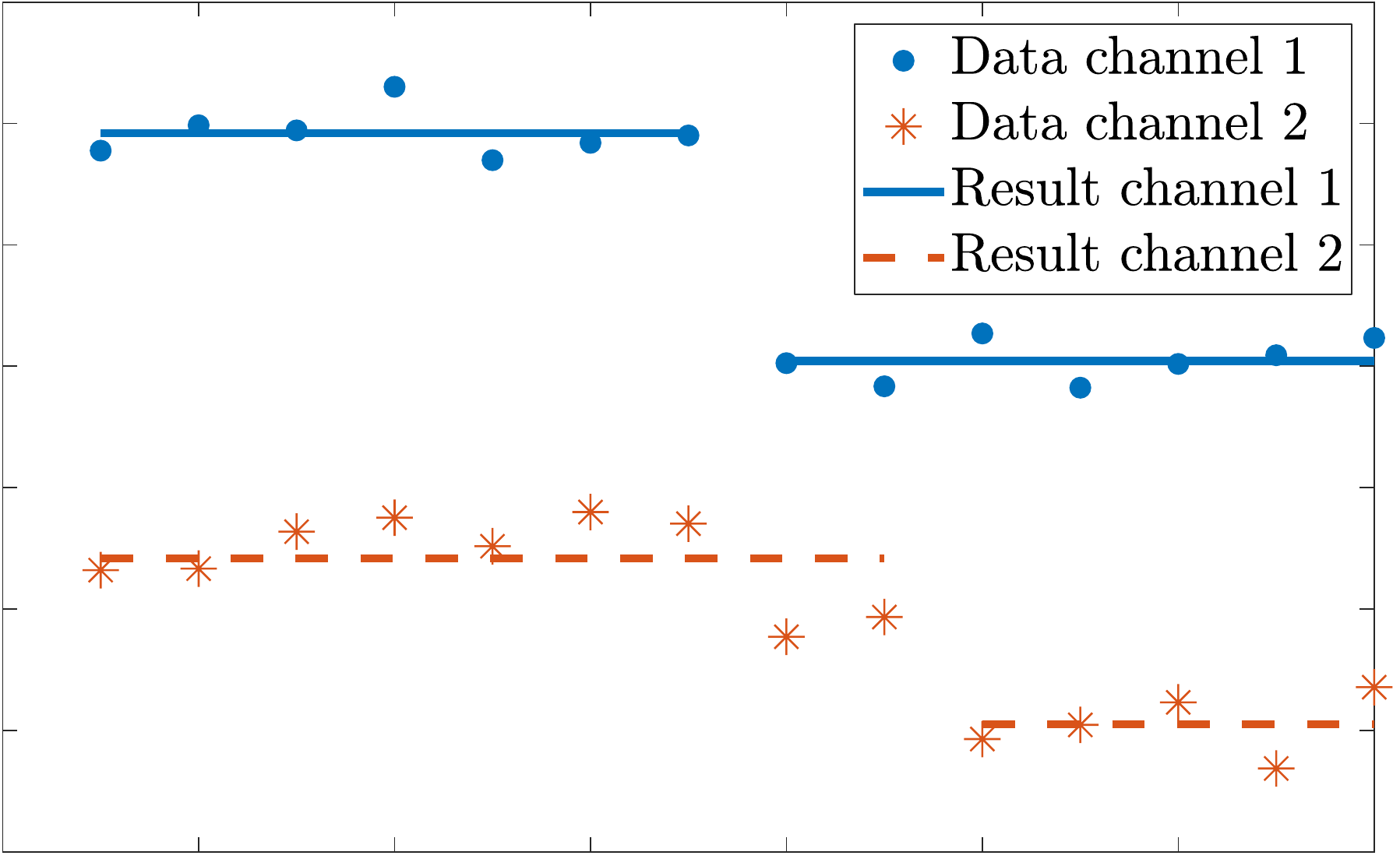}
		\subcaption{Channel-wise Potts prior\label{subfig:alignJumps_channelwise}}
	\end{subfigure}\hs
	\begin{subfigure}{\figwidth}
		\caption*{\textbf{Jump consistency}}
		\includegraphics[width=\textwidth]{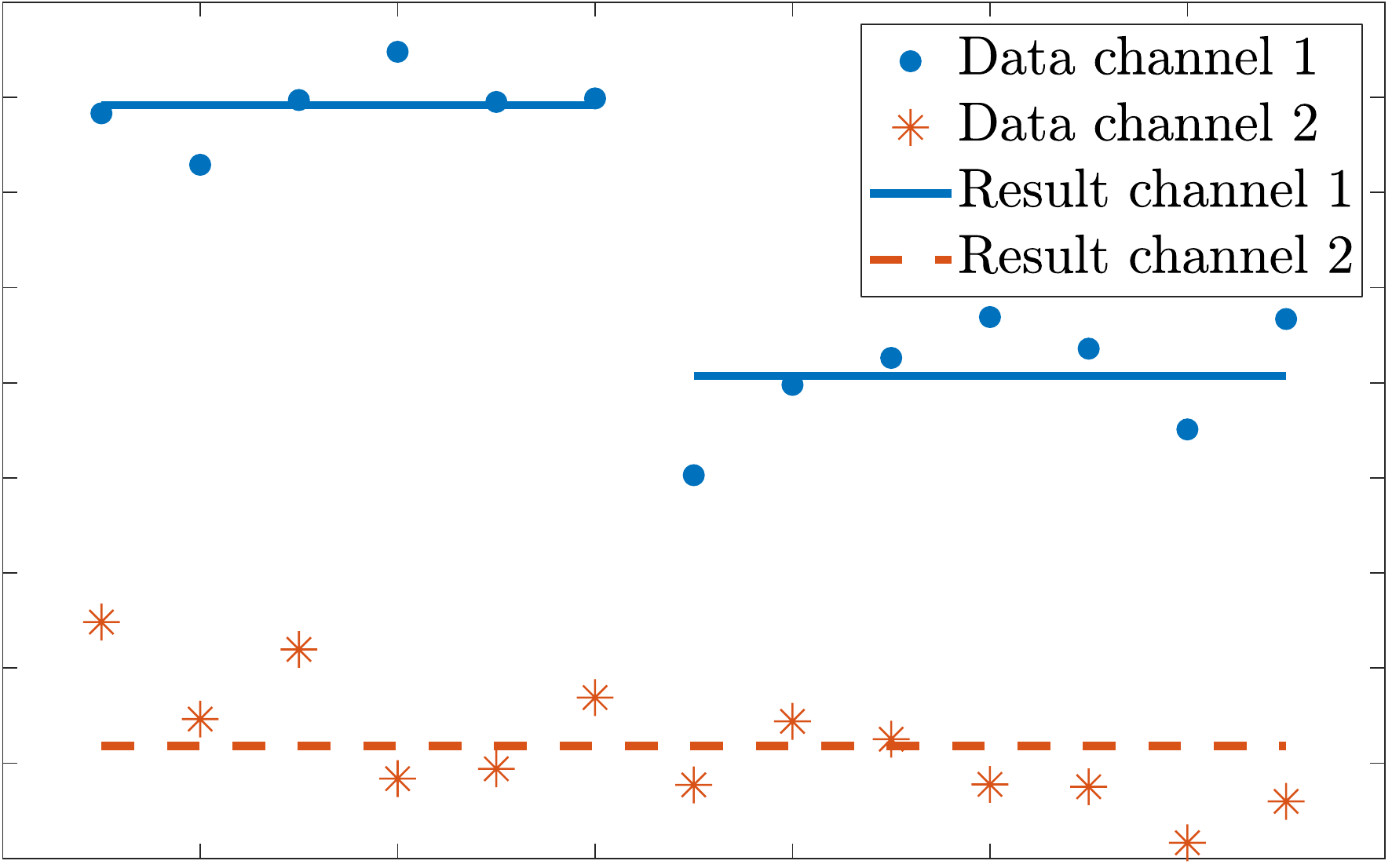}
		\subcaption{Channel-wise Potts prior\label{subfig:forceJumps_channelwise}}
	\end{subfigure}\vs	
	\begin{subfigure}{\figwidth}
		\includegraphics[width=\textwidth]{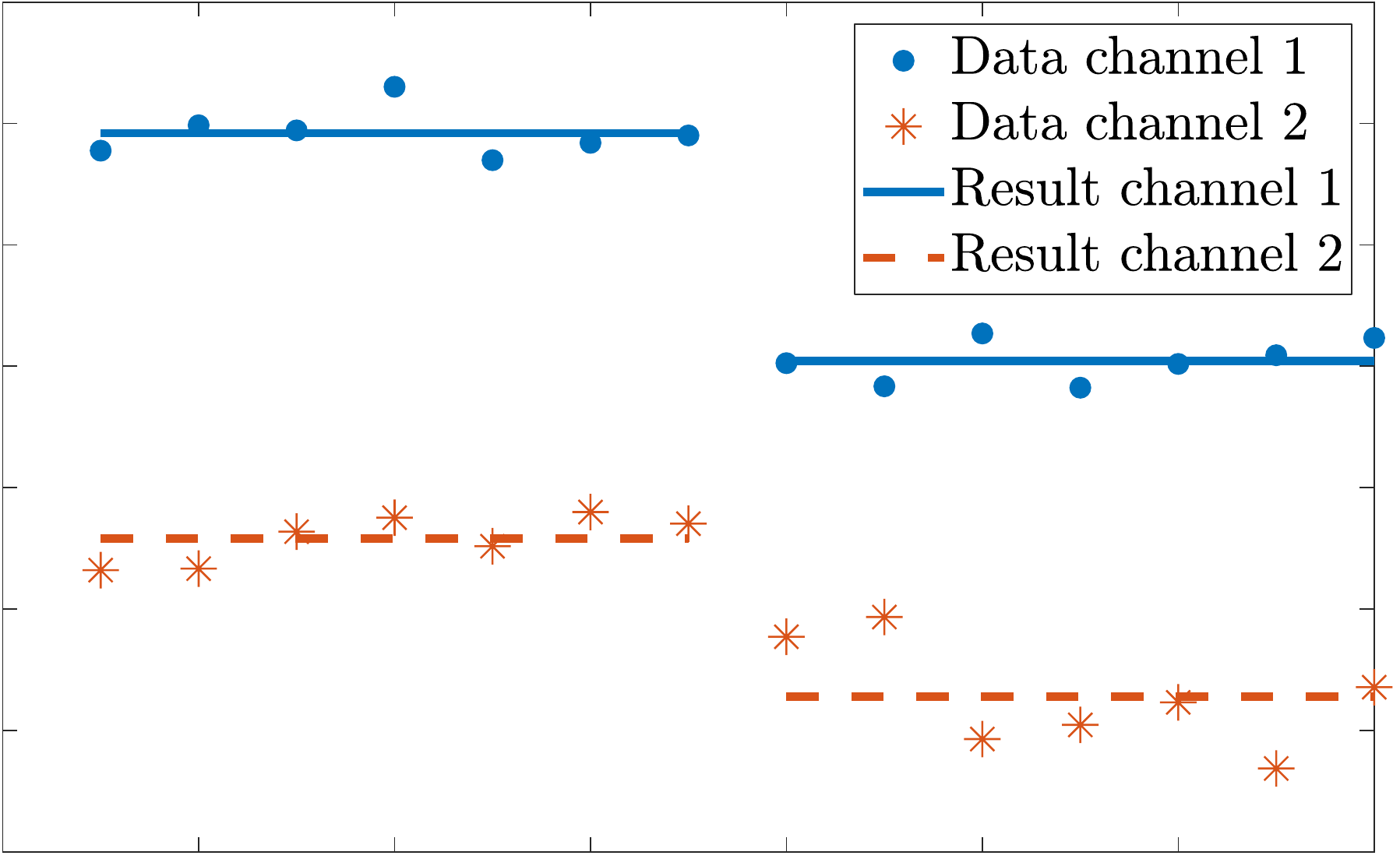}
		\caption{Multi-channel Potts prior\label{subfig:alignJumps_multichannel}}
	\end{subfigure}\hs
	\begin{subfigure}{\figwidth}
		\includegraphics[width=\textwidth]{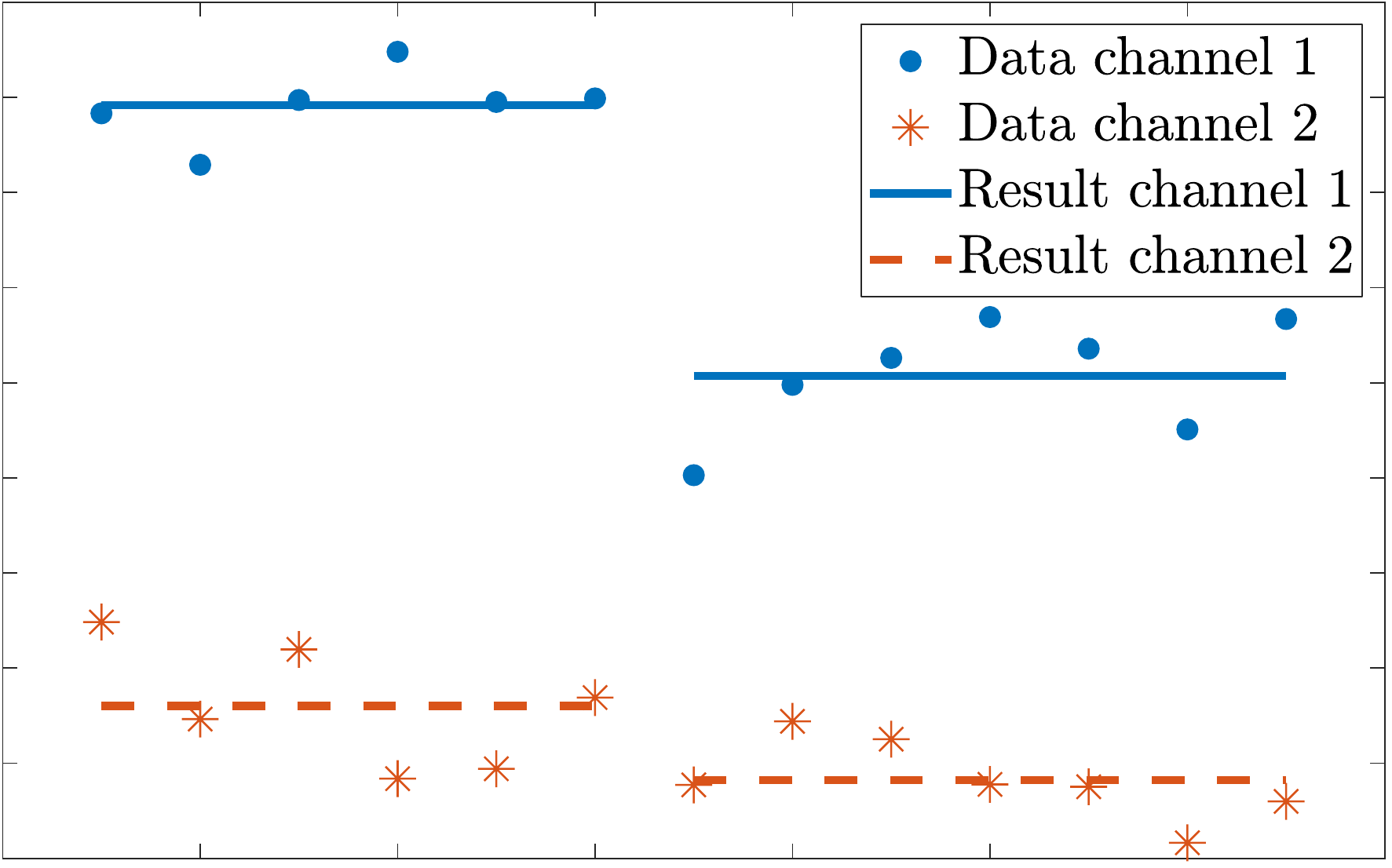}
		\subcaption{Multi-channel Potts prior\label{subfig:forceJumps_multichannel}}
	\end{subfigure}	
	\caption{\label{fig:MultichannelPotts}Channel-coupling with the multi-channel Potts prior.
		\emph{First column}:
		(\subref{subfig:alignJumps_channelwise})
		the channel-wise Potts prior tends to produce jumps at shifted positions 
		in the channels.
		(\subref{subfig:alignJumps_multichannel}) 
		The multi-channel Potts prior enforces the jumps to be spatially aligned across channels
		(called the \emph{jump alignment} property 
		of the multi-channel Potts prior).
		\emph{Second column}:
		(\subref{subfig:forceJumps_channelwise})
		the channel-wise Potts prior may open jumps in a single channel only. 
		(\subref{subfig:forceJumps_multichannel})
		The jumps produced by the multi-channel Potts prior are typically 
		present in all channels. Hence, the multi-channel Potts prior provides \emph{jump consistency}.} 
\end{figure}

In the following two sections, we work out two approaches to the numerical reconstruction of 
piecewise constant multi-channel volume functions by means of the (multi-channel) 
Potts prior.
Firstly, we describe the variational Potts model, which corresponds to 
a minimization problem of the form
\eqref{eq:ReconstructionGeneric} with the Potts prior as the regularizer $\mathcal{R}$.
We approach the Potts model numerically with the ADMM scheme of \cite{storath2015joint}.
Secondly, we derive a new approach based on the Potts prior, which
is \emph{not} of the form \eqref{eq:ReconstructionGeneric}, i.e., it is not
based on an energy minimization model.
Rather, we use the Potts prior to \emph{superiorize} 
an existing iterative solver for (weighted) least squares problems --
the conjugate gradient algorithm (CG).
In particular, the Potts prior is viewed as a target function
which \emph{perturbs} the iterations of CG towards Potts superiorized results.

\section{Potts-Based ADMM}
\label{sec:PottsModel}
The \emph{Potts model}
for the (linear) forward operator $A:\R^{n\times n} \to \R^{m}$ and multi-spectral data $f\in\R^{m\times C}$ 
is given by the following minimization problem
\begin{equation}\label{eq:PottsModelMultichannel}
u^\ast = \argmin_{u:\Omega\to\R^C}
\sum_{c=1}^{C}\mathcal{D}(Au_c\,,f_c)
+\gamma \sum_{s=1}^{S}\omega_s\| \nabla_{d_s} u \|_0.
\end{equation}
Here, $\mathcal{D}$ stands for the PWLS data fidelity term \eqref{eq:DataTerm}.
The nonnegative parameter $\gamma$ balances the data fidelity and the 
Potts prior. It is worth considering the limit cases of $\gamma$: 
for $\gamma \to 0$ the minimizer of \eqref{eq:PottsModelMultichannel} 
has minimal data fidelity and no regularization takes places.
For $\gamma \to \infty$ the minimizer of \eqref{eq:PottsModelMultichannel} will be a constant function
(with minimal data fidelity).
It is well-known that the Potts problem \eqref{eq:PottsModelMultichannel} is non-smooth, non-convex 
and NP-hard \cite{veksler1999efficient,boykov2001fast,weinmann2015iterative}.

\paragraph{Algorithmic approach with the ADMM.}
To obtain good (approximate) solutions of the Potts model
\eqref{eq:PottsModelMultichannel}, 
we follow the approach first presented in \cite{storath2014fast} and extended to inverse problems in
\cite{storath2015joint}, which
is based on the ADMM. 
The ADMM has been successfully applied to non-convex problems 
\cite{xu2016empirical,chartrand2013nonconvex,hohm2015algorithmic,yang2017alternating,wen2012alternating}. 
Contributions from the analytical side are
\cite{wang2019global,hong2016convergence,li2015global,wang2018convergence,wang2014convergence}.
We note that the ADMM performs particularly well in the context of imaging problems 
\cite{goldstein2009split,ng2010solving,steidl2010removing,boyd2011distributed,yang2011alternating}.
We briefly recall the scheme for the reader's convenience and 
provide the necessary adaptations to 
multi-spectral CT.

As a first step, we recast \eqref{eq:PottsModelMultichannel} as a constrained problem.
To this end, we introduce splitting variables 
$u_1,\ldots,u_S$ (each
corresponding to a direction $d_s$ of the discrete gradient)
and the variable $v$ (corresponding to the data term) under the restriction that 
these new variables are equal to not alter the original problem \eqref{eq:PottsModelMultichannel}. 
Hence, we consider the constrained problem
\begin{equation}\label{eq:PottsSplitted}
\begin{split}
&\min_{v,u_1,\ldots,u_S}
\sum_{c=1}^{C}\mathcal{D}(Av_c,f_c) +\sum_{s=1}^{S}\gamma\omega_s\| \nabla_{d_s} u_s \|_0\\
&\text{subject to}\quad  u_s - u_{t} = 0\text{ for all }1\leq s < t \leq S,\\
&\hspace{5.2em} v-u_s=0\text{ for all } s=1,\ldots,S,
\end{split}
\end{equation}
where $v_c$ refers to the $c$-th channel of $v$.
It is a basic but important observation that we did not alter the original problem, i.e.,
\eqref{eq:PottsModelMultichannel} and \eqref{eq:PottsSplitted}
are equivalent.
The ADMM approach requires to form the augmented Lagrangian of \eqref{eq:PottsSplitted}, that is,
\begin{align}
\label{eq:AugmentedLagrangian}
\begin{split}
\mathcal{L}&_{\mu,\rho}\big(v,\{u_s\},\{\lambda_{s,t}\},\{\tau_s\}\big)
= \sum_{c=1}^{C}\mathcal{D}(Av_c,f_c)\\&
+\sum_{s=1}^{S}\bigg\{
\gamma\omega_s \| \nabla_{d_s} u_s \|_0+\tfrac{\rho}{2}\| v-u_s+\tfrac{\tau_s}{\rho}\|^2
-\tfrac{1}{2\rho}\|\tau_{s} \|^2
+\sum_{t=s+1}^{S}\| u_s-u_t+\tfrac{\lambda_{s,t}}{\mu} \|^2
-\tfrac{1}{2\mu}\|\lambda_{s,t}\|^2
\bigg\}.
\end{split}
\end{align}
The constraints in \eqref{eq:PottsSplitted} are now part of the functional in the form
of the corresponding squared deviations weighted by the nonnegative parameters 
$\mu,\rho$. The variables  $\lambda_{s,t},\tau_s \in \R^{n\times n \times C}$ are the corresponding Lagrange 
multipliers.

In each ADMM iteration, the  Lagrangian $\mathcal{L}$ is sequentially 
minimized w.r.t. $v$ and $u_1,\ldots,u_S$
and afterwards a gradient ascent step is performed on the multipliers.
After some algebraic manipulation (see \ref{sec:derivationADMM}),
the minimization of the Lagrangian w.r.t.\,the data variable $v$ reads
\begin{equation}\label{eq:Vstep}
\begin{split}
\argmin_v \mathcal{L}_{\mu,\rho}&=
\argmin_v \sum_{c=1}^{C}\mathcal{D}(Av_c,f_c) + \tfrac{\mu S}{2}
\Big \| v-\tfrac{1}{S}\sum_{s=1}^{S}\Big(u_s - \tfrac{\tau_s}{\rho}\Big)\Big\|^2
\end{split}
\end{equation}
and the minimization of the Lagrangian w.r.t.\, $u_s$ is given by
\begin{equation}\label{eqUstep}
\begin{split}
\argmin_{u_s} \mathcal{L}_{\mu,\rho} &= 
\argmin_{u_s} 
\tfrac{2\gamma\omega_s}{\rho+\mu(S-1)}\|\nabla_{d_s} u_s\|_0 + \Big\| u_s -
\tfrac{\rho v + \tau_s+\sum_{t=s+1}^S (\mu u_t-\lambda_{s,t}) 
	+\sum_{r=1}^{s-1}(\mu u_r + \lambda_{r,s})}
{\rho+\mu(S-1)}\Big\|^2.\\
\end{split}
\end{equation}
We elaborate on efficiently solving  
\eqref{eq:Vstep}-\eqref{eqUstep} in the subsequent paragraph.
As it is common when dealing with non-convex problems, 
we employ monotonically increasing sequences $(\mu_k)_{k\in\N}$, $(\rho_k)_{k\in\N}$ as 
coupling parameters. This allows the splitting variables to develop
rather independently in the first iterations and forces them to become equal in the later
iterations.
We summarize the ADMM scheme for the multi-channel Potts model \eqref{eq:PottsModelMultichannel}
in Algorithm \ref{alg:ADMM}.

\begin{algorithm}[t]
	\def\vs{\\[0.5em]}
	\caption{\label{alg:ADMM}
		Potts ADMM
	}
	\SetCommentSty{footnotesize}
	\PrintSemicolon
	\KwIn{Forward operator $A\in\R^{m\times n^2}$, multi-spectral sinogram $f\in\R^{m\times c}$, PWLS weights $W\in\R^{m\times m\times C}$,
		stopping parameter $\mathtt{tol}>0$, jump penalty $\gamma>0$
	}
	\BlankLine
	\KwOut{$u^\ast\in\R^{n\times n\times C}$
	}
	\BlankLine
	Initialize $v^0,u_1^0,\ldots,u_S^0,\lambda_{s,t}^0,\tau_s^0$ by zero.
	
	$k\leftarrow 0$
	\BlankLine
	\Repeat{$\| u_s^k-u^k_{s+1} \|_\infty < \mathtt{tol}$ ~for all $s=1,\ldots,S-1$ and~
		$\|u_s^k - v^k\|_\infty < \mathtt{tol}$~ for all $s$}
	{
		\tcc{Solve the channel-wise PWLS problems}
		$v^{k+1}=
		\argmin_v \| Av - f\|_{W_c}^2+\tfrac{\mu_k S}{2}
		\Big \| v-\tfrac{1}{S}\sum_{s=1}^{S}\Big(u_s^k - 	\tfrac{\tau_s^k}{\rho_k}\Big)\Big\|^2$
		\vs
		\tcc{Solve linewise Potts problems along the directions $d_s$}
		\For{$s=1,\ldots,S$}{
			$w_s^k =\frac{\rho_k v_k + \tau_s^k+\sum\limits_{t=s+1}^S (\mu_k u_t^k
				-\lambda_{s,t}^k )
				+\sum\limits_{r=1}^{s-1}(\mu_k u_r^{k+1} + \lambda_{r,s}^k)}
			{\rho_k+\mu_k(S-1)}$
			\vs
			$u_s^{k+1} = \argmin_{u_s} 
			\tfrac{2\gamma\omega_s}{\rho_k+\mu(S-1)}\|\nabla_{d_s} u_s\|_0 + \| u_s - w_s^k\|^2$
		}
		
		\tcc{Update the multipliers}
		$\lambda_{s,t}^{k+1} = \lambda_{s,t}^k +\mu_k \big(u_s^{k+1} - u_t^{k+1}\big)$~~for all $s\neq t$\vs
		
		$\tau_s^{k+1} = \tau_s^k +\rho_k \big(v^{k+1} - u_{s}^{k+1}\big)$
		~~for all $s$
		
		\tcc{Increase the coupling parameters}
		$\mu_k\leftarrow \mu_{k+1}$,~$\nu_k\leftarrow\nu_{k+1}$\vs
		
		$k\leftarrow k+1$\vs
	}
	\BlankLine
	\Return{$u^\ast = \frac{1}{S}\sum_{s=1}^{S}u_s^k$}
\end{algorithm}

\paragraph{Solving the subproblems in the ADMM scheme.}
The crucial observation is that the subproblems \eqref{eq:Vstep} and \eqref{eqUstep}
can be solved efficiently.
We start with the first subproblem \eqref{eq:Vstep}. 
For arbitrary data $f,z\in\R^{n\times n \times C}$ and $\mu>0$ it is given by
\begin{equation}\label{eq:genericTikhonov}
v^\ast= \argmin_{v\in\R^{n\times n \times C}} \sum_{c=1}^{C}
\| W^{\frac{1}{2}} A v_c - W^{\frac{1}{2}}f_c\|^2
+\tfrac{\mu}{2}\| v - z\|^2.
\end{equation}
The $\ell_2$-regularized problem \eqref{eq:genericTikhonov} can be solved for each channel $c$
separately and 
the unique minimizer 
$v^\ast$ is channel-wise determined by the weighted normal equations
\begin{equation}
\label{eq:weightedNormalEquations}
\Big(A^T W_c A + \frac{\mu}{2}I \Big)\,v^\ast_c = A^T W_c f_c + \mu z_c.
\end{equation}
(Note that $I$ denotes the identity.)
Thus, solving \eqref{eq:genericTikhonov}
corresponds to solving the linear system \eqref{eq:weightedNormalEquations} 
for each $c=1,\ldots,C$.
We use the conjugate-gradient method to solve 
\eqref{eq:weightedNormalEquations}.

The remaining subproblems \eqref{eqUstep} in $u_1,\ldots,u_S$ have the generic form
\begin{equation}
\label{eq:genericPottsSubproblem}
\argmin_{u_s\in \R^{n\times n\times C}}  \| u_s - f\|^2+\gamma\| \nabla_{d_s} u_s\|_0
\end{equation}
for some data $f\in\R^{n\times n\times C}$ and $\gamma > 0$.
We note that solutions of \eqref{eq:genericPottsSubproblem}
are not necessarily unique. However, the set of data 
for which the solution is non-unique is a zero-set in $\R^n$ \cite{wittich2008complexity}.
Since only one direction $d_s$ of the discretized gradient is involved,
\eqref{eq:genericPottsSubproblem} decomposes into one-dimensional 
(vector-valued) Potts problems
along the paths induced by the direction $d_s$; see Figure \ref{fig:decompositionPottsproblems} 
for an illustration of these paths. 
We obtain a minimizer of \eqref{eq:genericPottsSubproblem} by determining 
the minimizers of each of these
one-dimensional Potts problems separately. This corresponds to minimizing
functionals of the following type
\begin{equation}
\label{eq:Potts1D}
P(u) =\| u -g\|^2+\gamma \|\nabla u \|_0.
\end{equation}
Here, $u,g\in\R^{C\times n}$ and the $\ell_0$-term denotes the number of jumps of $u$, i.e.,
\begin{equation}
\| \nabla u \|_0 = \big|\{ i\in\{1,\ldots,n-1\} : u_c(i) \neq u_c(i+1)\text{ for at least one } c\}\big|.
\end{equation}
Despite being a non-convex problem,
we use the well-known fact that the one-dimensional Potts problem \eqref{eq:Potts1D}
can be solved 
non-iteratively and exactly by dynamic programming 
\cite{bellman1969curve, blake1989comparison,auger1989algorithms,winkler2002smoothers, jackson2005algorithm, friedrich2008complexity} which we briefly describe in the following.
Further details can be found in \cite{friedrich2008complexity,storath2019smoothing}.
Assume that we have computed the minimizers $u^l$ of \eqref{eq:Potts1D}
for reduced data $(g_1,\ldots,g_l)\in\R^{C\times l}$ for each $l=1,\ldots,r$,  $r<n$.
Then the minimum value 
of \eqref{eq:Potts1D} for data $(g_1,\ldots,g_{r+1})$ is given by
\begin{equation}\label{eq:PottsBellman}
P(u^{r+1})=\min_{l=1,\ldots,r+1} P^\ast(u^l)+\gamma+\mathcal{E}^{l:r+1},
\end{equation}
where we define $u^0$ to be the empty vector, let $P(u^0) = -\gamma$ and denote by $\mathcal{E}^{l:r+1}$ 
the sum of the quadratic deviations of $(g_l,\ldots,g_{r+1})$ from its
channel-wise means.
If we denote the minimizing argument of \eqref{eq:PottsBellman} by $l^\ast$, the minimizer
$u^{r+1}$ is given by 
\begin{equation}\label{eq:dynamicUpdate}
u^{r+1} = (u^{l^\ast-1}, \mu_{[l^\ast,r+1]},\ldots,\mu_{[l^\ast,r+1]})
\end{equation}
w.r.t.\,the vector $\mu_{[l^\ast,r+1]}\in\R^C$ of channel-wise means of data $(g_{l^\ast},\ldots g_{r+1})$. Hence, we obtain a minimizer
for full data $g$ by successively computing $u^1,u^2,\ldots$ until we reach $u^n$.
An efficient way to evaluate \eqref{eq:PottsBellman} uses precomputed first and second
moments of data $g$ and stores only the jump locations \cite{friedrich2008complexity}.
Thereby, the described method  has quadratic worst case time complexity in the data length $n$
and  linear worst case time as well as linear space complexity in the number of channels $C$ 
\cite{storath2014fast}.
Additionally, we prune the search space in \eqref{eq:PottsBellman} for speedup as in \cite{storath2014fast}.
Thus, the proposed algorithm can be efficiently applied
to vector-valued data with a large number of channels 
which appear in multi-spectral CT.

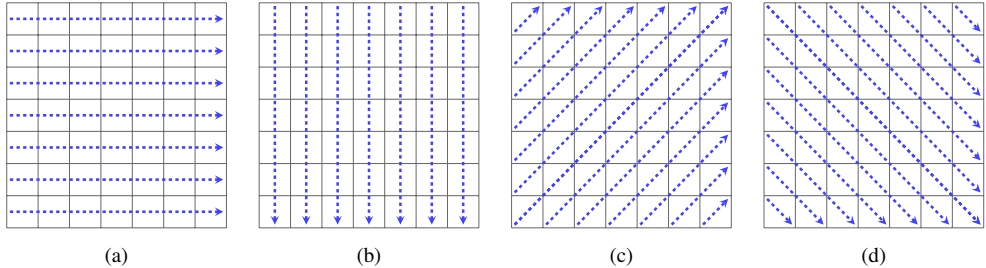
\begin{figure}[!t]
	\newcommand*{\xMin}{0}%
\newcommand*{\yMin}{0}%
\newcommand*{\xMax}{6}%
\newcommand*{\yMax}{6}
\newcommand*{\Step}{1}
\newcommand*{\yOff}{0.1}
\newcommand*{\xOff}{0.1}
\newcommand*{\arrowSize}{0.8mm}
\newcommand*{\lineSize}{0.3mm}
\newcommand*{\emphLineSize}{1.2mm}
\newcommand*{\arrCol}{blue!75}
\newcommand*{\arrColSpecial}{red!75}
\newcommand*{\gridCol}{black!60}
\usetikzlibrary{backgrounds}

\def\decomposingWidth{0.23}
\begin{subfigure}{\decomposingWidth\textwidth}
	\resizebox{\textwidth}{\textwidth}{	
		\begin{tikzpicture}
		\draw[step=\Step,\gridCol,line width=\lineSize] (\xMin,\yMin) grid (\xMax+1,\yMax+1);
		\foreach \i in {\yMin,...,\yMax} {
			\draw[->,\arrCol,>=stealth,dashed,line width=\arrowSize] (\xMin+\xOff,\i+0.5) -- (\xMax-\xOff+1,\i+0.5)  ;
		}
		\end{tikzpicture}
	}
	\subcaption{\label{subfig:1Dhoriz}}
\end{subfigure}
\hfill
\begin{subfigure}{\decomposingWidth\textwidth}
	\resizebox{\textwidth}{\textwidth}{	
		\begin{tikzpicture}
		\draw[step=\Step,\gridCol,line width=\lineSize] (\xMin,\yMin) grid (\xMax+1,\yMax+1);
		\foreach \i in {\xMin,...,\xMax} {
			\draw[->,\arrCol,>=stealth,dashed,line width=\arrowSize] (\i+0.5,\yMax-\yOff+1) -- (\i+0.5,\yMin+\yOff) ;
		}
		\end{tikzpicture}
	}
	\subcaption{\label{subfig:1Dvert}}
\end{subfigure}	
\hfill
\begin{subfigure}{\decomposingWidth\textwidth}
	\resizebox{\textwidth}{\textwidth}{	
		\begin{tikzpicture}
		\draw[step=\Step,\gridCol,line width=\lineSize] (\xMin,\yMin) grid (\xMax+1,\yMax+1);
		\foreach \i in {\yMin,...,\yMax} {
			\draw[->,\arrCol,>=stealth,dashed,line width=\arrowSize]
			(\xMin+\xOff,\i+\yOff) -- (\xMax-\i+1-\xOff,\yMax+1-\yOff);
		}
		\foreach \i in {\xMin,...,\xMax} {
			\draw[->,\arrCol,>=stealth,dashed,line width=\arrowSize]
			(\i+\xOff,\yMin+\yOff) -- (\xMax+1-\xOff,\yMax-\i+1-\yOff);
		}		
		\end{tikzpicture}
	}
	\subcaption{\label{subfig:1Dnortheast}}	
\end{subfigure}
\hfill
\begin{subfigure}{\decomposingWidth\textwidth}
	\resizebox{\textwidth}{\textwidth}{	
		\begin{tikzpicture}
		\draw[step=\Step,\gridCol,line width=\lineSize] (\xMin,\yMin) grid (\xMax+1,\yMax+1);
		\foreach \i in {\yMax,...,\yMin} {
			\draw[->,\arrCol,>=stealth,dashed,line width=\arrowSize]
			(\xMin+\xOff,\i+1-\yOff) -- (\i+1-\xOff,\yMin+\yOff);
		}
		\foreach \i in {\xMin,...,\yMax} {
			\draw[->,\arrCol,>=stealth,dashed,line width=\arrowSize]
			(\i+\xOff,\yMax+1-\yOff) -- (\xMax+1-\xOff,\yMin+\i+\yOff);
		}		
		\end{tikzpicture}
	}
	\subcaption{\label{subfig:1Dsoutheast}}
\end{subfigure}
\caption{
	\label{fig:decompositionPottsproblems}
	Decomposition of 
	problem \eqref{eq:genericPottsSubproblem} into one-dimensional Potts problems along the lines in directions $d_1,\ldots,d_4$ 
	on the discrete image domain 
	$\Omega$ .
}

	
\end{figure}

\section{Potts-Based Superiorization}
\label{sec:PottsSuperiorization}
Superiorization denotes a class of techniques that obtain
more regularized results (w.r.t.\,an appropriately chosen prior) from a basic iterative 
reconstruction method.
SM takes an iterative \emph{basic algorithm} and \emph{perturbs} its iterates employing an exogenous \emph{target function} to obtain a \emph{superior} solution w.r.t.\,this
target function/prior \cite{censor2020superiorization}.
An example of a basic algorithm that is perturbation resilient is the Landweber iteration for the least squares problem. The Landweber iteration can be superiorized 
w.r.t.\,any target function that yields non-ascending directions 
that are bounded and summable.
To fix ideas, if one chooses total variation as the target function
\cite{zibetti2018super,helou2018superiorization}, such non-ascending directions may be efficiently computed based on directional derivatives.
Both energy minimization methods and superiorization
methods typically result in two steps within an algorithmic scheme: 
a forward step followed by a regularizing/perturbation step. However, the interpretation of
these steps are different. For such an interpretation and an overview of superiorization and 
a comparison to (convex) optimization methods, we refer to \cite{censor2020superiorization}.

We propose methods which take a particular
variant of the conjugate gradient method (CG) 
as the basic algorithm and 
the (multi-channel) Potts prior \eqref{eq:PottsModelMultichannel} as the target function 
to obtain Potts superiorized, i.e., piecewise constant, solutions.
As a result, the iterates are not CG iterates but perturbed
versions of them. Thus, the classical result that 
the CG method converges to a minimizer of the (weighted) least squares problem 
after a fixed number of iterations cannot be expected to hold anymore.
However, we will see that the
termination of its perturbed counterpart  at an approximate least squares solution may still be ensured.

\subsection{Perturbing the CG Method with the Block-Wise Potts Prior.}
\label{sec:BasicPottsPerturbing}
To keep things focused, we first consider single-channel measurements and
the simplest anisotropic neighborhood for the Potts prior.
In a subsequent section, we provide the extension to multi-channel measurements and
the more isotropic neighborhood with diagonal directions.

As a starting point, we consider the 
penalized weighted least squares model (PWLS)
corresponding to the forward operator $A$, weights $W$ and (single-channel) data $f$, 
which is, analogously to \eqref{eq:DataTerm}, given by 
\begin{equation}
\label{eq:weightedLS}
u^\ast \in \argmin_u \frac{1}{2}
\big\| Au - f\big\|^2_W =\argmin_u \frac{1}{2}\big\| W^{\frac{1}{2}}Au - W^{\frac{1}{2}}f\big\|^2.
\end{equation}
As the next step, we duplicate the objective variable
and incorporate the difference between the two variables weighted by
a parameter $\mu> 0$. 
This will allow us to apply the CG method to an overdetermined problem and 
will serve as the basis for using a block-wise (more accessible) version of the Potts prior.
The modified problem is given by 
\begin{equation}
\label{eq:weightedLSsplit}
(u_1^\ast,u_2^\ast)\in\argmin_{u_1,u_2} \frac{1}{2}
\Bigg\|
\begin{pmatrix}
W^{\frac{1}{2}}A & 0 \\
0 & W^{\frac{1}{2}}A\\
\mu I & -\mu I
\end{pmatrix}
\begin{pmatrix}
u_1 \\ u_2
\end{pmatrix}
- 
\begin{pmatrix}
W^{\frac{1}{2}}f \\ W^{\frac{1}{2}}f \\0
\end{pmatrix}
\Bigg\|^2,
\end{equation}
where $I$ denotes the identity.
This formulation is inspired by the splitting 
used in our work \cite{kiefer2018iterative}. 
Below we will see that this splitting is necessary since otherwise the Potts perturbation steps would
correspond to solving NP-hard problems.
Please also note that due to the splitting the least squares system \eqref{eq:weightedLSsplit} is
overdetermined (even when matrix $A$ is underdetermined) and thus 
can be solved using the CG method adapted to the least squares problem.
The problems \eqref{eq:weightedLS} and 
\eqref{eq:weightedLSsplit} are equivalent in the following sense.
A minimizer $u^\ast$ of \eqref{eq:weightedLS} yields via 
$(u^\ast,u^\ast)$ a minimizer of \eqref{eq:weightedLSsplit}. Conversely,
a minimizer $(u^\ast_1,u^\ast_2)$ of \eqref{eq:weightedLSsplit} satisfies
$u_1^\ast = u_2^\ast$ and $u_1^\ast$ is a minimizer of \eqref{eq:weightedLS}.
We prove this 
for a more general situation in Proposition \ref{prop:WeightedLSBlockwiseEquality} below.
Consequently, we did not alter the original PWLS problem \eqref{eq:weightedLS}.

In contrast to the Landweber method --applying the gradient descent method
on the least squares objective-- the step sizes
of the CG method do not depend on the operator norm of
$A$ which is particularly advantageous for CT problems,
where the operator norm of $A$ 
is typically large and the Landweber method becomes unfavorable. 

In \cite{zibetti2018super}, the authors studied
several variants of the CG method
and identified a particular one which is \emph{strongly perturbation resilient}. 
That is, its termination is still ensured when its iterates are perturbed by a sequence of (summable) perturbations.
In the following, we briefly recall these relations.
The pseudocode of a single step of this variant of the CG method 
is given in Algorithm \ref{alg:CGstep} in the appendix.

\paragraph{Strong perturbation resilience.}
Strong perturbation resilience is a concept used in
the theory of superiorization. 
It ensures that an iterative algorithm still terminates
when its iterates are (appropriately) perturbed.
To give a precise definition of 
strong perturbation resilience, we need some preparations.

As a first step, we denote by $\mathcal{S}_\mathcal{T}$ 
the solution set of a given problem $\mathcal{T}$. 
Furthermore, we let $\mathcal{A}$ be an algorithmic operator which generates
sequences $\mathcal{X}_\mathcal{A}=(u^k)_{k\in\N_0}$ of the form
\begin{equation}\label{eq:basicAlgorithmSequence}
u^{k+1} = \mathcal{A}(u^k) \quad\text{ for all } k \geq 0
\end{equation}
that converge to points in 
$\mathcal{S}_\mathcal{T}$ for any initial $u^0$.
For our purposes, $\mathcal{T}$ will correspond to 
the PWLS \eqref{eq:weightedLSsplit} and 
$\mathcal{S}_\mathcal{T}$ to the set of its minimizers.
\begin{definition}[Proximity function, $\varepsilon$-compatibility, proximity sets]
	\label{def:proximity}
	Given a problem $\mathcal{T}$, a \emph{proximity function} is a function
	$\mathcal{P}r_\mathcal{T}(x):\R^n\to [0,\infty)$ that measures how incompatible
	$x$ is with $\mathcal{T}$. For any $\varepsilon > 0$ and any proximity function,
	we define that
	$x$ is \emph{$\varepsilon$-compatible} with $\mathcal{T}$ if 
	$\mathcal{P}r_\mathcal{T}(x)\leq \varepsilon$. We call the sets
	\begin{equation}\label{eq:proximitySet}
	\Gamma_\varepsilon = \big\{x\in\R^n: \mathcal{P}r_\mathcal{T}(x)\leq \varepsilon\big\}
	\end{equation}
	\emph{proximity sets}.
\end{definition}
For weighted least squares problems, such as \eqref{eq:weightedLSsplit},
the proximity function is typically given by the residual sum of squares
and
$\varepsilon$ is chosen according to the assumed level of noise.
Next, we define the notion of an
$\varepsilon$-output of a sequence w.r.t.\,a proximity set.
\begin{definition}[$\varepsilon$-output of a sequence with respect to $\Gamma_\varepsilon$]
	For some $\varepsilon>0$, a nonempty proximity set $\Gamma_\varepsilon$
	and a sequence $\mathcal{X}=(u^k)$ of points, the \emph{$\varepsilon$-output}
	$O(\Gamma_\varepsilon,\mathcal{X})$ of $\mathcal{X}$ w.r.t.\,$\Gamma_\varepsilon$
	is defined to be the element $u^k$ with smallest $k\in\N$ such that 
	$u^k\in\Gamma_\varepsilon$.
\end{definition}
We can now specify 
strong perturbation resilience of an algorithmic operator.
\begin{definition}[Strong perturbation resilience]
	\label{def:perturbationResilience}
	Let $\Gamma_\varepsilon$ denote the proximity set and 
	$\mathcal{A}$ be an algorithmic operator.
	The algorithmic scheme \eqref{eq:basicAlgorithmSequence}  is 
	\emph{strongly perturbation resilient} if 
	\begin{enumerate}[label=(\roman*)]
		\item there is an $\varepsilon>0$ such that the $\varepsilon$-output
		$O(\Gamma_\varepsilon,\mathcal{X}_\mathcal{A})$ of the sequence 
		$\mathcal{X}_\mathcal{A}$ exists for every initial $u^0$;
		\item for all $\varepsilon\geq 0$ such that
		$O(\Gamma_\varepsilon,\mathcal{X}_\mathcal{A})$ is defined for every 
		$u^0$, we also have that
		$O(\Gamma_{\varepsilon'},\mathcal{Y}_\mathcal{A})$ is defined for
		every $\varepsilon'\geq \varepsilon$ and for every sequence 
		$\mathcal{Y}_\mathcal{A}=(\bar{u}^k)$ generated by 
		\begin{equation}\label{eq:basicSuperiorization}
		\bar{u}^{k+1} = \mathcal{A}(\bar{u}^k+\beta_kv^k),\quad\text{ for all } k\geq 0,
		\end{equation}
		where the terms $\beta_kv^k$ are bounded perturbations, i.e.,
		$(v^k)_{k\in \N}$ is bounded, $\beta_k\geq 0$ for all $k$ and 
		$\sum_{k=0}^{\infty}\beta_k < \infty$.
	\end{enumerate}
\end{definition}
We record  the strong perturbation resilience of 
the presented variant of the CG method, which has been proven
in \cite[Theorem A.1]{zibetti2018super}.
\begin{theorem}\label{thm:perturbationResilienceCG}
The CG method as recorded in Algorithm \ref{alg:CGstep} is strongly perturbation resilient.
\end{theorem}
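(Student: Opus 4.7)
The plan is to verify the two conditions of \Cref{def:perturbationResilience} separately, following the strategy of \cite{zibetti2018super}. The proximity function is the natural residual
\[
\mathcal{P}r_\mathcal{T}(u) = \big\|\widetilde{A}u-\widetilde{f}\big\|^2,
\]
where $\widetilde{A},\widetilde{f}$ denote the stacked system in \eqref{eq:weightedLSsplit}. Condition (i) follows from the classical convergence behavior of CG applied to the (always consistent up to the residual) least squares system: the iterates generated by \Cref{alg:CGstep} decrease the residual $\|\widetilde{A}u^k-\widetilde{f}\|^2$ monotonically and converge to $\min_u\|\widetilde{A}u-\widetilde{f}\|^2$. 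Hence for any $\varepsilon$ strictly above this minimum, $\mathcal{X}_\mathcal{A}$ enters $\Gamma_\varepsilon$ after finitely many iterations, so the $\varepsilon$-output is well defined independently of the initialization $u^0$.

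For condition (ii) the plan is to control how much a single bounded perturbation $\beta_k v^k$ can increase the proximity value, and then exploit summability of $(\beta_k)$. The first step is to show that the iterates of the perturbed scheme \eqref{eq:basicSuperiorization} stay in some bounded set $B$: this is done by observing that the specific CG variant in \Cref{alg:CGstep} is designed so that its search-direction magnitude is controlled by the current residual (e.g., through the reset rule), and then arguing inductively that a perturbation by $\beta_kv^k$ with $(v^k)$ bounded and $\beta_k$ summable cannot push the residual arbitrarily far. Once boundedness is in hand, local Lipschitz continuity of $\mathcal{P}r_\mathcal{T}$ on $B$ yields
\[
\big|\mathcal{P}r_\mathcal{T}(\bar u^k+\beta_kv^k)-\mathcal{P}r_\mathcal{T}(\bar u^k)\big|\le L\beta_k\|v^k\|\le L M\beta_k
\]
for some constants $L,M$. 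Combining this per-step bound with the monotone decrease of $\mathcal{P}r_\mathcal{T}$ along unperturbed CG steps gives
\[
\mathcal{P}r_\mathcal{T}(\bar u^{k+1})\le \mathcal{P}r_\mathcal{T}(\bar u^k)-\delta_k+LM\beta_k,
\]
where $\delta_k\ge 0$ is the unperturbed residual decrease. Since $\sum_k\beta_k<\infty$, this recursion forces $\mathcal{P}r_\mathcal{T}(\bar u^k)$ down to the unperturbed limit, so $\mathcal{Y}_\mathcal{A}$ eventually enters $\Gamma_{\varepsilon'}$ for any $\varepsilon'\ge \varepsilon$.

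The main obstacle I expect is the a priori boundedness step: ensuring that after perturbation the CG search directions and step sizes do not blow up, so that the Lipschitz estimate on $\mathcal{P}r_\mathcal{T}$ can be applied uniformly across iterations. This is precisely the reason for choosing the specific CG variant of \Cref{alg:CGstep} (rather than a generic nonlinear CG), whose reset/safeguarding mechanism yields the boundedness of iterates and of the generated directions even under bounded summable perturbations, making \cite[Theorem~A.1]{zibetti2018super} directly applicable in our setting.
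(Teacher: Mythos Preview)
The paper does not supply a proof of this theorem at all: it simply records the statement and defers entirely to \cite[Theorem~A.1]{zibetti2018super}. Your proposal therefore already goes beyond what the paper does, by sketching the argument behind that cited result rather than merely invoking it.

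As a sketch your outline is in the right spirit, and you correctly identify the a~priori boundedness of the perturbed iterates as the delicate point. One step, however, is looser than you suggest. From the recursion
\[
\mathcal{P}r_\mathcal{T}(\bar u^{k+1})\le \mathcal{P}r_\mathcal{T}(\bar u^k)-\delta_k+LM\beta_k
\]
with $\delta_k\ge 0$ and $\sum_k\beta_k<\infty$ you only obtain $\sum_k\delta_k<\infty$, hence $\delta_k\to 0$; this alone does not force $\mathcal{P}r_\mathcal{T}(\bar u^k)$ down to the minimal value. You still need a quantitative link between the one-step CG decrease $\delta_k$ and the distance of the current point from the solution set---for the quadratic least-squares objective this is available (e.g.\ via the exact line search in \Cref{alg:CGstep}, which ties $\delta_k$ to the gradient norm, together with the quadratic growth of $\mathcal{P}r_\mathcal{T}$ on the range of $\widetilde A$), but it should be named explicitly rather than absorbed into ``forces \dots\ down to the unperturbed limit''. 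With that ingredient added your sketch is a genuine proof; without it the recursion argument is incomplete.
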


\paragraph{Perturbations by steps towards the proximal mapping of the
	block-wise Potts prior.}
Towards Potts superiorized solutions, we perturb
the iterates of Algorithm \ref{alg:CGstep} by means of
the Potts prior. 
The basic scheme is summarized in Algorithm \ref{proc:Superiorization}.
\begin{algorithm}[t]
	\caption{\label{proc:Superiorization}
		Basic Potts superiorized CG
	}
	\SetCommentSty{footnotesize}
	\PrintSemicolon
		Choose an initial perturbation parameter $\beta_0>0$ and an annealing
		parameter $0<a<1$, set $k\leftarrow 0$. Iterate until stopping criterion:
		
		Perform a CG step on $(u_1,u_2)^k$ 
		w.r.t.\,the least squares problem 
		\eqref{eq:weightedLSsplit}
		and 
		obtain $(u_1,u_2)^{k+1/2}$.
		
		Perturb the iterate 
		proportionally to $\beta_k$ with the Potts prior, i.e., 
		$(u_1,u_2)^{k+1}\leftarrow\mathcal{P}_{\beta_k}(u_1,u_2)^{k+1/2}$
		
		Update 
		$\beta_{k+1} \leftarrow a \beta_k$, $k\leftarrow k+1$. Go to 2.
\end{algorithm}
Typically, in the context of superiorization, the perturbation $\mathcal{P}_{\beta}$
in step 3
corresponds to
adding a \emph{non-ascending direction} w.r.t.\,the target function 
to the current iterate with appropriate step-sizes as in \Cref{def:perturbationResilience}.
Thereby, the target function values of the iterates are decreased 
so that the final result should become more desirable in terms
of the target function.
For a smooth target function, a non-ascending direction may be obtained
from its negative gradient.
Decreasing the parameter $\beta$ in step 4 ensures that the perturbations 
--denoted by $\mathcal{P}_{\beta_k}$ in step 3-- become smaller in the course of the iterations and the sequence is summable, i.e.,
$\sum_{k=0}^{\infty}\beta_k < \infty$.

As the Potts prior is non-smooth, we resort to
the evaluation of proximal mappings -- a common practice in non-smooth optimization.
To this end, we define for $(u_1,u_2)$ the \emph{block-wise Potts prior}
\begin{equation}
\label{eq:BlockwisePottsPriorAniso}
F(u_1,u_2) = \| \nabla_1 u_1 \|_0 + \| \nabla_2 u_2 \|_0.
\end{equation}
The block-wise Potts prior \eqref{eq:BlockwisePottsPriorAniso} counts the number of 
row-wise jumps in $u_1$ and the number of column-wise jumps in $u_2$.
The proximal mapping of $F$ for $\beta > 0$ 
is given by
\begin{equation}
\label{eq:BlockwisePottsProxAniso}
\prox_{\beta F}(u_1,u_2) =\argmin_{w_1,w_2}
F(w_1,w_2) + \tfrac{1}{2\beta} \big\| (u_1,u_2) - (w_1,w_2) \big\|^2.
\end{equation}
We recall that the right-hand side of \eqref{eq:BlockwisePottsProxAniso}
is unique for almost all $u_1,u_2$, \cite{wittich2008complexity}, so that
using the equality in \eqref{eq:BlockwisePottsProxAniso} is reasonable.
The proximal mapping \eqref{eq:BlockwisePottsProxAniso}
can be evaluated efficiently.
First, we note that the minimization in \eqref{eq:BlockwisePottsProxAniso} 
can be performed block-wise for $w_1$ and $w_2$, respectively.
These smaller problems are instances of \eqref{eq:genericPottsSubproblem} 
for $\gamma = 2\beta$,
which can be solved by dynamic programming as described 
in Section \ref{sec:PottsModel}. Hence, \eqref{eq:BlockwisePottsProxAniso} can be efficiently
evaluated.
In this context, we note
that omitting the introduction of $u_1,u_2$ in \eqref{eq:weightedLSsplit}
would lead to a proximal mapping in a single variable only whose evaluation 
corresponds to solving a two-dimensional Potts problem which is NP-hard.

It is well-known that
the proximal mapping corresponding to a convex lower semicontinuous function 
can be written in additive form \eqref{eq:basicSuperiorization} using the generalized gradient 
\cite{censor2020superiorization}. This however, is not feasible for
the block-wise Potts prior as it is not convex.
\Cref{prop:nonascending} will ensure that 
we still can use \eqref{eq:BlockwisePottsProxAniso} to obtain non-ascending 
directions for the block-wise Potts prior. To this end, we first note the following lemma
whose proof can be found in the appendix.
\begin{lemma}\label{lemma:neighboringEqualityPotts}
	Let $J_1(w)$ denote the positions of the horizontal jumps of $w$ and 
	$J_2(w)$ the positions of vertical jumps of $w$, i.e., 
	$\{x,x+d_s\}\in J_s(w)$ if and only if $w(x)\neq w(x+d_s)$, $s=1,2$.
	Then we have the inclusions $J_1(\bar{u}_1)\subset J_1(u_1)$ and $J_2(\bar{u}_2)\subset J_2(u_2)$
	for $(\bar{u}_1,\bar{u}_2) = \prox_{\beta F}(u_1,u_2)$.
	In other words, the proximal mapping of the block-wise Potts prior does not introduce
	jumps which were not already present in its arguments.
\end{lemma}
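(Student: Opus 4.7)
The plan is to reduce the two-dimensional statement to a one-dimensional claim about the scalar Potts problem and then prove the latter by a boundary-shift perturbation argument. First I would exploit the separability of the proximal subproblem~\eqref{eq:BlockwisePottsProxAniso}: since $\|\nabla_1 w_1\|_0$ depends only on $w_1$ and $\|\nabla_2 w_2\|_0$ only on $w_2$, the minimization splits into two independent problems. Because horizontal finite differences couple only within a single row, the $w_1$-problem further decomposes into independent one-dimensional Potts problems along rows; an analogous column-wise decomposition handles $w_2$. Hence it suffices to prove the scalar statement: if $\bar{w}\in\R^n$ minimizes $\|w-g\|^2+\gamma\|\nabla w\|_0$ and $g(i)=g(i+1)$ for some $i$, then $\bar{w}(i)=\bar{w}(i+1)$; from this the inclusions $J_1(\bar{u}_1)\subset J_1(u_1)$ and $J_2(\bar{u}_2)\subset J_2(u_2)$ follow row-wise and column-wise.

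For the scalar claim I would argue by contradiction. By the dynamic programming characterization recalled in \Cref{sec:PottsModel}, a one-dimensional Potts minimizer is piecewise constant and takes on each maximal constant segment the mean of $g$ over that segment; moreover, adjacent segment means are necessarily distinct, since otherwise merging two consecutive segments would strictly decrease the jump penalty without affecting the data term. Assume now $g(i)=g(i+1)=c$ but $\bar{w}(i)\neq\bar{w}(i+1)$. Then $i$ is the last index of a segment $I_k$ of length $n_k$ and mean $\mu_k$, while $i+1$ is the first index of the adjacent segment $I_{k+1}$ of length $n_{k+1}$ and mean $\mu_{k+1}\neq\mu_k$. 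I would compare $\bar{w}$ against the two candidate partitions obtained by moving the boundary one step to the left (when $n_k\geq 2$) or one step to the right (when $n_{k+1}\geq 2$), with segment values updated to the new means. Each shift keeps the jump count no larger than before, so optimality of $\bar{w}$ forces the induced change in the data term to be nonnegative. A routine computation exploiting $g(i)=g(i+1)=c$ gives $\Delta_{\mathrm{left}}=\tfrac{n_{k+1}(\mu_{k+1}-c)^2}{n_{k+1}+1}-\tfrac{n_k(\mu_k-c)^2}{n_k-1}$ and $\Delta_{\mathrm{right}}=\tfrac{n_k(\mu_k-c)^2}{n_k+1}-\tfrac{n_{k+1}(\mu_{k+1}-c)^2}{n_{k+1}-1}$; multiplying the two inequalities $\Delta_{\mathrm{left}},\Delta_{\mathrm{right}}\geq 0$ produces $(n_k-1)(n_{k+1}-1)\geq(n_k+1)(n_{k+1}+1)$ unless $(\mu_k-c)(\mu_{k+1}-c)=0$. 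The integer inequality never holds for positive $n_k,n_{k+1}$, so one of the means must equal $c$; both cannot, lest $\mu_k=\mu_{k+1}$, and if, say, $\mu_k=c$ while $\mu_{k+1}\neq c$, then $\Delta_{\mathrm{right}}<0$, contradicting optimality.

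The main obstacle in writing this up carefully is the bookkeeping for degenerate segments of length one, where a particular boundary shift is unavailable. These cases dissolve through the observation that a singleton segment $I_k=\{i\}$ forces $\mu_k=g(i)=c$; that either contradicts the assumption $\mu_k\neq c$ or reduces to the symmetric case resolved by the complementary shift, so the argument goes through uniformly.
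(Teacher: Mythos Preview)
Your reduction to the one-dimensional problem is exactly what the paper does: separability of \eqref{eq:BlockwisePottsProxAniso} in $w_1,w_2$ and the row/column decomposition into univariate Potts problems. The difference lies in how the one-dimensional claim is established. The paper simply invokes \cite[Lemma~4.2]{winkler2002smoothers}, whereas you supply a self-contained boundary-shift argument. Your computation is correct: the change in the data term when transferring an element $c$ to or from a segment of length $n$ and mean $\mu$ is indeed $\pm\tfrac{n(c-\mu)^2}{n\pm 1}$, which yields your expressions for $\Delta_{\mathrm{left}}$ and $\Delta_{\mathrm{right}}$; the multiplicative contradiction for $a,b>0$ and the direct contradiction via one of the shifts when $a$ or $b$ vanishes both go through. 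The singleton cases are handled exactly as you indicate, since $n_k=1$ forces $\mu_k=g(i)=c$ and then $n_{k+1}=1$ is excluded by $\mu_k\neq\mu_{k+1}$, so the needed complementary shift is always available. In short: the paper's proof is shorter by delegating the 1D step to the literature; yours is longer but independent of the external reference, and both are valid.
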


\begin{lemma}\label{prop:nonascending}
	For any $u=(u_1,u_2)$ and $\beta \geq 0$, the vector given by 
	\begin{equation}\label{eq:nonascendingDirectionBlockPotts}
	v=(v_1,v_2):=\begin{cases}
	\frac{\prox_{\beta F}(u_1,u_2)-(u_1,u_2)}{\delta} \quad &\text{ if }\prox_{\beta F}(u_1,u_2)\neq (u_1,u_2), \\
	0 &\text{ otherwise, }
	\end{cases}
	\end{equation}
	where $\delta=\|\prox_{\beta F}(u_1,u_2)-(u_1,u_2)\|$,
	satisfies  $\| v\| \leq 1$ and $F\big(u + t\cdot v\big) \leq F(u)$ for all $ t\geq 0$
	w.r.t.\,to the
	block-wise Potts prior $F$.
	Thus, \eqref{eq:nonascendingDirectionBlockPotts} yields a non-ascending direction for
	the block-wise Potts prior at $u=(u_1,u_2)$.
\end{lemma}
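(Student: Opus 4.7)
The bound $\|v\|\le 1$ is immediate from the definition: if $\prox_{\beta F}(u_1,u_2)\neq(u_1,u_2)$ then $\|v\|=1$ by the choice of normalising constant $\delta$, and otherwise $v=0$. Similarly, in the case $v=0$ the non-ascent statement $F(u+tv)\le F(u)$ is trivial, so I would assume from now on that $(\bar u_1,\bar u_2):=\prox_{\beta F}(u_1,u_2)\neq (u_1,u_2)$ and set $\lambda:=t/\delta \ge 0$, so that
\begin{equation*}
u+tv=(1-\lambda)(u_1,u_2)+\lambda(\bar u_1,\bar u_2).
\end{equation*}
Since $F$ splits as $F(w_1,w_2)=\|\nabla_1 w_1\|_0+\|\nabla_2 w_2\|_0$, it suffices to prove separately that
\begin{equation*}
\|\nabla_1((1-\lambda)u_1+\lambda\bar u_1)\|_0\le\|\nabla_1 u_1\|_0
\quad\text{and}\quad
\|\nabla_2((1-\lambda)u_2+\lambda\bar u_2)\|_0\le\|\nabla_2 u_2\|_0.
\end{equation*}

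The key step is to show, for $s=1,2$, the jump-set inclusion $J_s((1-\lambda)u_s+\lambda\bar u_s)\subset J_s(u_s)$. I would argue by contraposition: fix a neighbouring pair $\{x,x+d_s\}\notin J_s(u_s)$, i.e.\ $u_s(x)=u_s(x+d_s)$. By Lemma \ref{lemma:neighboringEqualityPotts} we have $J_s(\bar u_s)\subset J_s(u_s)$, so $\{x,x+d_s\}\notin J_s(\bar u_s)$ as well, i.e.\ $\bar u_s(x)=\bar u_s(x+d_s)$. Taking the same affine combination of both identities yields
\begin{equation*}
(1-\lambda)u_s(x)+\lambda\bar u_s(x)=(1-\lambda)u_s(x+d_s)+\lambda\bar u_s(x+d_s),
\end{equation*}
so $\{x,x+d_s\}\notin J_s((1-\lambda)u_s+\lambda\bar u_s)$. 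This proves the inclusion, whence $\|\nabla_s((1-\lambda)u_s+\lambda\bar u_s)\|_0\le\|\nabla_s u_s\|_0$. Summing $s=1,2$ gives $F(u+tv)\le F(u)$, as required.

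The only place one must be careful is that the claim must hold for \emph{all} $t\ge 0$, not just $t\in[0,\delta]$ (which would correspond to $\lambda\in[0,1]$, where one is literally on the segment between $u$ and $\prox_{\beta F}(u)$). Fortunately the argument above makes no use of $\lambda\le 1$: whenever both endpoints of an edge carry equal values for both $u_s$ and $\bar u_s$, every affine combination preserves that equality. Hence the lemma truly holds on the entire ray $t\ge 0$, and this is the subtle point I would highlight, since for smooth or convex targets the non-ascent property typically holds only locally along the descent direction.
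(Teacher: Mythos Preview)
Your proof is correct and follows essentially the same approach as the paper: both arguments rest on Lemma~\ref{lemma:neighboringEqualityPotts} to conclude that wherever $u_s$ has no jump, neither does $\bar u_s$, and hence neither does any affine combination $(1-\lambda)u_s+\lambda\bar u_s$ (equivalently, the paper phrases this as $v_s$ being constant on the constant pieces of $u_s$). Your write-up is more detailed and makes the ``all $t\ge 0$'' point explicit, which the paper leaves implicit.
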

\noindent The proof of \Cref{prop:nonascending} can be found in the appendix.

A perturbation strategy for line 3 of Algorithm \ref{proc:Superiorization}
in terms of adding non-ascending directions is now given by 
\begin{equation}\label{eq:perturbNonAscending}
\begin{split}
\mathcal{P}_{\beta_k}\Big((u_1,u_2)^{k+1}\Big)&=
\begin{cases}
(u_1,u_2)^{k+1/2} + \beta_k
  \tfrac{\prox_{{\beta_k} F}\big((u_1,u_2)^{k+1/2}\big) -
	(u_1,u_2)^{k+1/2}}{\delta} &\text{if }\delta > 0,\\
(u_1,u_2)^{k+1/2}&\text{otherwise }
\end{cases}\\
&=:(u_1,u_2)^{k+1/2}+\beta_k\cdot\big(v_1,v_2\big)^{k},
\end{split}
\end{equation}
where $\delta=\|\prox_{\beta_k F}(u_1,u_2)^{k+1/2}-(u_1,u_2)^{k+1/2}\|$.
The summability of the parameters
$\beta_k$ by line 4 of Algorithm \ref{proc:Superiorization}
ensures that \eqref{eq:perturbNonAscending} produces
a sequence of bounded perturbations
as defined in \Cref{def:perturbationResilience}.
In particular, the sequence $\beta_k$ of annealing parameters 
satisfies $\sum_k \beta_k < \infty$ and the 
sequence of additive perturbations $v^k=(v_1,v_2)^{k}$ is bounded as 
$\|v^k\|\leq 1$.

We summarize the above considerations in the following theorem.
\begin{theorem}\label{prop:convergenceBasicScheme}
	Algorithm \ref{proc:Superiorization} with the 
	perturbations given by  \eqref{eq:perturbNonAscending}, i.e.,
	adding non-ascending directions w.r.t.\,the block-wise Potts prior,
	and the stopping criterion
	\begin{equation}\label{eq:SuperiorizationStopCrit}
	\| W^\frac{1}{2}Au_1 -W^\frac{1}{2}f\|^2
	+\| W^\frac{1}{2}Au_2 -W^\frac{1}{2}f\|^2
	+\| \mu( u_1 -u_2)\|^2 < \varepsilon
	\end{equation} 
	terminates for every $\varepsilon>\varepsilon_0$ and initializations
	$u_1,u_2$, where
	$\varepsilon_0 $ is the minimal value of the 
	underlying (weighted) least squares problem
	\eqref{eq:weightedLS} or, equivalently, 
	\eqref{eq:weightedLSsplit}. 
\end{theorem}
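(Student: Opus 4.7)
The plan is to reduce the claim to the strong perturbation resilience of the CG method stated in Theorem~\ref{thm:perturbationResilienceCG}. Concretely, I would read the left-hand side of the stopping criterion~\eqref{eq:SuperiorizationStopCrit} as twice the PWLS objective of~\eqref{eq:weightedLSsplit} evaluated at $(u_1,u_2)$, and therefore identify it with a proximity function $\mathcal{P}r_\mathcal{T}$ in the sense of Definition~\ref{def:proximity} for the problem $\mathcal{T}$ given by~\eqref{eq:weightedLSsplit}. The associated proximity sets $\Gamma_\varepsilon$ are then exactly the sublevel sets appearing in~\eqref{eq:SuperiorizationStopCrit}. By the equivalence of~\eqref{eq:weightedLS} and~\eqref{eq:weightedLSsplit} (to be shown in Proposition~\ref{prop:WeightedLSBlockwiseEquality}), the minimum value of this proximity function coincides with $\varepsilon_0$, so $\Gamma_\varepsilon \neq \emptyset$ for every $\varepsilon > \varepsilon_0$.

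Next I would verify that the perturbations produced by Algorithm~\ref{proc:Superiorization} together with~\eqref{eq:perturbNonAscending} are bounded perturbations in the sense of Definition~\ref{def:perturbationResilience}. Boundedness of the directions $v^k=(v_1,v_2)^k$ is immediate from Lemma~\ref{prop:nonascending}, which gives $\|v^k\|\leq 1$. Summability of the step sizes follows from line~4 of Algorithm~\ref{proc:Superiorization}: since $\beta_{k+1}=a\beta_k$ with $0<a<1$, the sequence $(\beta_k)$ is geometric and hence $\sum_{k=0}^{\infty}\beta_k=\beta_0/(1-a)<\infty$. Thus the perturbed iterates produced in line~3 fit exactly into the scheme~\eqref{eq:basicSuperiorization}.

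With these two observations in hand, I would apply Theorem~\ref{thm:perturbationResilienceCG} to the CG algorithmic operator associated with~\eqref{eq:weightedLSsplit}. The unperturbed CG sequence converges to a minimizer of the PWLS objective, so for every $\varepsilon>\varepsilon_0$ the $\varepsilon$-output $O(\Gamma_\varepsilon,\mathcal{X}_\mathcal{A})$ exists for every initialization. Strong perturbation resilience then yields that the $\varepsilon$-output $O(\Gamma_\varepsilon,\mathcal{Y}_\mathcal{A})$ of the perturbed sequence generated by Algorithm~\ref{proc:Superiorization} also exists for every initial $(u_1,u_2)$, which is exactly the claim that the stopping criterion~\eqref{eq:SuperiorizationStopCrit} is met after finitely many iterations.

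The main subtlety I anticipate is making sure the abstract setup of Definition~\ref{def:perturbationResilience} really applies here: the perturbation in~\eqref{eq:perturbNonAscending} is obtained from a proximal mapping of a non-convex, non-continuous prior rather than from a generalized gradient, so one must check that it truly produces an additive bounded-perturbation sequence. This is precisely what Lemma~\ref{prop:nonascending} together with the geometric annealing of $\beta_k$ delivers; once this bookkeeping is done, the theorem follows directly from the perturbation resilience of CG, and no new convergence analysis of the non-convex Potts step is required.
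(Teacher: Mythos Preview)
Your proposal is correct and follows essentially the same route as the paper's own proof, which simply cites Theorem~\ref{thm:perturbationResilienceCG}, Lemma~\ref{prop:nonascending}, and property~(ii) of Definition~\ref{def:perturbationResilience}. You have merely spelled out the bookkeeping that the paper leaves implicit: identifying the stopping criterion with a proximity function, checking that the geometric sequence $(\beta_k)$ is summable, and invoking the equivalence of \eqref{eq:weightedLS} and \eqref{eq:weightedLSsplit}.
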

\begin{proof}
The proof follows from \Cref{thm:perturbationResilienceCG},
\Cref{prop:nonascending} 
and property (ii) in \Cref{def:perturbationResilience}.
\end{proof}
Concerning the parameters $\beta_k$, we used the scaling
${\| A^Tf\|_2}/{\|A\|_2^2}$
to adjust $\beta_k$ to the scale of the data
which corresponds to employing the sequence
$\beta_k = \frac{\| A^Tf\|_2}{\|A\|_2^2}\tilde{\beta}_k$, where
$\tilde{\beta}_k$ is summable, i.e., $\sum_{k}\tilde{\beta}_k < \infty$, e.g.,
$\tilde{\beta}_k = a^k\beta_0$ for $0<a<1$ and $\beta_0>0$ as in 
Algorithm \ref{proc:Superiorization}.

\paragraph{Perturbations by the proximal mapping of the block-wise Potts prior.}
We derived perturbations in terms of the block-wise
Potts prior by taking a step towards its proximal mapping.
By \Cref{prop:nonascending} this approach yields a sequence of non-ascending directions 
w.r.t.\,the block-wise Potts prior and \Cref{prop:convergenceBasicScheme} ensures the
termination of the corresponding instance of 
Algorithm \ref{proc:Superiorization}.
As \eqref{eq:perturbNonAscending} takes 
a step towards the 
proximal mapping of the block-wise Potts prior, we may consider 
perturbations which take the proximal mapping itself, that is,
\begin{equation}\label{eq:perturbProx}
\mathcal{P}_{\beta_k}\Big((u_1,u_2)^{k+1}\Big)= \prox_{\beta_k F}\Big((u_1,u_2)^{k+1/2}\Big).
\end{equation}
It follows immediately from optimality in the proximal mapping
that the block-wise Potts value  of 
$\mathcal{P}_{\beta_k}\Big((u_1,u_2)^{k+1}\Big)$ 
is lower or equal than the one of $(u_1,u_2)^{k+1/2}$.
(This also follows from \Cref{prop:nonascending}.)

In our experiments (see \Cref{fig:SuperiorizationApproaches}), we observed
that this perturbation strategy improves upon the perturbations
given by \eqref{eq:perturbNonAscending}.
Furthermore, the perturbation strategy \eqref{eq:perturbProx} 
needs not to be scaled by some constant factor to bring
$(u_1,u_2)^{k+1/2}$ 
and its perturbation $\mathcal{P}_{\beta_k}\Big((u_1,u_2)^{k+1}\Big)$
to the same scale.
We remark that \eqref{eq:perturbProx} may also be seen as an
additive perturbation by defining
$(v_1,v_2)^k = \frac{1}{\beta_k}\Big( \prox_{\beta_k F}\big((u_1,u_2)^{k+1/2}\big) -(u_1,u_2)^{k+1/2}\Big)$,
so that \eqref{eq:perturbProx} could be written as
$(u_1,u_2)^{k+1/2} + \beta_k (v_1,v_2)^k$.

\begin{remark}\label{rem:proximalBoundedPerturbation}
	It is an open question whether \eqref{eq:perturbProx} yields
	bounded perturbations in the sense of \Cref{def:perturbationResilience}. 
	In \cite{censor2020superiorization}, this was proven for
	the special case of (smoothed) total variation.
	The proof uses the Lipschitz continuity of the target function. 
	Unfortunately, neither the Potts prior nor the block-wise Potts prior
	are even continuous.
	In general, the results for perturbations obtained from
	evaluating proximal mappings are rather rare in the literature.
	However, the practical results encourage future work on this topic.
\end{remark}

\subsection{Potts S-CG}
In view of \eqref{eq:weightedLSsplit}, a large choice of $\mu$ 
enforces the CG steps to put more emphasis on the deviations between $u_1$ and $u_2$. 
Thus, large values of $\mu$ lead to results $u_1,u_2$ which are closer to each other.
Furthermore, the block-wise Potts prior \eqref{eq:BlockwisePottsPriorAniso}
perturbs $u_1,u_2$ towards row-wise and column-wise 
piecewise constancy, respectively.
In order to obtain solutions 
which satisfy equality, i.e., $u_1 = u_2$,
and which are in addition genuinely piecewise constant, we 
propose a modified approach of Algorithm \ref{proc:Superiorization} 
inspired by penalty methods for energy minimization:
after the lines 2-3 in Algorithm \ref{proc:Superiorization} have
been conducted,
we increase the coupling parameter $\mu$. 
By starting with a low value $\mu_0$, the CG steps give more weight
to the data fidelity in the first iterations, while in the later iterations they give more weight to the discrepancies between $u_1$ and $u_2$.
Thus, the variables become closer in the later iterations
until they become (approximately) equal.
This modified superiorization approach 
which employs perturbations by evaluating proximal mappings
is summarized in Algorithm \ref{proc:PottsCGscheme},
which we call \emph{Potts S-CG} (Potts superiorized conjugate gradient).

\begin{figure}[t]
	\centering
	\captionsetup{justification=centering}
	\def\figWidth{0.23\textwidth}
	\def\hs{\hspace{0.7em}}
	\def\vs{\\[0.5em]}	
	\begin{subfigure}{\figWidth}
		\includegraphics[width=\textwidth]{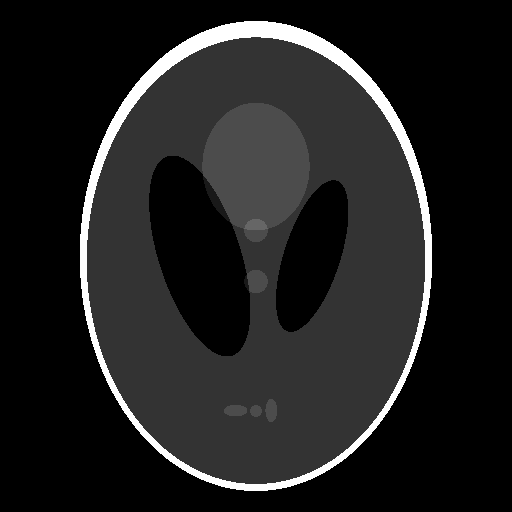}
		\subcaption{Ground truth\\$512\times 512$\label{subfig:LoganPhantom}\\~}
	\end{subfigure}\hs	
	\begin{subfigure}{\figWidth}
		\includegraphics[width=\textwidth]{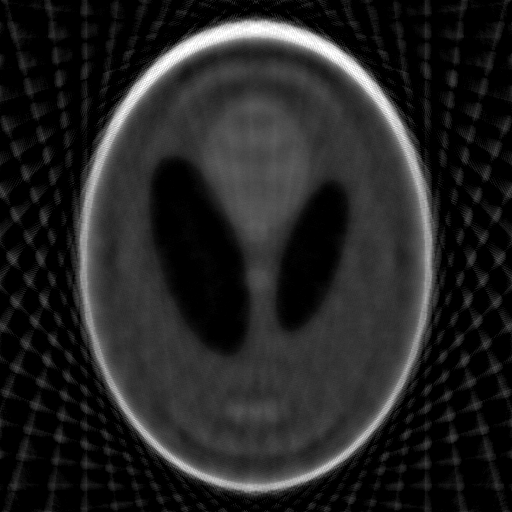}
		\subcaption{Unperturbed CG,\label{subfig:conjGrad}
			MSSIM=\protect\input{Experiments/SuperiorizationComparison/score_cg.txt}\unskip\\~}
	\end{subfigure}\hs
	\begin{subfigure}{\figWidth}
		\includegraphics[width=\textwidth]{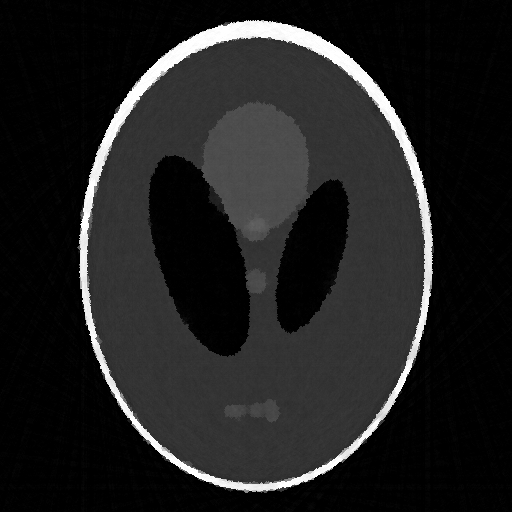}
		\subcaption{Perturbed by non-ascending directions,
			\label{subfig:classicSuper}
			$\mu$=\protect\input{Experiments/SuperiorizationComparison/rho.txt}\unskip,
			MSSIM=\protect\input{Experiments/SuperiorizationComparison/score_additive.txt}\unskip}
	\end{subfigure}\vs	
	\begin{subfigure}{\figWidth}
		~
		\subcaption*{~\\~}
	\end{subfigure}\hs		
	\begin{subfigure}{\figWidth}
		\includegraphics[width=\textwidth]{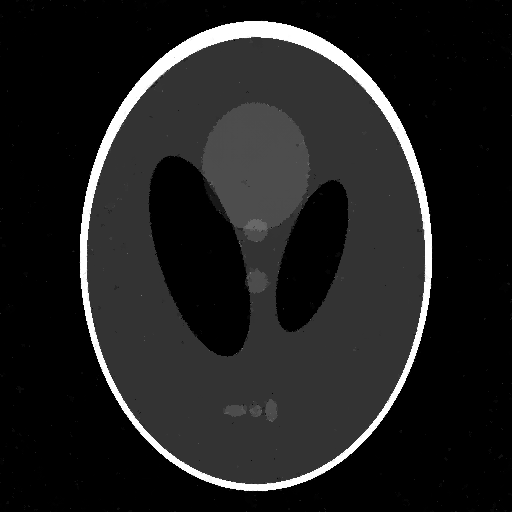}
		\subcaption{Perturbed by proximal mappings,\label{subfig:proxSuper}
			$\mu$=\protect\input{Experiments/SuperiorizationComparison/rho.txt}\unskip,
			MSSIM=\protect\input{Experiments/SuperiorizationComparison/score_prox.txt}\unskip}
	\end{subfigure}\hs	
	\begin{subfigure}{\figWidth}
		\includegraphics[width=\textwidth]{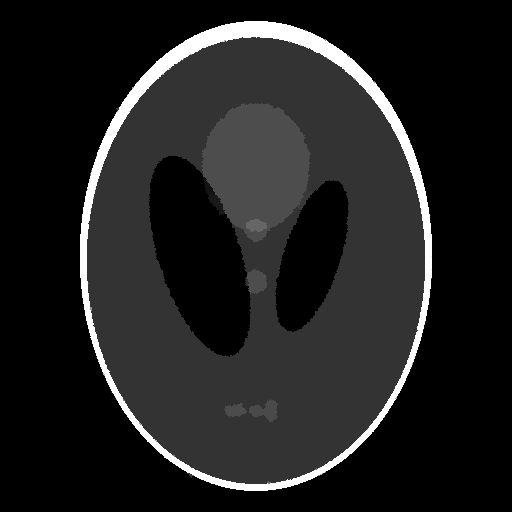}
		\subcaption{Potts S-CG,\label{subfig:modifiedSuper}\\
			$\mu_0$=\protect\input{Experiments/SuperiorizationComparison/rho0.txt}\unskip,
			MSSIM=\protect\input{Experiments/SuperiorizationComparison/score_pottsSCG.txt}\unskip\\~}
	\end{subfigure}	
	\captionsetup{justification=justified}	
	\caption[Comparison of Potts superiorization approaches for the 
	reconstruction from undersampled and noisy Radon data]{\label{fig:SuperiorizationApproaches}
		Reconstructions from undersampled Radon data 
		(\protect\input{Experiments/SuperiorizationComparison/nAngles.txt}\unskip~projection angles corrupted by Gaussian noise with 
		$\sigma=\protect\input{Experiments/SuperiorizationComparison/sigma.txt}$\unskip). 
		(\subref{subfig:LoganPhantom}) Original.
		(\subref{subfig:conjGrad}) Unperturbed CG method.
		(\subref{subfig:classicSuper}) Perturbing the CG iterations with non-ascending
		directions for the block-wise Potts prior improves upon the unperturbed
		CG method.
		(\subref{subfig:proxSuper}) Perturbing with the proximal mapping of the block-wise Potts prior yields a further regularized result.
		(\subref{subfig:modifiedSuper}) The proposed Potts S-CG method 
		produces a less grainy and piecewise constant result and achieves the highest
		MSSIM value.
	}
\end{figure}

\begin{nobreak}
\begin{algorithm}
	\caption{Basic anisotropic Potts S-CG scheme\label{proc:PottsCGscheme}}
	\SetCommentSty{footnotesize}
	\PrintSemicolon
		Choose an initial perturbation parameter $\beta_0>0$ 
		and coupling parameter $\mu_0>0$. Choose an annealing
		parameter $0<a<1$, set $k\leftarrow 0$.\newline
		Iterate until 
		$u_1,u_2$ become equal:
		
		Perform a CG step on $(u_1,u_2)^k$ 
		w.r.t.\,the least squares problem \eqref{eq:weightedLSsplit}
		for $\mu = \mu_k$ and 
		obtain $(u_1,u_2)^{k+1/2}$.
		
		Perturb $(u_1,u_2)^{k+1/2}$ by the proximal mapping of the 
		block-wise Potts prior for the current perturbation parameter $\beta_k$, i.e.,
		$(u_1,u_2)^{k+1}\leftarrow\prox_{\beta_k F} (u_1,u_2)^{k+1/2}$.
		
		Update 
		$\beta_{k+1} \leftarrow a \beta_k$,
		increase the coupling parameter $\mu_{k+1} \leftarrow (\mu_0\beta_0) / \beta_k$,
		$k\leftarrow k+1$.
		
		Go to 2.
\end{algorithm}
\end{nobreak}
In Figure \ref{fig:SuperiorizationApproaches}, we compare the 
results of the perturbation
strategies \eqref{eq:perturbNonAscending}, \eqref{eq:perturbProx} 
and the Potts S-CG approach for 
image reconstruction from undersampled noisy Radon data 
(measurements from $\protect25$ projection angles corrupted by Gaussian noise of variance 
$\sigma = 0.25$). 
Please note that this corresponds to a simplified model compared to the more 
accurate model \eqref{eq:Poisson}. However, we use this simplified
model to obtain an initial comparison of the presented Potts superiorization methods.
For the perturbation strategies \eqref{eq:perturbNonAscending}, \eqref{eq:perturbProx},
we used the coupling parameter $\mu=10$
and the stopping rule \eqref{eq:SuperiorizationStopCrit} 
with $\varepsilon = m\sigma^2 + \mu^2$, where
$m$ denotes the total number of measurements.
Potts S-CG used the initial coupling parameter $\mu_0=0.02$
and the iteration was stopped when $u_1,u_2$ became (approximately)
equal.
All three approaches used the annealing parameters $a=0.99$ 
and $\beta_0=1$.
We observe that all perturbation strategies improve upon 
unperturbed CG. 
Furthermore, we see that 
taking the proximal mapping \eqref{eq:perturbProx} produces a more regularized solution than adding non-ascending directions 
\eqref{eq:perturbNonAscending}.
Finally, the result of Potts S-CG
is genuinely piecewise constant and
has the highest mean structural similarity index (MSSIM).
(We refer to \Cref{sec:Experiments} for more details on the
MSSIM.)

In the following, we give further details on the Potts S-CG approach.
In particular, we
describe the extensions to more isotropic solutions
and multi-channel data.

\paragraph{Extension to more isotropic discretizations and multi-channel measurements.}
We extend the Potts S-CG approach to more 
isotropic discretizations, i.e., directions $d_1,\ldots,d_S$,
and multi-channel measurements $f$. First, we note that 
the basic PWLS problem for multi-channel measurements 
is given by the channel-wise sum
\begin{equation}\label{eq:weightedLSmultichannel}
\min_u \sum_{c=1}^{C} \frac{1}{2}\| W^{\frac{1}{2}}Au_c - W^{\frac{1}{2}}f_c \|^2.
\end{equation}
As a preparation, we define the block-matrices $\overline{A}_{\mu,c}$ given by
\begin{equation}
\label{eq:superiorizationSystemMatrix}
\overline{A}_{\mu,c }= 
\begin{pmatrix}
W_c^{\frac{1}{2}}A & &    	 &        &       &    \\
& W_c^{\frac{1}{2}}A       	    &    \\
&    	   &   	& \ddots  & & \\
&    	   &   	&  &    &W_c^{\frac{1}{2}}A\\[0.4em]
\mu I   	& -\mu I 	& 0		&	\ldots	&	0	      & 0 \\
\mu I   	& 0  	    & -\mu I& \ldots    &	0	      & 0 \\
& \vdots    & 		& 			&\vdots	&        \\
\mu I	    & 0  	    & 0     & \ldots    &	          &-\mu I\\

0			&\mu I   	& -\mu I 	& \ldots	&	0	      & 0 \\
&\vdots   	& 	     	&  			&	\vdots	&        \\
0			&\mu I   	& 0 	& \ldots	&	0	      & -\mu I \\
& 		   	&     	&\vdots  			&	&        \\
0     		&0 		   	&0     	&\ldots  				&    \mu I  &  -\mu I
\end{pmatrix}\quad \text{and} \quad
\overline{f}_c = 
\begin{pmatrix}
W_c^{\frac{1}{2}}f_c\\
W_c^{\frac{1}{2}}f_c\\
\vdots \\
W_c^{\frac{1}{2}}f_c\\[0.4em]
0 \\
0 \\
\vdots\\
\vdots\\
\vdots\\
\vdots\\
\vdots\\
0 
\end{pmatrix}.
\end{equation}
The upper block part of $\overline{A}_{\mu,c}$ realizes
the data fidelities for the $c$-th channel
and its lower part the mutual deviations
between the variables
$(u_1,\ldots,u_S)$ in the $c$-th channel.
Please note that we use the notation $u_{s,c}$ 
to refer to the $c$-th channel of $u_s$.
Using this notation we can now formulate
the corresponding PWLS problem,
that is, the counterpart of \eqref{eq:weightedLSsplit};
it is given by 
\begin{equation}
\label{eq:weightedLSsplitIsotropic}
(u_1^\ast,\ldots,u_S^\ast)\in \argmin_{u_1,\ldots,u_S}
\sum_{c=1}^C
\frac{1}{2}
\big\|
\overline{A}_{\mu,c}\,
(u_{1,c}, \ldots,u_{S,c})^T
- 
\overline{f}_c
\big\|^2
\end{equation}
and the corresponding normal equations for each channel are given by
\begin{equation}
\label{eq:weightedLSsplitNormalEquationsIso}
\overline{A}_{\mu,c}^T\overline{A}_{\mu,c}\, (u_{1,c}, \ldots,u_{S,c})^T = 
\overline{A}_{\mu,c}^T\, \overline{f}_c.
\end{equation}
After evaluating the multiplications in \eqref{eq:weightedLSsplitNormalEquationsIso},
we obtain the following more explicit equations
\begin{equation}
\label{eq:weightedLSsplitIsotropicNormalEquations}
\begin{pmatrix}
\sum_{t=2}^{S} \mu^2 (u_{1,c}-u_{t,c}) + A^TW_cAu_{1,c} \\
\vdots\\
\sum_{t\neq s}^{S} \mu^2 (u_{s,c}-u_{t,c}) + A^TW_cAu_{s,c} \\
\vdots \\
\sum_{t=1}^{S-1} \mu^2 (u_{S,c}-u_{t,c}) + A^TW_cAu_{S,c}
\end{pmatrix}
=
\begin{pmatrix}
A^T W_cf_c \\ \vdots \\ A^T W_cf_c \\ \vdots \\A^T W_cf_c
\end{pmatrix}
\end{equation}
for each channel $c=1,\ldots,C$. The next proposition
establishes the relation between the basic weighted least squares problem \eqref{eq:weightedLSmultichannel} 
and its augmented counterpart \eqref{eq:weightedLSsplitIsotropic}.
A proof can be found in the appendix.
\begin{proposition}\label{prop:WeightedLSBlockwiseEquality}
	(i) A minimizer $u^\ast$ of the least squares problem 
	\eqref{eq:weightedLSmultichannel}
	induces a minimizer 
	of the augmented least squares problem \eqref{eq:weightedLSsplitIsotropic} 
	via $(u^\ast,\ldots,u^\ast)$. 
	(ii) A minimizer $(u_1^\ast,\ldots,u_S^\ast)$ of the augmented least squares problem
	\eqref{eq:weightedLSsplitIsotropic} satisfies $u_1^\ast = \ldots = u_S^\ast$. 
	Further, $u_1^\ast$ is a minimizer of \eqref{eq:weightedLSmultichannel} as well.
	(iii) If $A$ has full column rank, then $u^\ast$ is the (unique) minimizer of \eqref{eq:weightedLSmultichannel} if and only if 
	$(u^\ast,\ldots,u^\ast)$ is the (unique) minimizer of \eqref{eq:weightedLSsplitIsotropic}.
\end{proposition}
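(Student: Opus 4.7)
The plan is to treat the three parts in order, with the main machinery being a convex averaging trick for (ii).

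For part (i), I would simply substitute $(u^\ast,\ldots,u^\ast)$ into the augmented objective. Every pairwise coupling block $\mu(u_{s,c}-u_{t,c})$ vanishes, and each of the $S$ data-fidelity blocks reduces to $\|W_c^{1/2}Au^\ast_c - W_c^{1/2}f_c\|^2$, so the augmented objective equals $S$ times the original objective at $u^\ast$. Conversely, for any candidate $(u_1,\ldots,u_S)$, each data-fidelity block $\|W_c^{1/2}Au_{s,c}-W_c^{1/2}f_c\|^2$ is at least $\min_{v}\|W_c^{1/2}Av-W_c^{1/2}f_c\|^2$ (which is attained at $u^\ast_c$ because \eqref{eq:weightedLSmultichannel} is separable across channels), and the coupling terms are non-negative. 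Summing these bounds shows that $(u^\ast,\ldots,u^\ast)$ attains the infimum.

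For part (ii), the key step is to form the channel-wise average $\bar u_c = \frac{1}{S}\sum_{s=1}^S u^\ast_{s,c}$ and to compare $(u_1^\ast,\ldots,u_S^\ast)$ with $(\bar u,\ldots,\bar u)$. Writing $\phi_c(v) := \|W_c^{1/2}Av - W_c^{1/2}f_c\|^2$, which is convex, Jensen's inequality gives
\begin{equation*}
S\,\phi_c(\bar u_c) \;\leq\; \sum_{s=1}^S \phi_c(u^\ast_{s,c}).
\end{equation*}
At the tuple $(\bar u,\ldots,\bar u)$ all pairwise coupling terms vanish, so the augmented objective is $\sum_c S\phi_c(\bar u_c)$. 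Optimality of $(u_1^\ast,\ldots,u_S^\ast)$ together with the Jensen bound then forces
\begin{equation*}
\sum_c \sum_{s<t} \mu^2\|u^\ast_{s,c} - u^\ast_{t,c}\|^2 \;\leq\; 0,
\end{equation*}
so all $u^\ast_{s,c}$ must coincide. Calling their common value $u^\ast$, the augmented objective at the minimizer equals $S\sum_c \phi_c(u^\ast_c)$; if some $v$ gave a strictly smaller value of \eqref{eq:weightedLSmultichannel}, then $(v,\ldots,v)$ would beat $(u_1^\ast,\ldots,u_S^\ast)$ in \eqref{eq:weightedLSsplitIsotropic}, a contradiction. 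Hence $u_1^\ast$ solves \eqref{eq:weightedLSmultichannel}.

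For part (iii), full column rank of $A$ makes each $W_c^{1/2}A$ injective, so $\phi_c$ is strictly convex and \eqref{eq:weightedLSmultichannel} has a unique minimizer. Parts (i) and (ii) then yield a bijection between minimizers of the two problems via $u^\ast \leftrightarrow (u^\ast,\ldots,u^\ast)$, transporting uniqueness in either direction.

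The only genuinely delicate point is the step in (ii): one might be tempted to use strict convexity of $\phi_c$ to conclude equality from Jensen, but $\phi_c$ need not be strictly convex in general (e.g.\ if $A$ has a nontrivial kernel). The trick is that strict convexity is \emph{not} needed—the pairwise coupling terms are the device that forces $u_1^\ast=\cdots=u_S^\ast$, while Jensen plus optimality pins them together. Once that subtlety is handled cleanly, the remaining arguments are routine.
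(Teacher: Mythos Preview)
Your proof is correct, but parts (i) and (ii) follow a genuinely different route from the paper. For (i), the paper verifies that $(u^\ast,\ldots,u^\ast)$ satisfies the normal equations \eqref{eq:weightedLSsplitIsotropicNormalEquations}, whereas you bound the augmented objective from below by $S$ times the original minimum and show the bound is attained. For (ii), the paper argues by contradiction using an \emph{external} minimizer $u^\ast$ of \eqref{eq:weightedLSmultichannel}: it compares $(u_1^\ast,\ldots,u_S^\ast)$ to $(u^\ast,\ldots,u^\ast)$ and notes that the latter has minimal data fidelity and zero coupling, contradicting strict positivity of the coupling term. Your averaging argument is more self-contained---you build the comparison point $\bar u$ internally from the given minimizer and use Jensen on the convex quadratic $\phi_c$ to control the data blocks, so you never need to appeal to the existence of a minimizer of \eqref{eq:weightedLSmultichannel} as an input. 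Both approaches land on the same conclusion that the coupling sum must vanish; yours makes slightly more explicit why strict convexity of $\phi_c$ is not required, which is a nice touch. Part (iii) is essentially identical in both, with the same tacit assumption that the diagonal weights $W_c$ are positive so that $W_c^{1/2}A$ inherits injectivity from $A$.
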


By following the lines of the general CG algorithm
(cf. Algorithm \ref{alg:CGstep} in the appendix), the CG step specifically for \eqref{eq:weightedLSsplitIsotropicNormalEquations} is given by 
Algorithm \ref{alg:GCstepSuper} in the appendix.

Next, we derive the perturbation step by means of the proximal
mapping of the block-wise Potts prior with $S$ directions.
We defined the block-wise Potts prior for the 
special case $S=2$ in \eqref{eq:BlockwisePottsPriorAniso}. For the more general 
situation with $S$ directions as above,
it is given by 
\begin{equation}
\label{eq:BlockwisePottsPriorIso}
F(u_1,\ldots,u_S) = \sum_{s=1}^{S} \omega_s \| \nabla_{d_s} u_s \|_0.
\end{equation}
(Recall the definition of $\| \nabla_{d_s} u_s \|_0$ in \eqref{eq:discreteL0Multichannel} 
and the weights $\omega_s$ from \Cref{sec:PottsPrior}.)
To perturb the CG iterates of \eqref{eq:weightedLSsplitIsotropicNormalEquations}
by means of the block-wise Potts prior \eqref{eq:BlockwisePottsPriorIso},
we need to evaluate its proximal mapping as above.
The proximal mapping of the block-wise Potts prior for $S$ directions \eqref{eq:BlockwisePottsPriorIso}
is defined by
\begin{equation}
	\label{eq:BlockwisePottsProxIso}
	\prox_{\beta F}(u_1,\ldots,u_S)
	= \argmin_{w_1,\ldots,w_S} F(w_1,\ldots,w_S) + \tfrac{1}{2\beta} 
	\sum_{s=1}^{S} \| u_s- w_s \|^2.
\end{equation}
As seen in \Cref{sec:PottsModel},
the minimization in \eqref{eq:BlockwisePottsProxIso} decomposes into 
problems, which depend on one of the $u_s$ only. These smaller problems are of the form 
\eqref{eq:genericPottsSubproblem},
which can be efficiently solved by dynamic programming as described in Section \ref{sec:PottsModel}.
We provide the pseudocode of the Potts S-CG 
for more isotropic discretizations and multi-channel data
in Algorithm \ref{alg:superiorizedCG}.

\begin{algorithm}[t]
	\def\vs{\\[0.3em]}
	\caption{\label{alg:superiorizedCG}
		Potts S-CG
	}
	\SetCommentSty{footnotesize}
	\PrintSemicolon
	\KwIn{Forward operator $A\in\R^{m\times n^2}$, multispectral sinogram $f\in\R^{m\times c}$, PWLS weights $W\in\R^{m\times m}$,
		annealing parameter $0<a<1$, initial parameters $\beta_0,\mu_0>0$,
		stopping parameter $\mathtt{tol}>0$}
	\KwOut{$u^\ast\in\R^{n\times n\times C}$
	}
	\BlankLine
	Initialize for all $s=1,\ldots,S$:
	
	$u_s^{0} = A^TWf$,~~$p_s^0=A^TWf$,~~
	$h_s^0=A^TWAp_s^0+\sum_{t\neq s} \mu^2_0(p_s^0-p_t^0)$
	
	$k=0$
	\BlankLine
	\Repeat{$\| u_s^k-u^k_{s+1} \|_\infty < \mathtt{tol}$ ~for all $s=1,\ldots,S-1$}
	{
		\tcc{Increase the splitting penalty parameter proportionally to $\beta_k$}
		$\mu_k \leftarrow (\mu_0\beta_0) / \beta_k$ 
		
			\tcc{Perturb the iterates with the block-wise Potts prior by solving linewise Potts problems for jump penalty
				$2\beta_k$: }
			$(u_1,\ldots,u_S)^{k+1/2}
			= \argmin\limits_{w_1,\ldots,w_S} 
			\sum_{s}\| u_s^{k}-w_s\|^2 +2\beta_k \omega_s \| \nabla_{d_s}w_s\|_0$ 			
			
			\tcc{Peform a CG step on $(u_1,\ldots,u_S)^{k+1/2}$
				w.r.t.\,the weighted normal equations \eqref{eq:weightedLSsplitIsotropicNormalEquations} : }
			\For{$c=1,\ldots,C$}{
				Compute $u_{s,c}^{k+1},h_{s,c}^{k+1},p_{s,c}^{k+1}$ 
				from $u_{s,c}^{k+1/2},h_{s,c}^{k},p_{s,c}^{k}$ 
				for all $s=1,\ldots,S$ by Algorithm \ref{alg:GCstepSuper}
				for $\mu=\mu_k$
			}
		
		$\beta_{k+1} \leftarrow a\beta_k$
		
		$k\leftarrow k+1$
	}
	\BlankLine
	\Return{$u^\ast = \frac{1}{S}\sum_{s=1}^{S}u_s^k$}
\end{algorithm}

\section{Comparison of Potts ADMM and Potts S-CG}
\label{sec:ComparisonADMMandPottsCG}
\paragraph{Handling the forward operator.}
Potts ADMM introduces an additional splitting variable 
$v$ in \eqref{eq:PottsSplitted}
to cope with the forward operator $A$. Consequently,
one has to solve a $\ell_2$-regularized problem \eqref{eq:Vstep} w.r.t.\,$A$ and $v$ in each iteration of Potts ADMM.
In contrast, Potts S-CG does not introduce such a splitting variable
and only requires a CG step w.r.t.\,$A$ in each iteration, i.e., it requires only applying $A$ and $A^T$ in each iteration instead of solving 
a $\ell_2$-regularized problem.
As a result, 
Potts S-CG needs more iterations,
while the iterations are cheaper than in Potts ADMM
(\Cref{fig:PottsADMMvsPottsSCG}: Potts S-CG 0.95\,sec/iteration, Potts ADMM 2.35\,sec/iteration).

\begin{figure}[t]
	\centering
	\captionsetup{justification=centering}
	\def\figWidth{0.26\textwidth}
	\def\plotWidth{0.47\textwidth}
	\def\hs{\hspace{0.5em}}
	\def\hss{\hspace{1.5em}}
	\def\vs{\\[1em]}	
	\begin{subfigure}{\figWidth}
		\includegraphics[width=\textwidth]{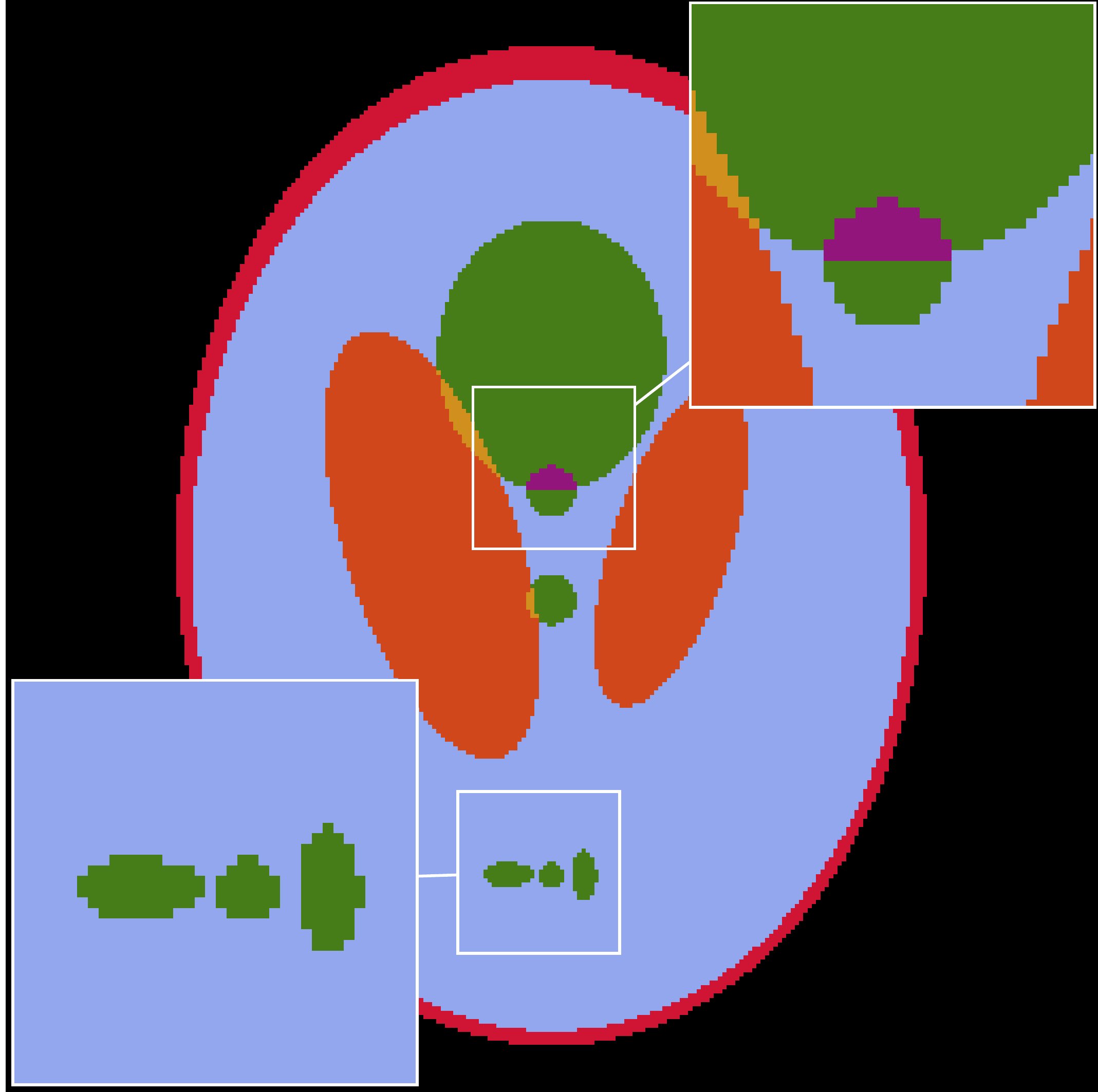}
		\subcaption{Original $256\times 256$\label{subfig:ColoredLoganPhantom}}
	\end{subfigure}\hs	
	\begin{subfigure}{\figWidth}
		\includegraphics[width=\textwidth]{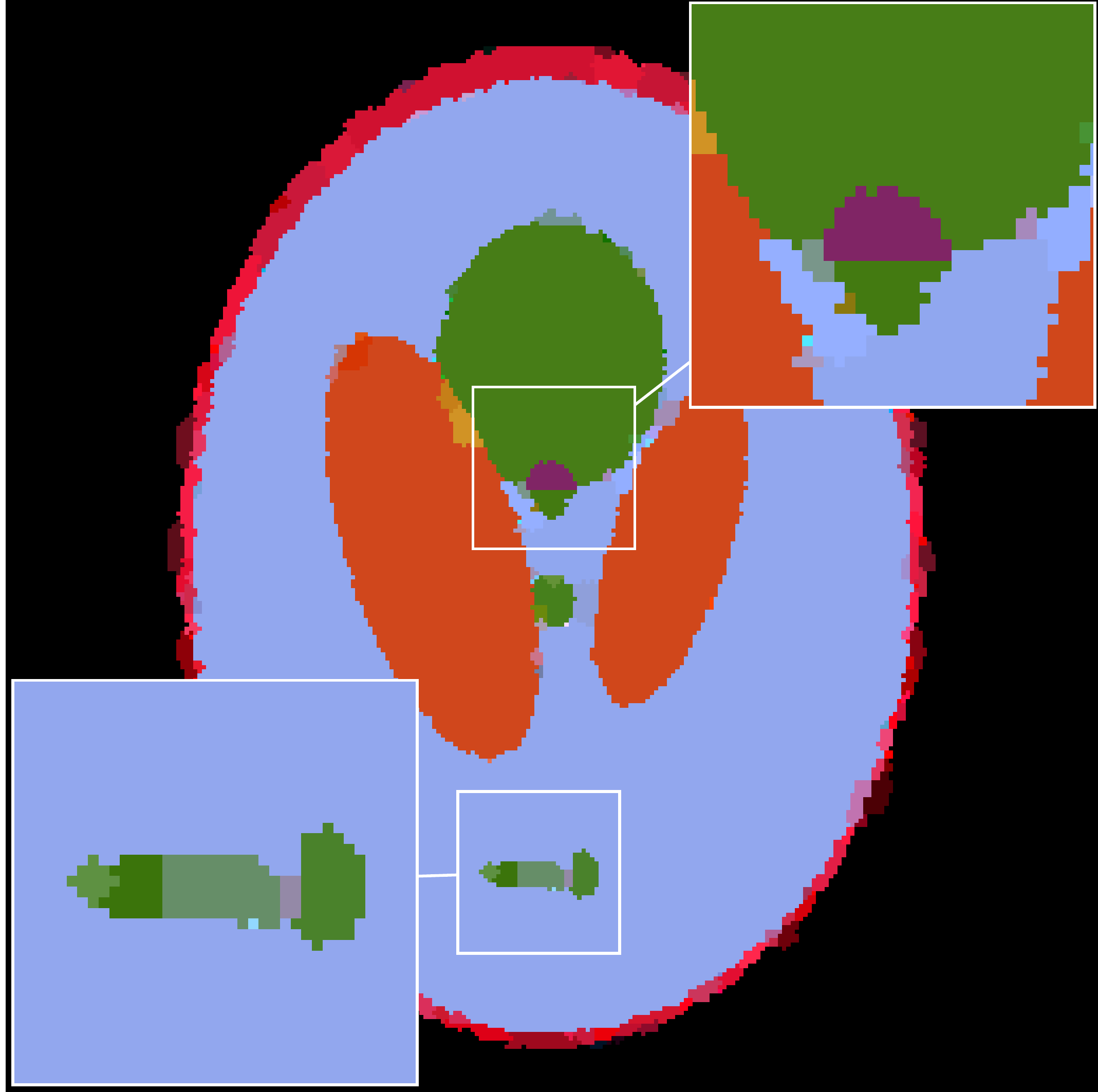}
		\subcaption{Penalty method\label{subfig:PottsRelaxation}}
	\end{subfigure}\hs
	\begin{subfigure}{\figWidth}
		\includegraphics[width=\textwidth]{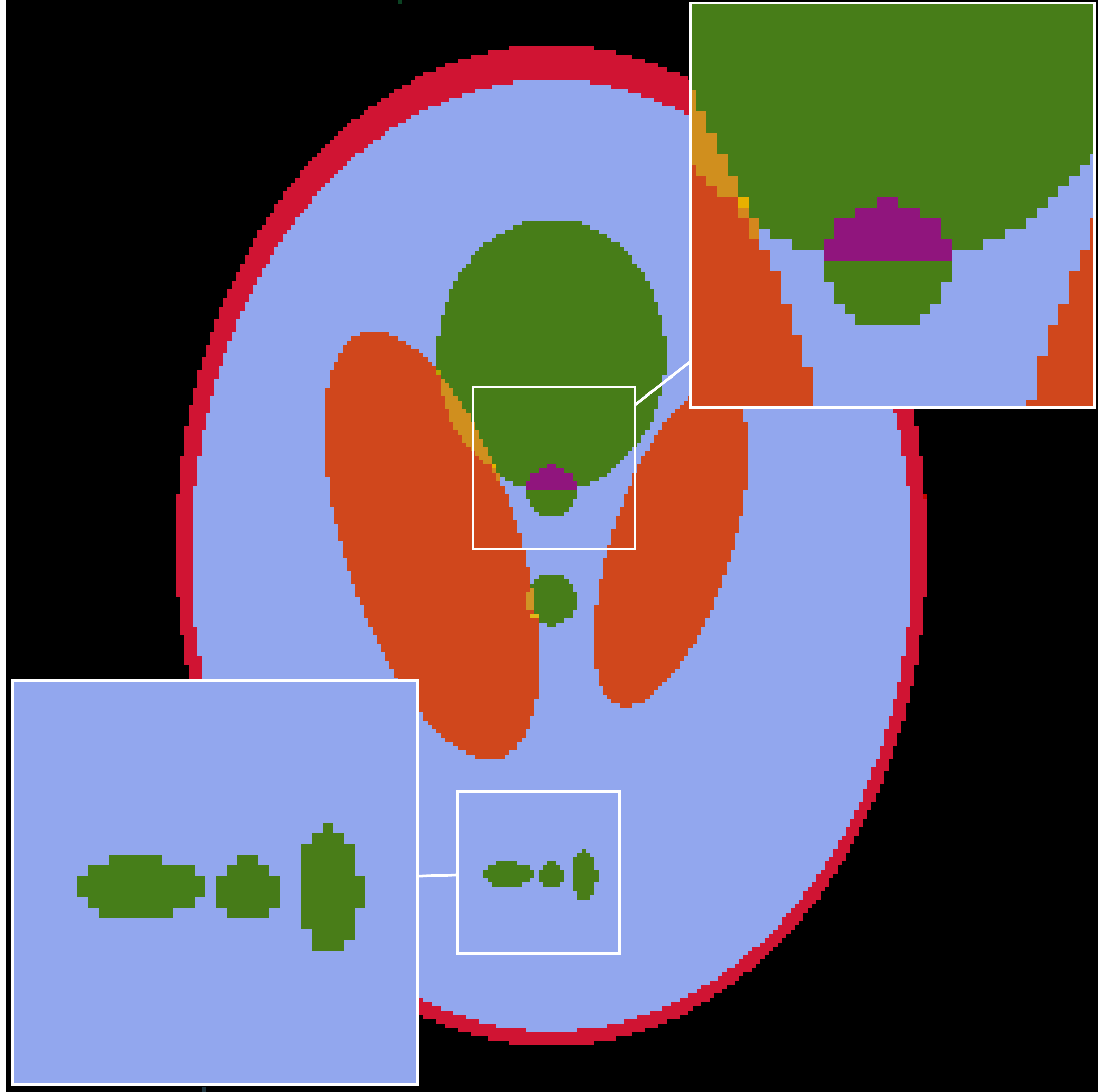}
		\subcaption{Potts ADMM\label{subfig:PottsADMM}}
	\end{subfigure}\vs
	\begin{subfigure}{\figWidth}
		~
		\subcaption*{~}
	\end{subfigure}\hs
	\begin{subfigure}{\figWidth}
		\includegraphics[width=\textwidth]{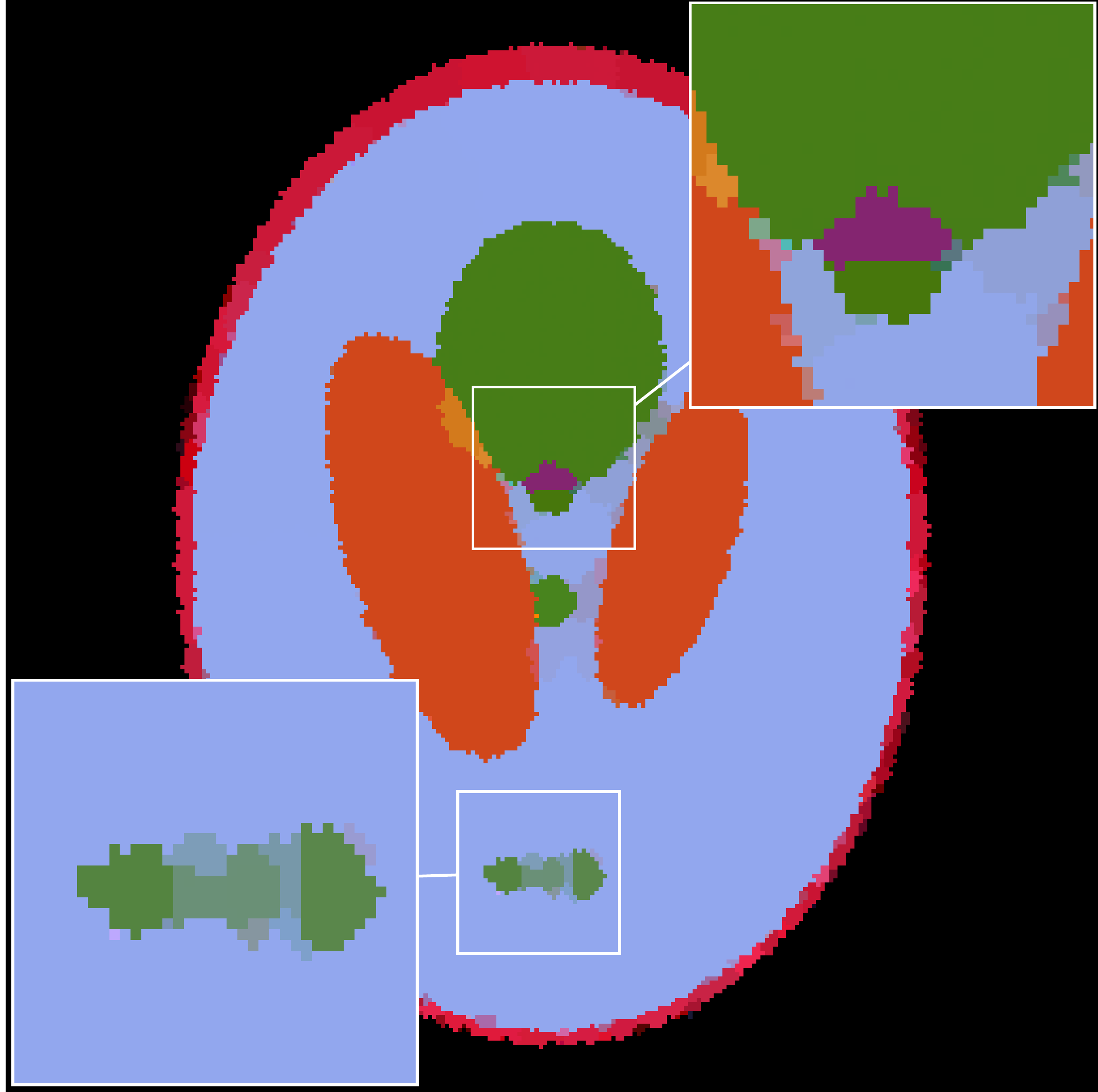}
		\subcaption{Potts S-Landweber\label{subfig:PottsLand}}
	\end{subfigure}\hs
	\begin{subfigure}{\figWidth}
		\includegraphics[width=\textwidth]{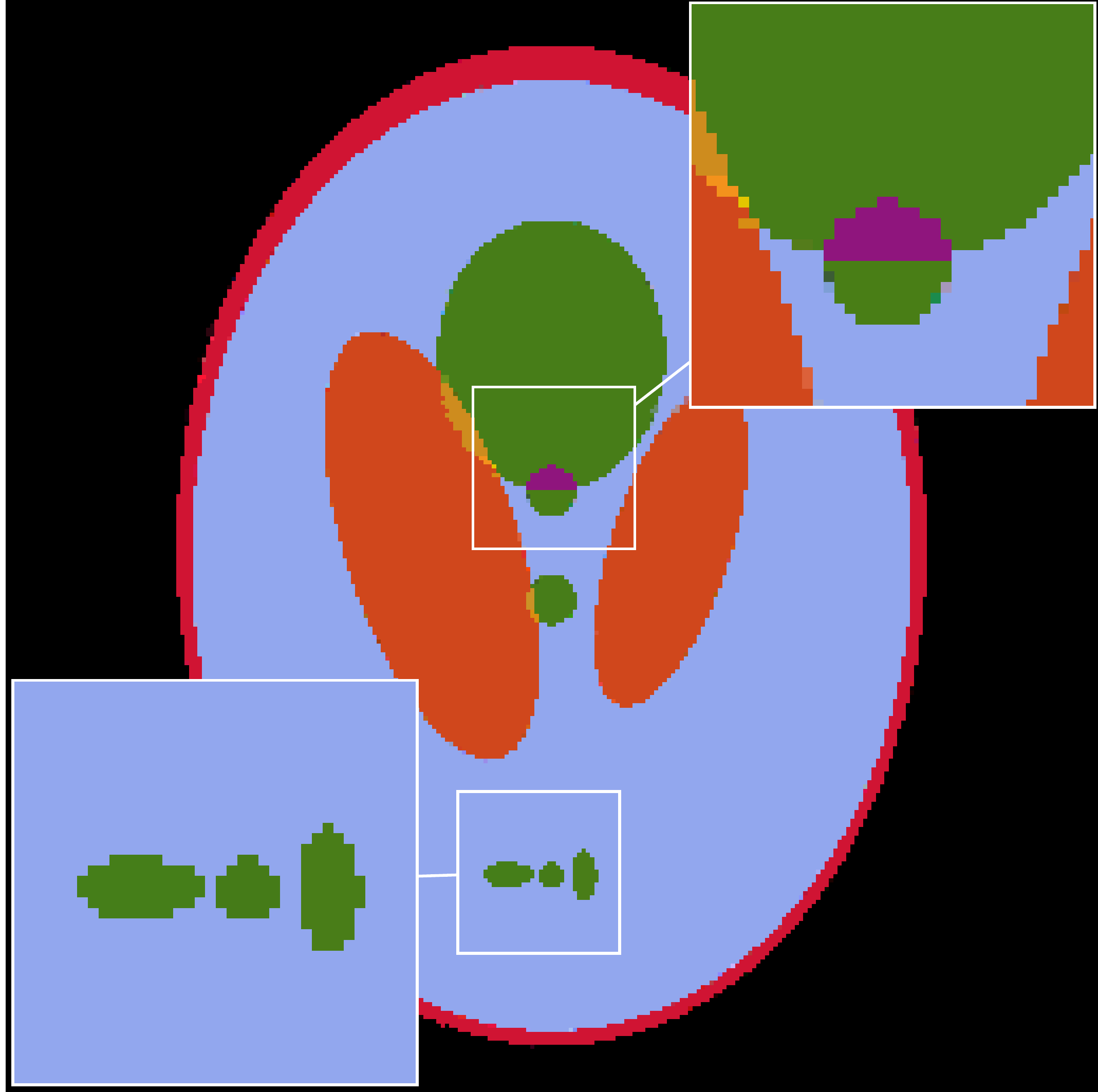}
		\subcaption{Potts S-CG\label{subfig:PottsCG}}
	\end{subfigure}
			\captionsetup{justification=justified}	
			\caption[Potts ADMM and Potts S-CG for undersampled multi-channel Radon data]{
			Potts ADMM and Potts S-CG for undersampled noisy multi-channel Radon data
			(20 angles, $\sigma=0.35$).
			The jumps of all results are spatially aligned across the channels.
			However, the penalty method and Potts S-Landweber produce some spurious artifacts near the segment boundaries which can be seen particularly in the zoomed regions.
			Potts ADMM and Potts S-CG produce improved reconstruction results which are
			close to the ground truth.
			On close inspection, we observe that some of the segment boundaries are briefly smoother in 
			the Potts ADMM result than in the Potts S-CG result (see, e.g., the zoomed regions).\label{fig:PottsADMMvsPottsSCG}}
	~\vs
	\begin{subfigure}{\plotWidth}
		\includegraphics[width=\textwidth]{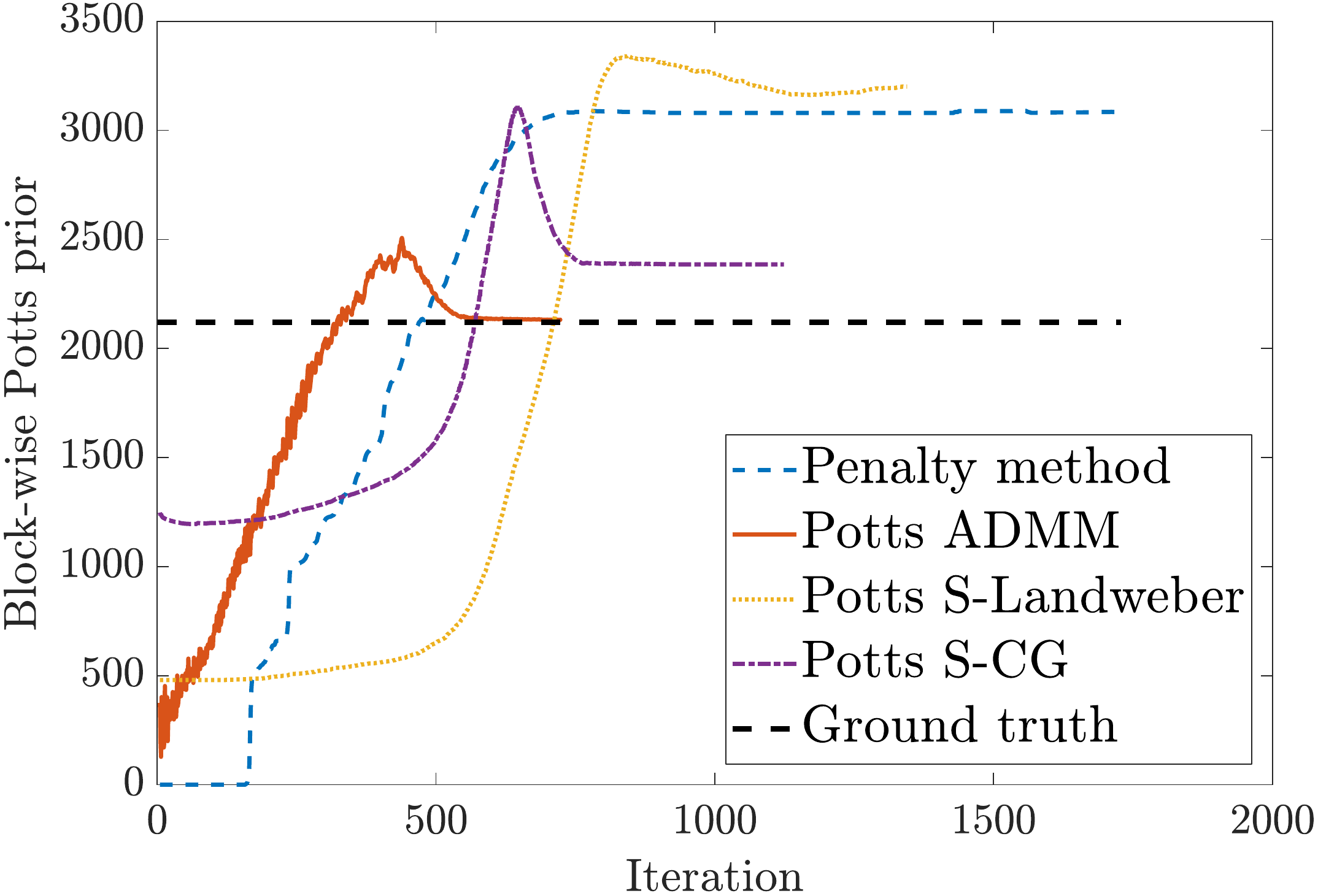}
		\subcaption{Block-wise Potts prior\label{subfig:BlockPottsVsIterations}}
	\end{subfigure}\hss
	\begin{subfigure}{\plotWidth}
		\includegraphics[width=\textwidth]{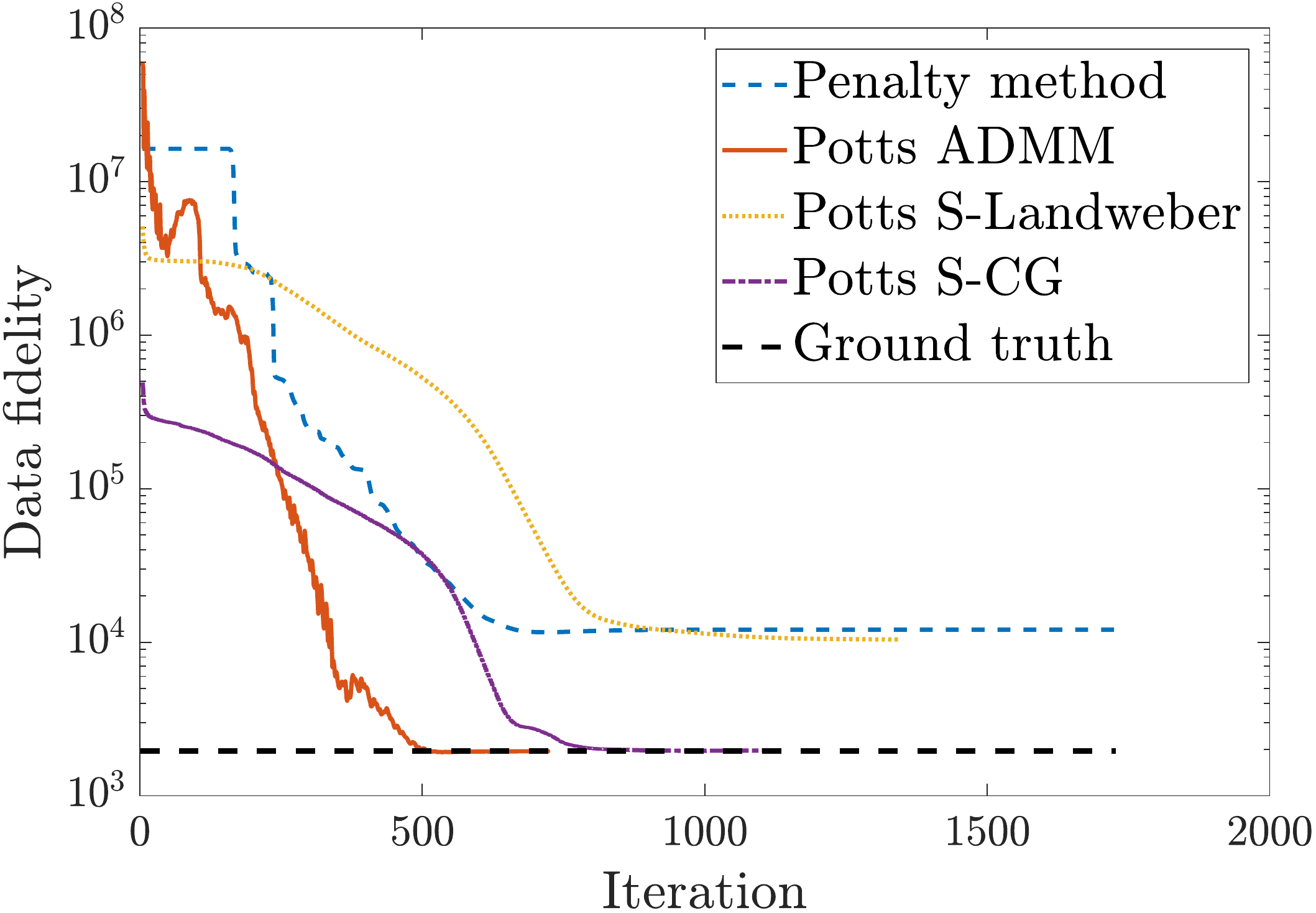}
		\subcaption{Data fidelity\label{subfig:DataVsIterations}}
	\end{subfigure}
	\captionsetup{justification=justified}	
	\caption{
		Values of the block-wise Potts prior \eqref{eq:BlockwisePottsPriorIso} and the data deviations over the iterations
		for the results in \Cref{fig:PottsADMMvsPottsSCG}.
		Potts ADMM and Potts S-CG achieve block-wise Potts prior values 
		and data deviations which are near the respective values of the ground truth.
		This reflects the qualitative findings in \Cref{fig:PottsADMMvsPottsSCG}
		as it shows that the solutions of Potts ADMM and Potts S-CG
		simultaneously agree with the data and minimize the (block-wise) Potts prior.
		\label{fig:PottsADMMvsPottsSCG_Graphs}
	}
\end{figure}

\paragraph{Comparison for multi-channel Radon data.}
Next, we consider the reconstruction of a colored, i.e., three-channel, Shepp-Logan Phantom
from undersampled multi-channel Radon data corrupted
by additive Gaussian noise (20 angles, $\sigma=0.35$).
Further, we study the data deviation,
$\tfrac{1}{S} \sum_{s,c} \| Au^k_{s,c}-f_c\|^2$,
and the block-wise Potts prior,
$\sum_{s} \omega_s \| \nabla_{d_s}u^k_s\|_0$,
as a function of the iteration index.
To put the results into context, we also include the reconstruction result and 
iteration data of a penalty method for the Potts model which 
relaxes the constrained problem \eqref{eq:PottsSplitted} by replacing the constraints 
by a sequence of softened constraints.
To fix ideas, this method corresponds to Potts ADMM with the Lagrange multipliers set to zero.
Furthermore, we include the Potts superiorized Landweber method (Potts S-Landweber), which corresponds
to Potts S-CG with Landweber steps instead of CG steps. 
Recall that a Landweber step corresponds to a gradient descent step w.r.t.\,the (weighted) least squares problem
\eqref{eq:weightedLSmultichannel}.

In \Cref{fig:PottsADMMvsPottsSCG}, we show the results. 
First, we note that the jumps of all results are spatially aligned across
the channels which illustrates the considerations on the
multi-channel Potts prior in \Cref{sec:MultichannelPottsPrior}.
Concerning the reconstruction quality,
we observe that the penalty method as well as Potts S-Landweber produce spurious artifacts near segment boundaries.
Potts ADMM and Potts S-CG yield improved reconstruction results which are
close to the ground truth.
This confirms the theoretical consideration of \Cref{sec:BasicPottsPerturbing}
that the Landweber method is not a favorable basic algorithm in the sense of superiorization,
when large operator norms are involved.
Moreover, the results confirm that the Lagrange multipliers 
in Potts ADMM (the ``memory'' of the iterations) play a crucial role
for the reconstruction quality.
These qualitative findings are reflected by the graphs in \Cref{fig:PottsADMMvsPottsSCG_Graphs}
which show the values of the block-wise Potts prior \eqref{eq:BlockwisePottsPriorIso}
 and the data deviations over the iterations: 
Potts ADMM and Potts S-CG arrive at values which are close to the values of the ground truth.
This illustrates that the results of Potts ADMM and Potts S-CG
agree with the data and at the same time minimize the (block-wise) Potts prior.

\section{Experimental Results}
\label{sec:Experiments}
In this section, we illustrate the potential of the 
Potts prior for multi-spectral computed tomography. 
More precisely, we consider the Potts S-CG algorithm (Algorithm \ref{alg:superiorizedCG})
and the Potts ADMM algorithm (Algorithm \ref{alg:ADMM})
and compare them with existing TV-type approaches.
First, we provide the necessary implementation details of Potts ADMM and
Potts S-CG. Next, we briefly recall the TV, TNV and dTVp prior.
To obtain a qualitative comparison, we consider the geocore phantom used
in \cite{kazantsev2018joint}.
Finally, we give a quantitative comparison of the methods for a phantom
which consists of few organic materials.

\paragraph{Implementation details.}
We provide the necessary implementation details. 
We begin with Potts ADMM (Algorithm \ref{alg:ADMM}).
As coupling sequences, we employ $\rho_k = 10^{-7}\cdot k^{2.01}$, $\mu_k = \rho_k/S$.
For the stopping criterion, we set $\mathtt{tol}=10^{-5}$.
We solve the subproblems \eqref{eq:Vstep} using Matlab's built-in function
$\mathtt{pcg}$ which is applied to the weighted normal equations 
\eqref{eq:weightedNormalEquations} with standard tolerance $10^{-6}$ and a maximum
number of 2000 iterations. We use the previous iteration $v^{k-1}$ as initial guess so that
after the first ADMM iterations only a few $\mathtt{pcg}$ iterations are needed.
Regarding Potts S-CG (Algorithm \ref{alg:superiorizedCG}), we used the annealing parameter $a=0.999$ and the initial
coupling $\mu_0 = 10^{-4}$.
For the stopping criterion, we set $\mathtt{tol}=10^{-5}$.
The ADMM subproblems \eqref{eqUstep} in Potts ADMM and 
the evaluation of the proximal mappings \eqref{eq:BlockwisePottsProxIso}
in Potts S-CG are solved by using the Potts solver provided by the
Pottslab software toolbox
which makes use of parallelization for multicore CPUs 
(\url{https://github.com/mstorath/Pottslab}).
For 3D-data and large problem sizes an implementation using the GPU becomes reasonable. 
However, such an implementation is beyond the scope of this paper.

\paragraph{Methods for comparison.}
\begin{table}[t]
	\centering
	\def\TableRowSpace{\\[0.5em]}
	\begin{tabular}{lcl}
		\toprule
		Method & Initialization & Stopping criterion \\
		\midrule
		FISTA TV &0 & number of iterations: $k=\textrm{maxIter}$\TableRowSpace
		FISTA TNV &0 & number of iterations: $k=\textrm{maxIter}$\TableRowSpace
		FISTA dTVp &0 & number of iterations: $k=\textrm{maxIter}$\TableRowSpace
		Potts ADMM &0 & distance of variables: $\| u^k_s-u^k_{s+1}\|_\infty < \mathtt{tol}$ \TableRowSpace
		Potts S-CG & $A^TWf$ & distance of variables: $\| u^k_s-u^k_{s+1}\|_\infty < \mathtt{tol}$ \\
		\bottomrule
	\end{tabular}
	\caption{List of initializations and stopping criteria of the considered methods.
		In our experiments, we used the parameters $\textrm{maxIter}=250$, see \cite{kazantsev2018joint},
		and $\mathtt{tol}=10^{-5}$. The methods are quantitatively compared in terms of the achieved
		mean structural similarity index (MSSIM) \eqref{eq:MSSIM}.
		\label{tab:algorithmicParameters}}
\end{table}
We compare the Potts ADMM and the Potts S-CG method with classical channel-wise
total variation (TV), channel-coupling total nuclear variation (TNV)
\cite{rigie2015joint}
and the probabilistic directional TV method (dTVp), which was proposed in 
\cite{kazantsev2018joint}. 
Channel-wise TV corresponds to the $\ell_1$-norm of the discrete gradient 
$\|\nabla u\|_1$ for each channel separately.
Its application amounts to solving the convex problem given by
\begin{equation}
\label{eq:TVmodel}
\argmin_u\sum_{c=1}^{C} \| Au_c - f_c\|_W + \alpha \|\nabla u_c \|_1
\end{equation}
for a parameter $\alpha > 0$.
The TNV regularizer uses the nuclear norm of the discrete Jacobian in each pixel
of a multi-channel image to correlate the channels. Applying TNV
corresponds to the convex problem
\begin{equation}
\label{eq:TNVmodel}
\argmin_u \sum_{c=1}^{C} \|Au_c - f_c\|_W + \sum_{x\in\Omega}\alpha
\|Du(x) \|_\ast,
\end{equation}
where $\alpha >0$ is a parameter, $Du(x)$ is the $2\times C$ matrix of channel-wise finite differences
of $u$ at the point $x$ and $\| \cdot \|_\ast$ denotes its nuclear norm, i.e.,
the sum of its singular vales.
The dTVp regularizer is a modification of the 
directional TV (dTV) regularizer. 
The dTV regularizer enforces correlation between a channel image $w$ and a reference channel $v$
by means of the directional diffusion of the channel given the known reference channel.
To fix ideas, we denote by 
$\mathrm{vec}(w)\in\R^{n^2}$ the column-wise vectorization of $w$ and by
$D_x$ the $2\times n^2$-matrix such that $D_x \mathrm{vec}(w)$ is the finite differences 
vector of $w$ at the point $x$.
Then, the dTV regularizer is given by 
\begin{equation}
\label{eq:dTV}
\mathrm{dTV}(w,v)=\sum_{x\in \Omega'} \|P_v D_x\mathrm{vec}(w)\|_2, \quad
P_v = \begin{cases}
I-\frac{D_x\mathrm{vec}(v)\mathrm{vec}(v)^T D_x^T}{\|D_x \mathrm{vec}(z)\|^2} &\text{ if } D_x\mathrm{vec}(v) \neq 0, \\
I &\text{ if } D_x\mathrm{vec}(v) = 0
\end{cases}
\end{equation}
for a single-channel image $w$ and reference $v$.
The multi-channel version of \eqref{eq:dTV} is given by 
\begin{equation}\label{eq:dTVmultichannel}
\sum_{c=1}^{C} \mathrm{dTV}(u_c,v_c),
\end{equation}
where $v_c$ denote the reference channel for $u_c$.
In contrast to dTV, 
for dTVp the reference channels are not fixed, 
but chosen probabilistically in each iteration of the reconstruction process.
The reference channels are chosen among the channels of the former iterate of $u$.
This process uses a probability mass function, which is based on the channel-wise geometric 
mean of the estimated signal-to-noise ratios (SNR). Hence, 
channels with high SNR are more likely to become a reference channel.
We note that the dTVp regularizer is non-convex \cite{kazantsev2018joint}.

We used the implementation of \cite{kazantsev2018joint},
which employs the FISTA method 
(fast iterative shrinkage-thresholding) \cite{beck2009fast}
to approach the respective minimization problems
(\url{https://github.com/dkazanc/ multi-channel-X-ray-CT}).
The FISTA method converges to the global minimizer for the convex regularizers TV and TNV.
However, this does not necessarily hold for dTVp which is non-convex.
In \Cref{tab:algorithmicParameters}, we list the considered methods together 
with their respective initializations and stopping criteria.

\begin{figure}[t]
	\centering
	\captionsetup[subfigure]{justification=centering}
	\def\figwidth{0.2\textwidth}
	\def\hs{\hspace{1.5em}}
	\begin{subfigure}{\figwidth}
		\includegraphics[width=\textwidth]{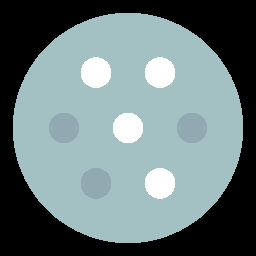}
		\caption*{Ground truth\\~\\~\\~}
	\end{subfigure}\hs
	\begin{subfigure}{\figwidth}
		\includegraphics[width=\textwidth]{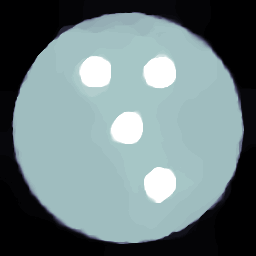}
		\caption*{TV,\\
			RMSE=\protect\input{Experiments/OrganicSpheres/rmse_TV_3.txt}\unskip,\\
			MAE=\protect\input{Experiments/OrganicSpheres/mae_TV_3.txt}\unskip,\\
			MSSIM=\protect\input{Experiments/OrganicSpheres/ssim_TV_3.txt}
			}
	\end{subfigure}\hs
	\begin{subfigure}{\figwidth}
		\includegraphics[width=\textwidth]{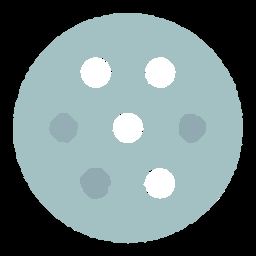}
		\caption*{Potts ADMM,
			RMSE=\protect\input{Experiments/OrganicSpheres/rmse_admm_3.txt}\unskip,\\
			MAE=\protect\input{Experiments/OrganicSpheres/mae_admm_3.txt}\unskip,\\
			MSSIM=\protect\input{Experiments/OrganicSpheres/ssim_admm_3.txt}
		}
	\end{subfigure}
	\caption{\label{fig:RMSElimitations}Comparison of error measures for the reconstruction of a compound body (excerpt from \Cref{fig:OrganicPhantomReconstructions}). 
	Even though TV smooths out some of the spheres, it achieves lower RMSE than Potts ADMM 
	which preserves these spheres. 
	The larger RMSE of Potts ADMM 
	may be attributed to few outliers at the boundaries of the white spheres. 
	In comparison, Potts ADMM achieves lower MAE and higher, i.e., more preferable, MSSIM than TV. This illustrates the sensitivity of RMSE to outliers in the reconstruction.
}
\end{figure}

\paragraph{Assessment of the reconstruction quality.}
We discuss quality measures for reconstructions of compound bodies.
A standard choice corresponds to the root mean square error (RMSE).
For $u,v\in\R^{n\times n}$ it is given by 
\begin{equation}
\textrm{RMSE}(u,v) = \sqrt{\frac{1}{n^2}\sum_{j=1}^{n^2}|u(j)-v(j)|^2}\cdot 100,
\end{equation}
where we use the factor 100 for scaling.
However, it is well-known that error measures based on the $\ell_2$-distance 
such as the RMSE are sensitive to outliers which may only mildly
affect the reconstruction quality.
Outliers consisting of only few pixels can often be removed by a basic post-processing, e.g. 
median filtering.
An alternative distance that is less sensitive to outliers
corresponds to the mean absolute error (MAE) which
is given by 
\begin{equation}
\textrm{MAE}(u,v) = \frac{1}{n^2}\sum_{j=1}^{n^2} |u(j)-v(j)|\cdot100, 
\end{equation}
where the factor 100 is again used for scaling. 
A quality measure which was developed to reflect the visual similarity between images
better than classical pointwise measures
is the mean structural similarity index (MSSIM) \cite{wang2004image}.
It is given by 
\begin{equation}\label{eq:MSSIM}
\textrm{MSSIM}(u,v) = \frac{1}{n^2}\sum_{x\in\Omega'}
\frac{(2\mu_u\mu_v +C_1)(2\sigma_{uv}+C_2)}
{(\mu_u^2 + \mu_v^2+C_1)(\sigma_u^2 + \sigma_v^2+C_2)}
\end{equation}
for the mean intensity $\mu$ and standard deviation $\sigma$ 
of a $11\times 11$-window centered at pixel $x$ weighted by a circular-symmetric Gaussian weighting function with standard deviation 1.5 normalized to unit sum.
The constants are set to $C_1 = 10^{-4}, C_2 = 9\cdot 10^{-4}$.
The MSSIM is bounded from above by 1 and higher values are preferable. 
We use the Matlab implementation of the MSSIM.
Please note that for comparing piecewise constant images, the Rand index 
would be a suitable quality measure as well
since it quantifies directly the similarity between clusters/segmentations
\cite{rand1971objective,arbelaez2010contour}.
However, as TV-based priors typically yield results which are not piecewise constant, 
we refrain from using the Rand index.

In \Cref{fig:RMSElimitations}, we compare RMSE, MAE and MSSIM for the reconstruction of
a piecewise constant image with TV and Potts ADMM. 
We note that TV smoothed out whole regions of the ground truth, while Potts ADMM reconstructs these 
regions and distinguishes them sharply from the surrounding region. 
Nevertheless TV achieves a lower RMSE than Potts ADMM.
On the other hand, Potts ADMM achieves lower MAE and higher MSSIM, respectively, 
than TV as MAE and MSSIM are less sensitive to outliers. 
We attribute the higher RMSE of Potts ADMM 
to only few outliers at the boundaries of some of the white segments. 
Thus, the MSSIM seems to be more suitable than RMSE for comparing reconstruction qualities
in the present setup.
However, we report the MSSIM, MAE and RMSE in our experiments.

\paragraph{Geocore phantom.}
We start out with a qualitative comparison w.r.t.\,the geocore 
phantom of \cite{kazantsev2018joint}.
The phantom has size $512\times 512$ and
consists of four materials: quartz, pyrite, galena, and gold; see \Cref{fig:geocorePhantomMaterials}.
The width of the domain is set 1\,cm, the distance from the X-ray source 
to the rotation center to 3\,cm, the distance from the source to the detector is 5\,cm
and the width of the detector array is 2\,cm.
The X-ray spectrum was simulated with the spektr software package 
\cite{siewerdsen2004spektr}.
The energy spectrum of the emitted photons is shown in \Cref{fig:multiCT}
for the used tube potential (120 kVp) and
photon flux $I_0 = 4\cdot 10^4$.
The photon attenuation coefficients for the materials were obtained 
from the PhotonAttenuation software package \cite{tusz2020photonAtt}.
The energy range is chosen as 45-114 keV and is discretized into 70 energy bins.
The multi-spectral measurements were simulated with the Astra-Toolbox
\cite{van2015astra}. In particular, we set the fan-beam scanning geometry 
and obtained the measurements according to 120 projection angles
from a larger version of the phantom ($1024\times 1024$).
For TV, TNV and dTVp, we used the model parameters as given in \cite{kazantsev2018joint}, i.e.,
$\alpha_{TV}=4.2\cdot10^{-4}$,\,
$\alpha_{TNV}=1.2\cdot10^{-3}$ and 
$\alpha_{dTVp}=3.3\cdot 10^{-3}$.
Potts ADMM used the jump penalty
$\gamma=0.075
$ and Potts S-CG 
the initial perturbation parameter
$\beta_0=500$.

In \Cref{fig:geocorePhantom},
we show the reconstructions for three depicted channels.
We observe that Potts ADMM and Potts S-CG produce sharp boundaries across the channels, while also
showing considerably fewer artifacts than the methods of comparison.
These qualitative findings are confirmed by the corresponding MSSIM values 
(see also \Cref{fig:multiCT})
which are higher for Potts ADMM and Potts S-CG for the majority 
of the channels.
\begin{figure}[t]
	\centering
	\captionsetup[subfigure]{justification=centering}
	\def\figwidth{0.24\textwidth}
	\def\vs{\\[0.5em]}
	\def\hs{\hfill}
	\def\hh{\textwidth}
	\begin{subfigure}{\figwidth}
		\includegraphics[width=\textwidth,height=\hh]{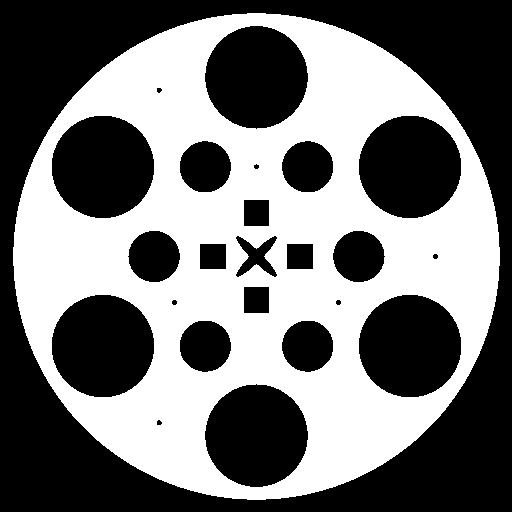}
		\subcaption{Quartz}
	\end{subfigure}\hs
	\begin{subfigure}{\figwidth}
		\includegraphics[width=\textwidth,height=\hh]{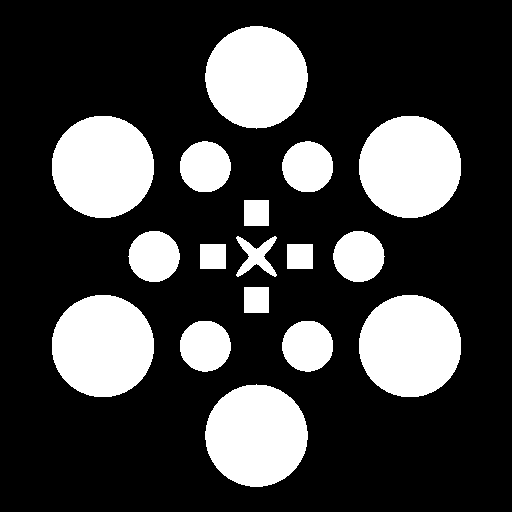}
		\subcaption{Pyrite}
	\end{subfigure}\hs
	\begin{subfigure}{\figwidth}
		\includegraphics[width=\textwidth,height=\hh]{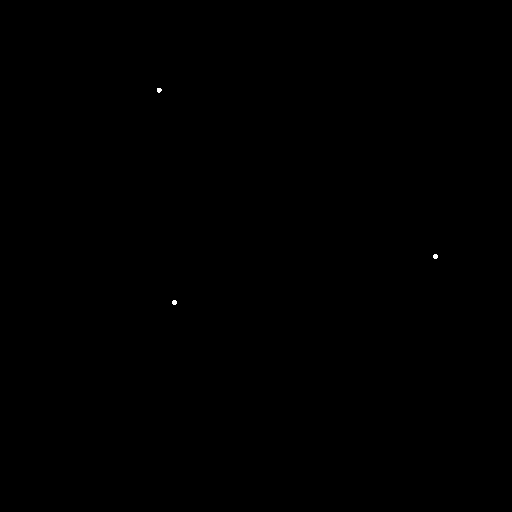}
		\subcaption{Galena}
	\end{subfigure}\hs
	\begin{subfigure}{\figwidth}
		\includegraphics[width=\textwidth,height=\hh]{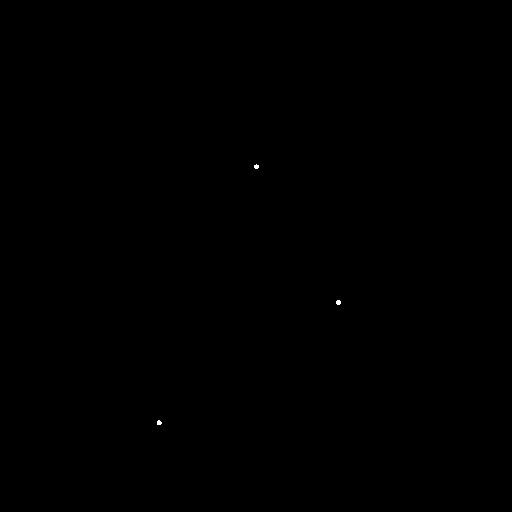}
		\subcaption{Gold}
	\end{subfigure}
	\caption{\label{fig:geocorePhantomMaterials}The geocore phantom consists of four inorganic materials:
		(a) quartz, (b) pyrite, (c) galena and (d) gold. The background is air.}
	~\vs
	\def\figwidth{0.48\textwidth}\centering
	\begin{subfigure}[b]{\figwidth}
		\includegraphics[width=\textwidth]{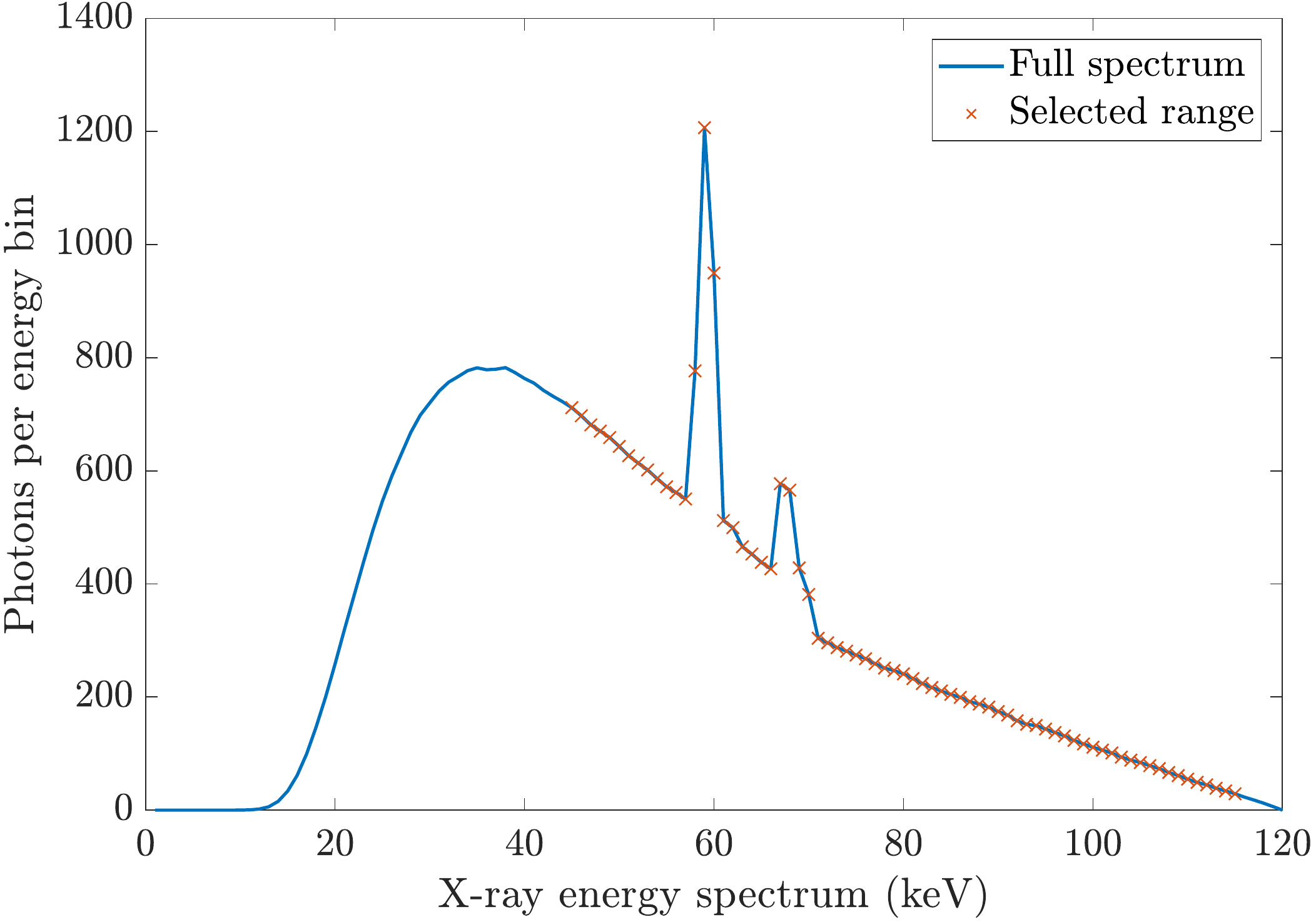}
	\end{subfigure}\hfill
	\begin{subfigure}[b]{\figwidth}	
		\includegraphics[width=\textwidth]{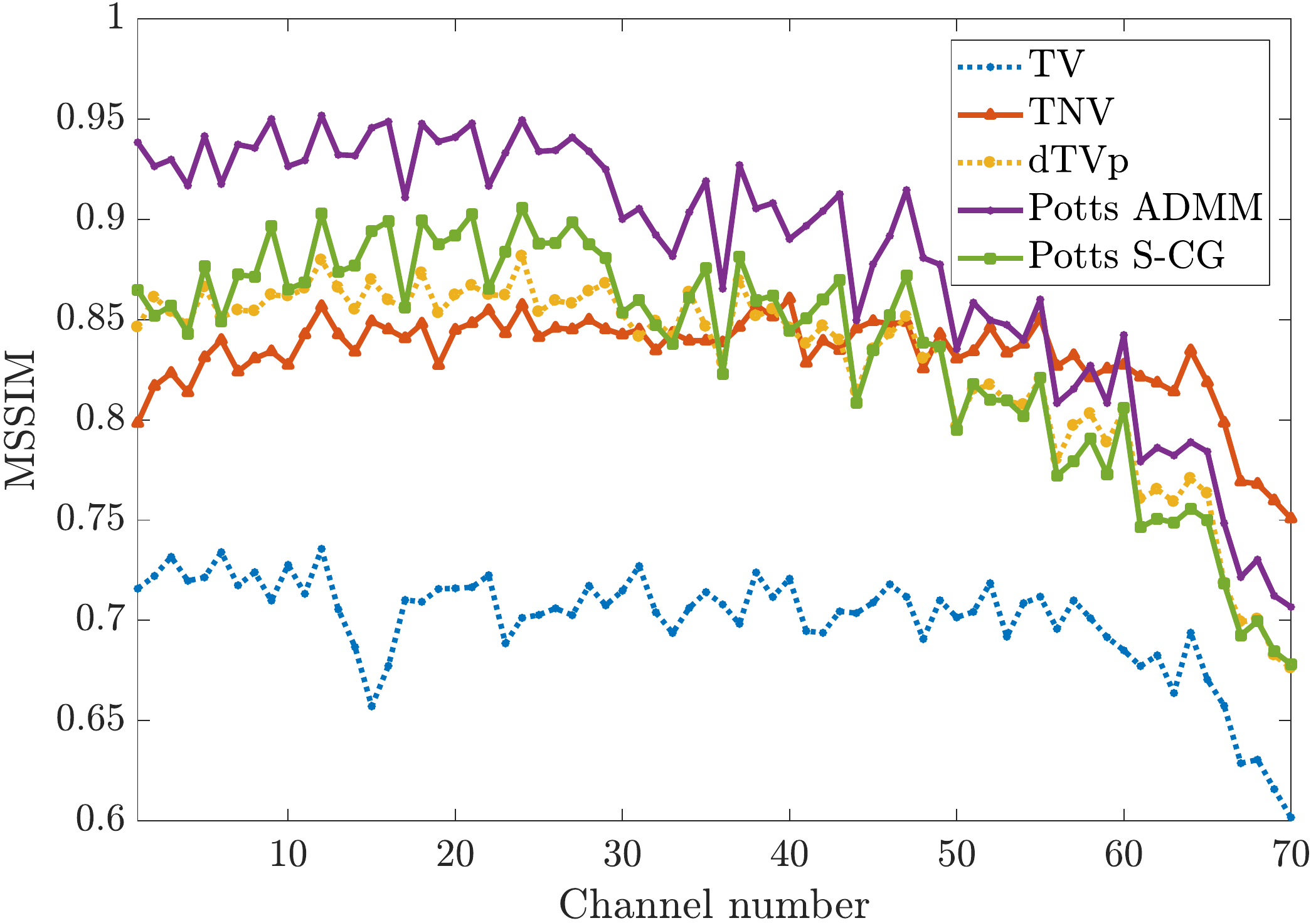}
	\end{subfigure}\\~
	\begin{subfigure}[b]{0.4675\textwidth}	
		\includegraphics[width=\textwidth]{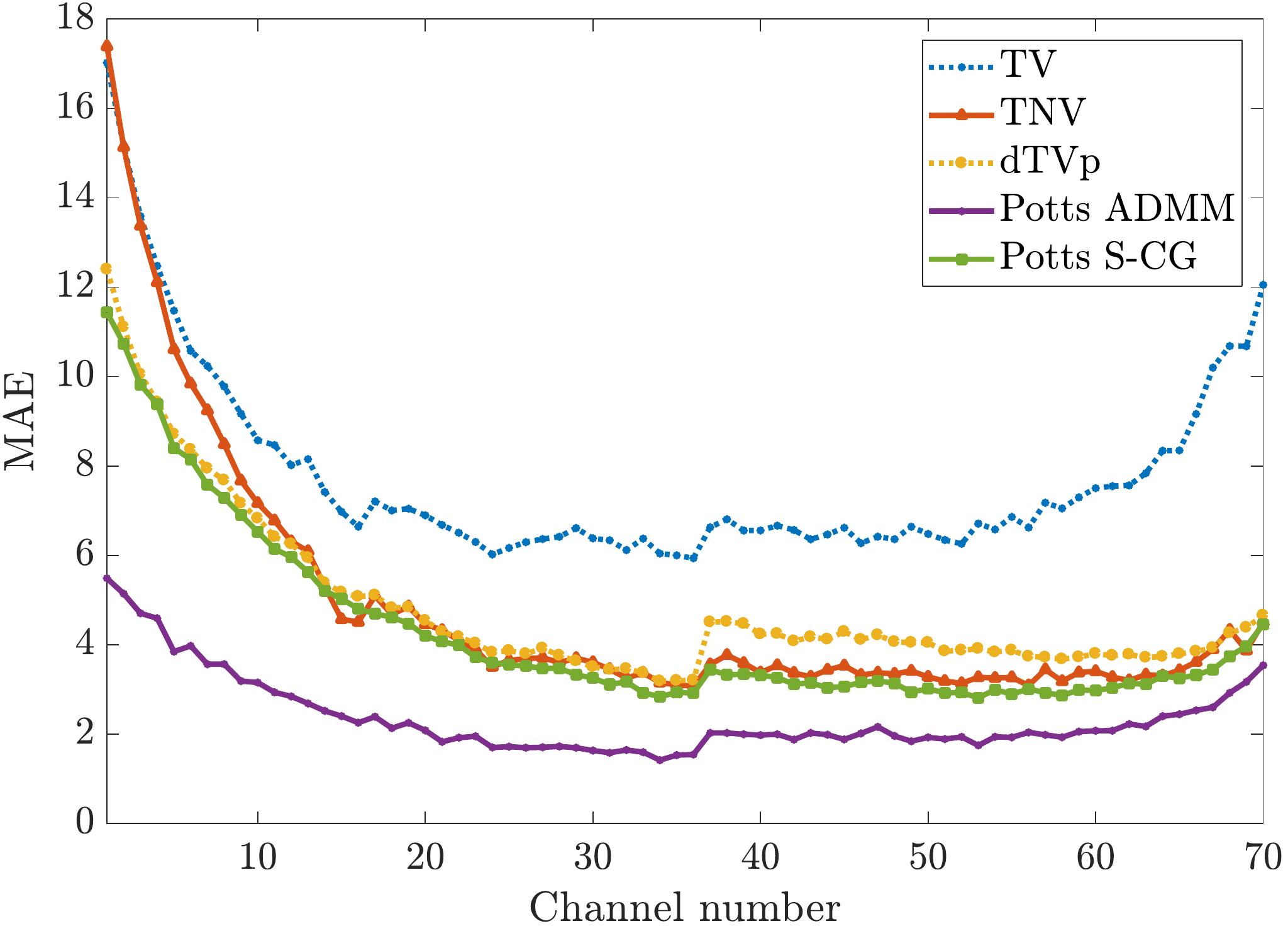}
	\end{subfigure}\hfill
	\begin{subfigure}[b]{\figwidth}	
		\includegraphics[width=\textwidth]{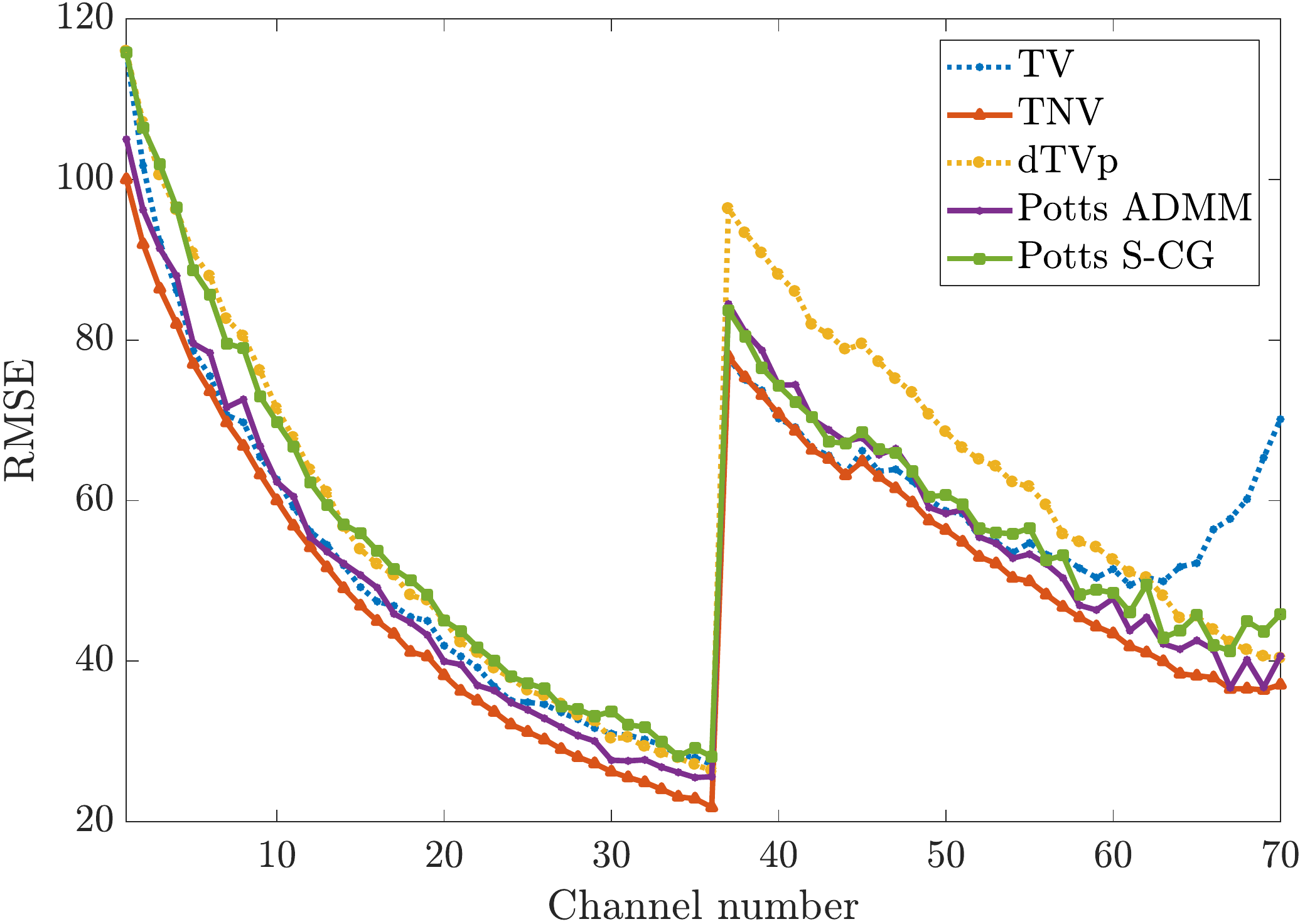}
	\end{subfigure}
	\caption{\label{fig:multiCT}
		\textit{Top left}: The full X-ray spectrum and the range of energies 
		(45-114\,keV) used
		for modeling the measurements of the geocore phantom. The spectrum is discretized into 70 energy bins.
		\textit{Top right}: MSSIM of the reconstructions of 
		the geocore phantom for all energy bins.  
		The channel-coupling methods have larger MSSIM values than channel-wise TV.
		For the majority of the energies
		the result of Potts ADMM and Potts S-CG have higher MSSIM values than
		the TV-type regularizers dTVp and TNV
		(Potts ADMM: 57 channels, Potts S-CG: 38 channels).
		\textit{Bottom: } MAE and RMSE of the reconstructions of the geocore phantom.
			For the highest energies,
			which are particulary prone to noise, the channel-coupling methods have 
			considerably lower MAE and RMSE than channel-wise TV.
			Potts ADMM and Potts S-CG achieve the lowest MAE for most of the energies, while 
			the RMSE of the methods are close for most energies.
			(The jump in the RMSE at the 36th channel may be attributed to a sudden increase of the LAC of gold
			at roughly 80\,keV.)
	}
\end{figure}
\begin{figure}[hp]
	\centering
	\captionsetup[subfigure]{justification=centering}
	\def\figwidth{0.17\textwidth}
	\def\vs{\\[0.2em]}
	\def\hs{\hspace{0.75em}}
	\def\hh{\textwidth}
	\def\rb{28pt}
	\begin{subfigure}{\figwidth}
		\caption*{\textbf{45 keV}}
		\makebox[0pt][r]{\makebox[1.25em]{\raisebox{\rb}{\rotatebox[origin=c]{90}
					{\scriptsize \textbf{Ground truth}}}}}%
		{\includegraphics[width=\textwidth,height=\hh]{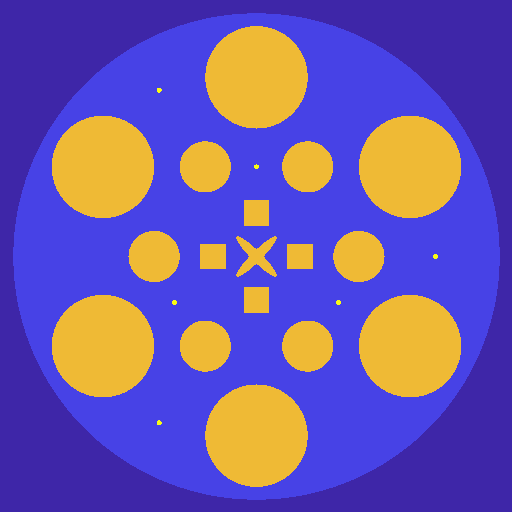}}
	\end{subfigure}\hs
	\begin{subfigure}{\figwidth}
		\caption*{\textbf{80 keV}}
		\includegraphics[width=\textwidth,height=\hh]{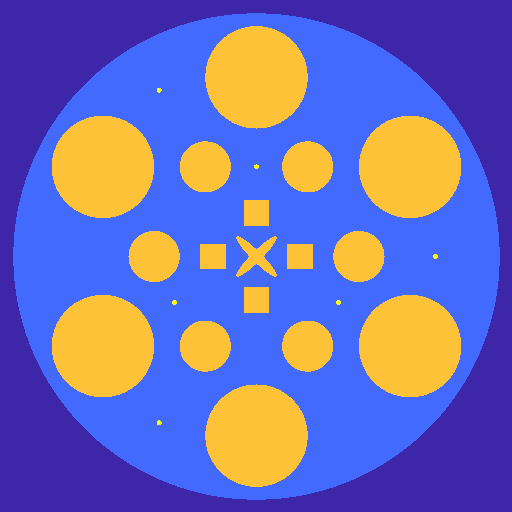}
	\end{subfigure}\hs
	\begin{subfigure}{\figwidth}
		\caption*{\textbf{114 keV}}
		\includegraphics[width=\textwidth,height=\hh]{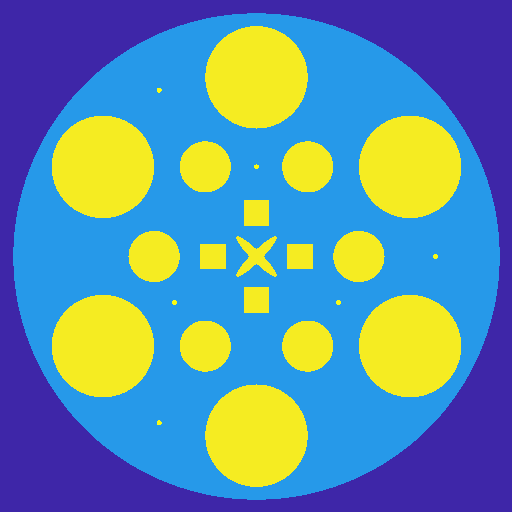}
	\end{subfigure}\\[1.25em]
	\begin{subfigure}{\figwidth}
		\makebox[0pt][r]{\makebox[1.25em]{\raisebox{\rb}{\rotatebox[origin=c]{90}
					{\scriptsize \textbf{TV}}}}}%
		\includegraphics[width=\textwidth,height=\hh]{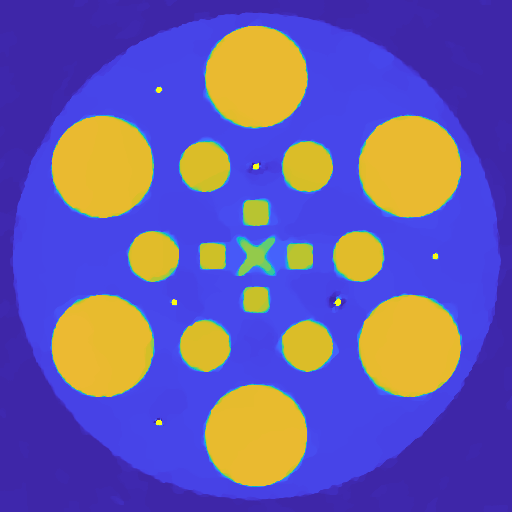}
		\subcaption*{MSSIM=\protect\input{Experiments/GeocorePhantom/ssim_tv_1.txt}
			\unskip,
			RMSE=\protect\input{Experiments/GeocorePhantom/rmse_tv_1.txt}\unskip,
			MAE=\protect\input{Experiments/GeocorePhantom/mae_tv_1.txt}
		}
	\end{subfigure}\hs
	\begin{subfigure}{\figwidth}
		\includegraphics[width=\textwidth,height=\hh]{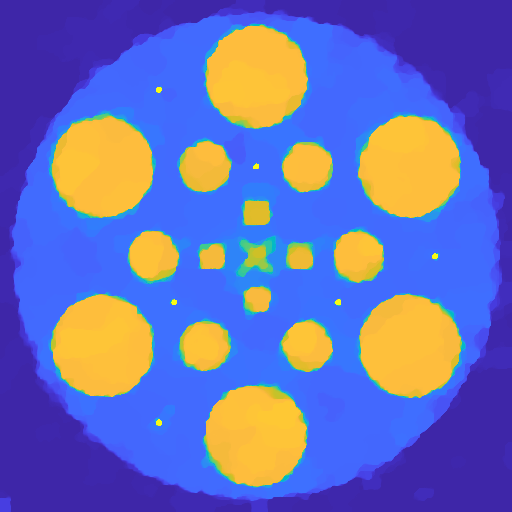}
		\subcaption*{MSSIM=\protect\input{Experiments/GeocorePhantom/ssim_tv_2.txt}
			\unskip,
			RMSE=\protect\input{Experiments/GeocorePhantom/rmse_tv_2.txt}\unskip,
			MAE=\protect\input{Experiments/GeocorePhantom/mae_tv_2.txt}
		}
	\end{subfigure}\hs
	\begin{subfigure}{\figwidth}
		\includegraphics[width=\textwidth,height=\hh]{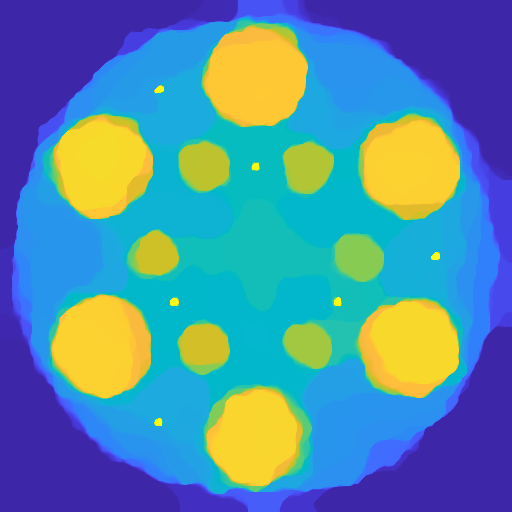}
		\subcaption*{MSSIM=\protect\input{Experiments/GeocorePhantom/ssim_tv_3.txt}
			\unskip,
			RMSE=\protect\input{Experiments/GeocorePhantom/rmse_tv_3.txt}\unskip,
			MAE=\protect\input{Experiments/GeocorePhantom/mae_tv_3.txt}
		}
	\end{subfigure}\vs
	\begin{subfigure}{\figwidth}
		\makebox[0pt][r]{\makebox[1.25em]{\raisebox{\rb}{\rotatebox[origin=c]{90}
					{\scriptsize \textbf{TNV}}}}}%
		\includegraphics[width=\textwidth,height=\hh]{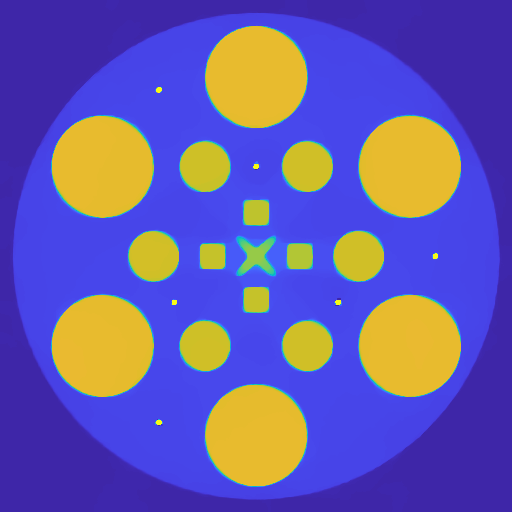}
		\subcaption*{MSSIM=\protect\input{Experiments/GeocorePhantom/ssim_tnv_1.txt}
			\unskip,
			RMSE=\protect\input{Experiments/GeocorePhantom/rmse_tnv_1.txt}\unskip,
			MAE=\protect\input{Experiments/GeocorePhantom/mae_tnv_1.txt}
		}
	\end{subfigure}\hs
	\begin{subfigure}{\figwidth}
		\includegraphics[width=\textwidth,height=\hh]{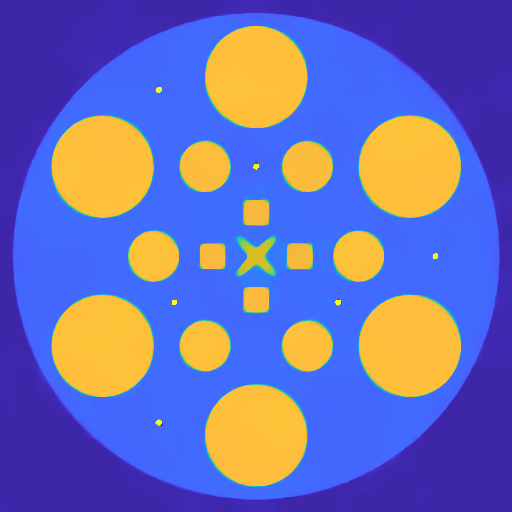}
		\subcaption*{MSSIM=\protect\input{Experiments/GeocorePhantom/ssim_tnv_2.txt}
			\unskip,
			RMSE=\protect\input{Experiments/GeocorePhantom/rmse_tnv_2.txt}\unskip,
			MAE=\protect\input{Experiments/GeocorePhantom/mae_tnv_2.txt}
		}
	\end{subfigure}\hs
	\begin{subfigure}{\figwidth}
		\includegraphics[width=\textwidth,height=\hh]{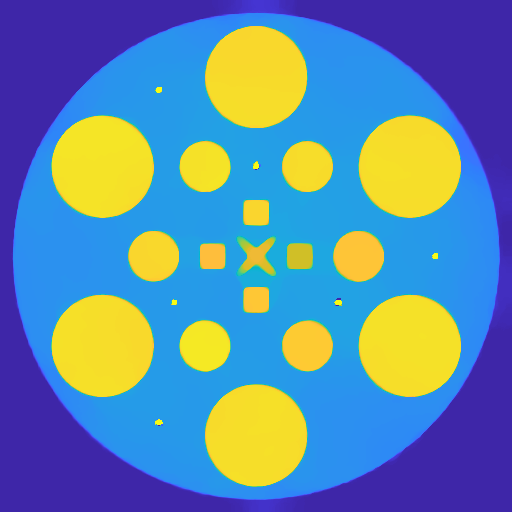}
		\subcaption*{MSSIM=\protect\input{Experiments/GeocorePhantom/ssim_tnv_3.txt}
			\unskip,
			RMSE=\protect\input{Experiments/GeocorePhantom/rmse_tnv_3.txt}\unskip,
			MAE=\protect\input{Experiments/GeocorePhantom/mae_tnv_3.txt}
		}
	\end{subfigure}\vs
	\begin{subfigure}{\figwidth}
		\makebox[0pt][r]{\makebox[1.25em]{\raisebox{\rb}{\rotatebox[origin=c]{90}
					{\scriptsize \textbf{dTVp}  }}}}%
		\includegraphics[width=\textwidth,height=\hh]{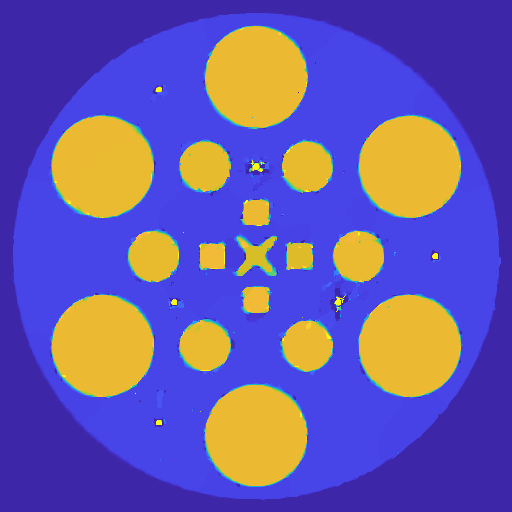}
		\subcaption*{MSSIM=\protect\input{Experiments/GeocorePhantom/ssim_dtvp_1.txt}
			\unskip,
			RMSE=\protect\input{Experiments/GeocorePhantom/rmse_dtvp_1.txt}\unskip,
			MAE=\protect\input{Experiments/GeocorePhantom/mae_dtvp_1.txt}
		}
	\end{subfigure}\hs
	\begin{subfigure}{\figwidth}
		\includegraphics[width=\textwidth,height=\hh]{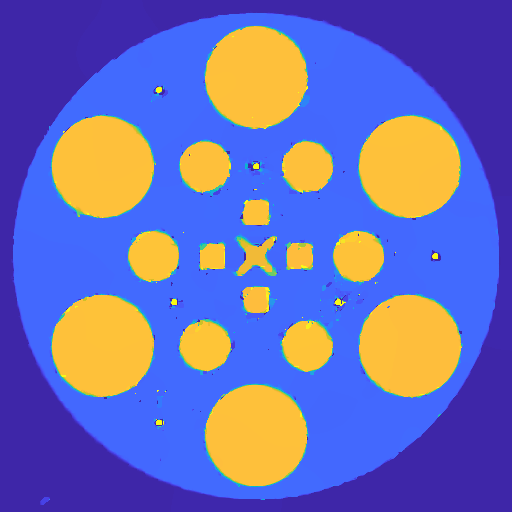}
		\subcaption*{MSSIM=\protect\input{Experiments/GeocorePhantom/ssim_dtvp_2.txt}
			\unskip,
			RMSE=\protect\input{Experiments/GeocorePhantom/rmse_dtvp_2.txt}\unskip,
			MAE=\protect\input{Experiments/GeocorePhantom/mae_dtvp_2.txt}
		}
	\end{subfigure}\hs
	\begin{subfigure}{\figwidth}
		\includegraphics[width=\textwidth,height=\hh]{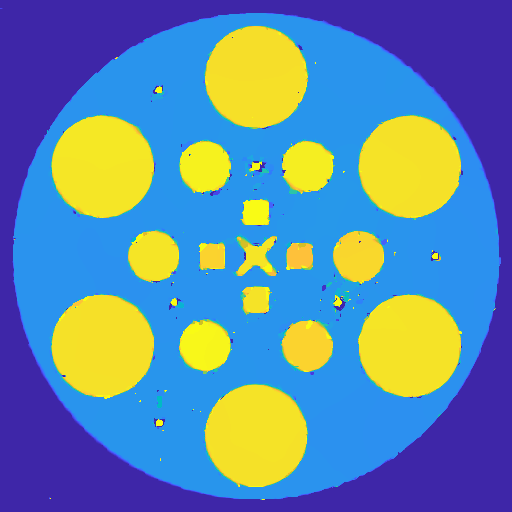}
		\subcaption*{MSSIM=\protect\input{Experiments/GeocorePhantom/ssim_dtvp_3.txt}
			\unskip,
			RMSE=\protect\input{Experiments/GeocorePhantom/rmse_dtvp_3.txt}\unskip,
			MAE=\protect\input{Experiments/GeocorePhantom/mae_dtvp_3.txt}
		}
	\end{subfigure}\vs
	\begin{subfigure}{\figwidth}
		\makebox[0pt][r]{\makebox[1.25em]{\raisebox{\rb}{\rotatebox[origin=c]{90}
					{\scriptsize\textbf{Potts ADMM}}}}}%
		\includegraphics[width=\textwidth,height=\hh]{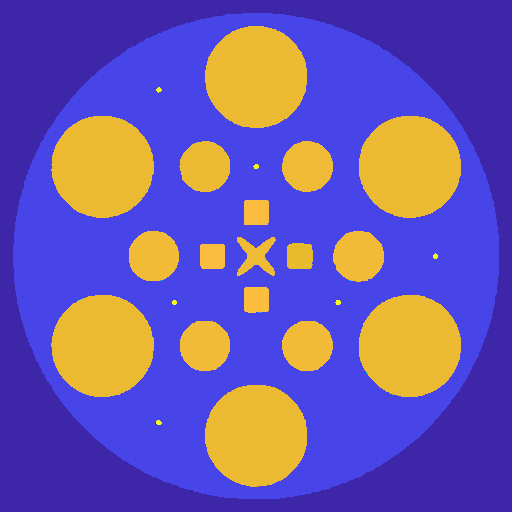}
		\subcaption*{MSSIM=\protect\input{Experiments/GeocorePhantom/ssim_ours_1.txt}
			\unskip,
			RMSE=\protect\input{Experiments/GeocorePhantom/rmse_ours_1.txt}\unskip,
			MAE=\protect\input{Experiments/GeocorePhantom/mae_ours_1.txt}
		}
	\end{subfigure}\hs
	\begin{subfigure}{\figwidth}
		\includegraphics[width=\textwidth,height=\hh]{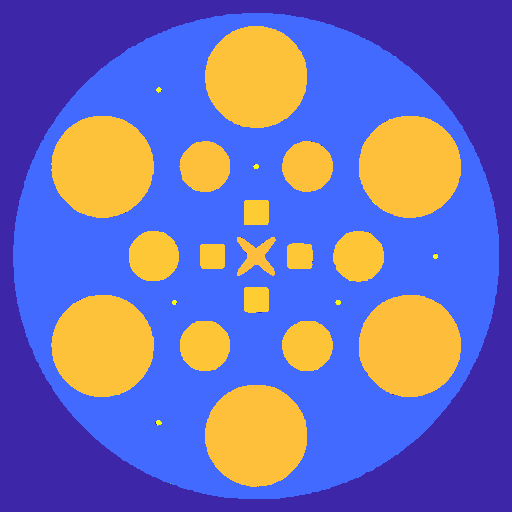}
		\subcaption*{MSSIM=\protect\input{Experiments/GeocorePhantom/ssim_ours_2.txt}
			\unskip,
			RMSE=\protect\input{Experiments/GeocorePhantom/rmse_ours_2.txt}\unskip,
			MAE=\protect\input{Experiments/GeocorePhantom/mae_ours_2.txt}
		}
	\end{subfigure}\hs
	\begin{subfigure}{\figwidth}
		\includegraphics[width=\textwidth,height=\hh]{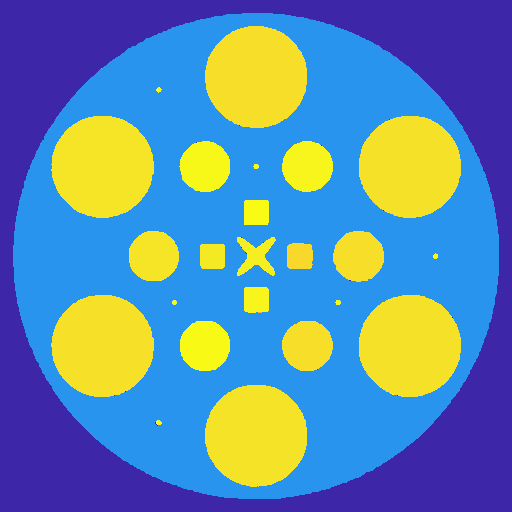}
		\subcaption*{MSSIM=\protect\input{Experiments/GeocorePhantom/ssim_ours_3.txt}
			\unskip,
			RMSE=\protect\input{Experiments/GeocorePhantom/rmse_ours_3.txt}\unskip,
			MAE=\protect\input{Experiments/GeocorePhantom/mae_ours_3.txt}
		}
	\end{subfigure}\vs
	\begin{subfigure}{\figwidth}
		\makebox[0pt][r]{\makebox[1.25em]{\raisebox{\rb}{\rotatebox[origin=c]{90}
					{\scriptsize \textbf{Potts S-CG}}}}}%
		\includegraphics[width=\textwidth,height=\hh]{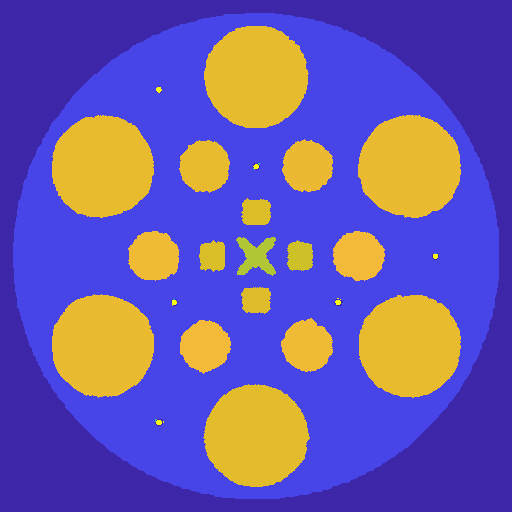}
		\subcaption*{MSSIM=\protect\input{Experiments/GeocorePhantom/ssim_sup_1.txt}
			\unskip,
			RMSE=\protect\input{Experiments/GeocorePhantom/rmse_sup_1.txt}\unskip,
			MAE=\protect\input{Experiments/GeocorePhantom/mae_sup_1.txt}
		}
	\end{subfigure}\hs
	\begin{subfigure}{\figwidth}
		\includegraphics[width=\textwidth,height=\hh]{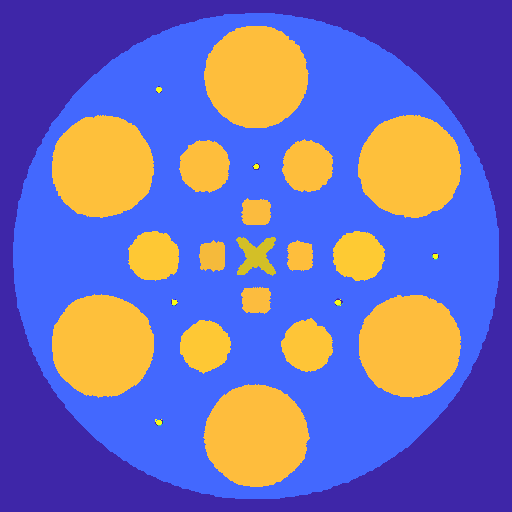}
		\subcaption*{MSSIM=\protect\input{Experiments/GeocorePhantom/ssim_sup_2.txt}
			\unskip,
			RMSE=\protect\input{Experiments/GeocorePhantom/rmse_sup_2.txt}\unskip,
			MAE=\protect\input{Experiments/GeocorePhantom/mae_sup_2.txt}
		}
	\end{subfigure}\hs
	\begin{subfigure}{\figwidth}
		\includegraphics[width=\textwidth,height=\hh]{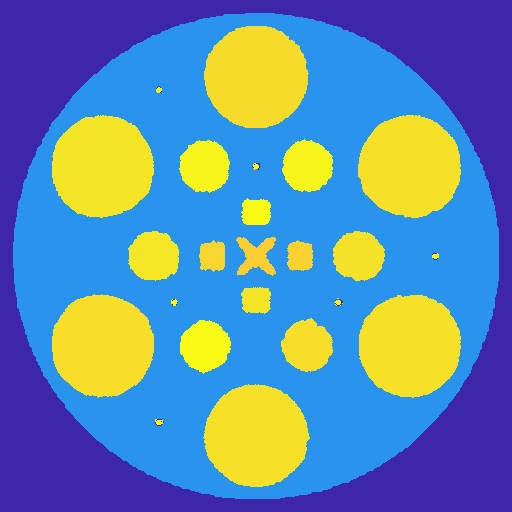}
		\subcaption*{MSSIM=\protect\input{Experiments/GeocorePhantom/ssim_sup_3.txt}
			\unskip,
			RMSE=\protect\input{Experiments/GeocorePhantom/rmse_sup_3.txt}\unskip,
			MAE=\protect\input{Experiments/GeocorePhantom/mae_sup_3.txt}
		}
	\end{subfigure}\vs
	\caption[Reconstruction results of the geocore phantom for three selected energy bins.]
	{Reconstruction results of the geocore phantom for three selected energy bins. 
		Channel-wise TV exhibits artifacts at the segment boundaries in the second channel
		and does not recover the inner segments in the third channel.
		TNV yields an improved reconstruction, but still shows (blueish) 
		artifacts at the segment boundaries. 
		The dTVp result has sharper boundaries than TV and TNV, 
		but the artifacts at the segment boundaries are still present.
		The Potts ADMM and the Potts S-CG results show less artifacts and the boundaries are sharp.
		Further, they achieve the highest MSSIM values
		for two out of the three depicted channels, which is representative for the full
		energy spectrum; see \Cref{fig:multiCT}.
		\label{fig:geocorePhantom}}
\end{figure}

\paragraph{Organic spheres phantom.}
\begin{figure}[ht]
	\centering
	\captionsetup[subfigure]{justification=centering}
	\def\figwidth{0.25\textwidth}
	\def\vs{\\[0.2em]}
	\def\hs{\hspace{1.25em}}
	\def\hh{\textwidth}
	\begin{subfigure}{\figwidth}
		\includegraphics[width=\textwidth,height=\hh]{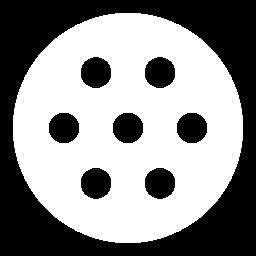}
		\subcaption{Muscle tissue}
	\end{subfigure}\hs
	\begin{subfigure}{\figwidth}
		\includegraphics[width=\textwidth,height=\hh]{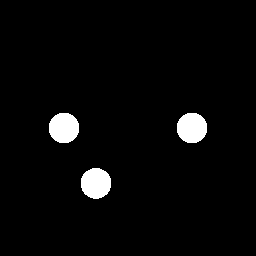}
		\subcaption{Fat tissue}
	\end{subfigure}\hs
	\begin{subfigure}{\figwidth}
		\includegraphics[width=\textwidth,height=\hh]{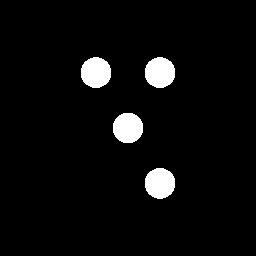}
		\subcaption{Bone tissue}
	\end{subfigure}
	\caption{\label{fig:OrganicSpheresPhantom}The organic spheres phantom consists of three materials.
		(a) Muscle tissue, (b) fat tissue and (c) bone tissue.
		The background is air. 
		The main challenge is to separate the regions with muscle tissue
		and those with fat tissue as fat and muscle have very similar LAC's throughout 
		the considered energy spectrum; see Figure \ref{fig:OrganicSpheresSpectrum}.}
	~\vs
	\def\figwidth{0.51\textwidth}
	\includegraphics[width=\figwidth]{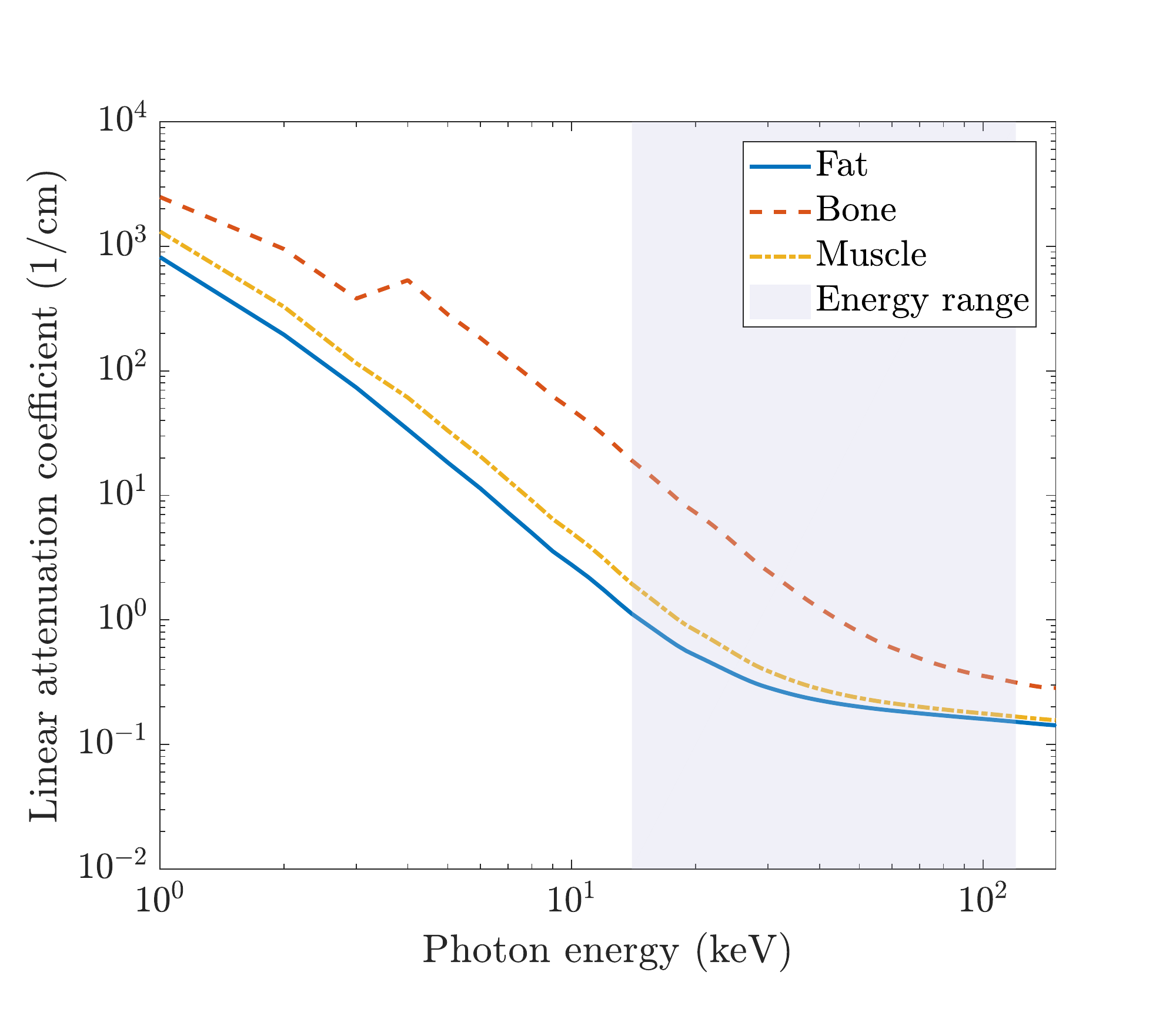}	
	\hspace{-1.25em}
	\includegraphics[width=\figwidth]{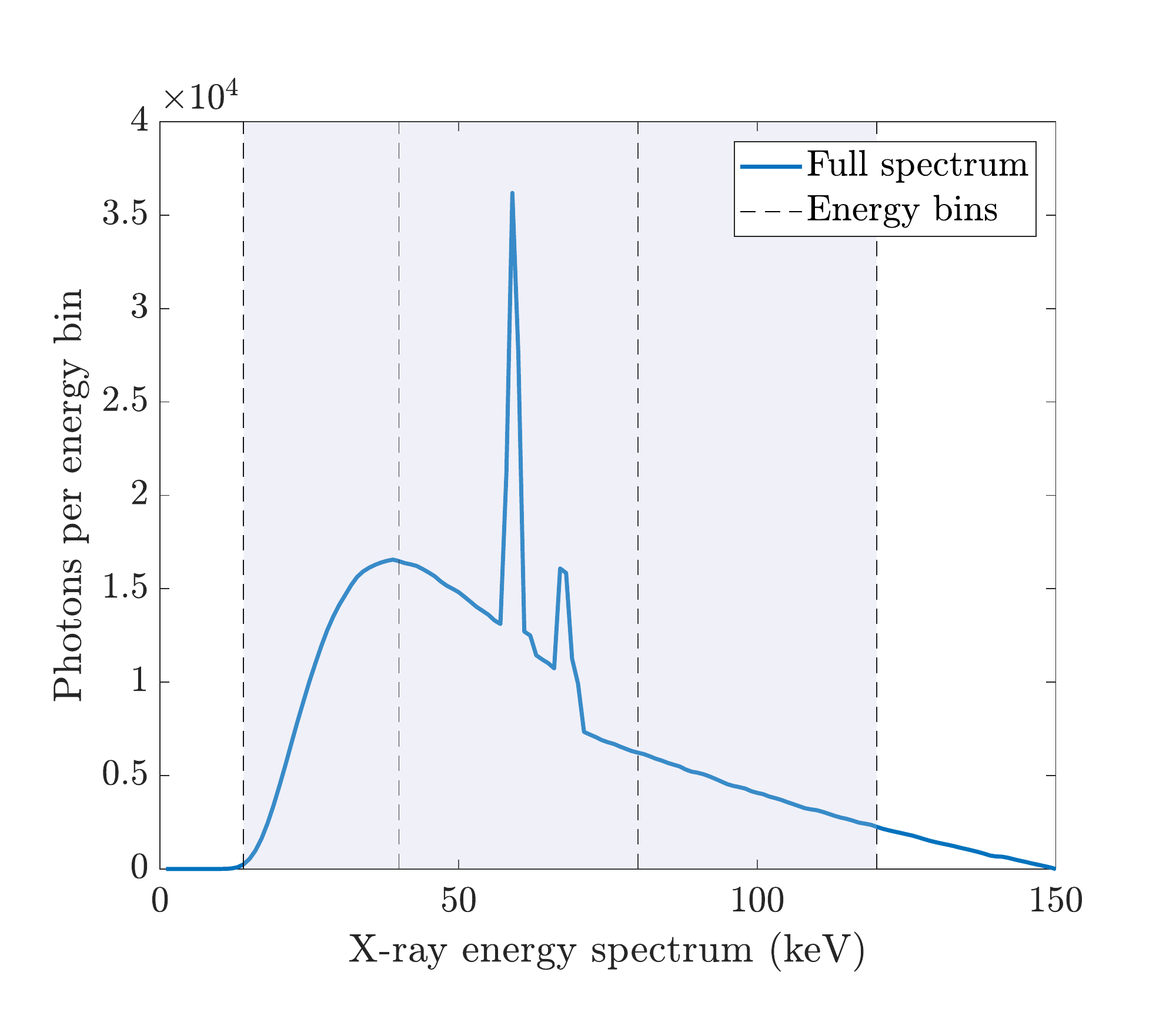}
	\caption{\label{fig:OrganicSpheresSpectrum}Modeling the measurements of the organic spheres
		phantom. \textit{Left}: the LAC's of fat, bone and muscle
		tissue. The highlighted area depicts the used energy range.
		The LAC curves of fat and muscle are close.
		\textit{Right}: the full X-ray spectrum and the used energy range  
		(15-120\,keV) for modeling the data of the organic spheres phantom.
		The spectrum is binned into three energy bins by the detectors and their boundaries 
		are depicted by the dashed vertical lines.
	}
\end{figure}
In the last paragraph, we
considered the geocore phantom, which consisted of inorganic materials, many energy channels and a
moderate number of projections were available.
Next, we conduct a quantitative comparison in terms of a phantom,
which consists of organic materials and only few energy bins and projection angles 
are available.
To this end, we consider the $256\times 256$ phantom of \Cref{fig:OrganicSpheresPhantom}, which consists of homogeneous regions
filled with muscle, fat and bone tissue.
The domain and detector width as well as the distances involved in the measurement
process are the same as for the geocore phantom.
The X-ray spectrum as well as the LAC's of muscle, fat and bone 
were simulated with the spektr software package
for the used tube potential (150 kVp) and
photon flux $I_0 = 10^5$.
In this context, we note that the measurements were simulated according to
\eqref{eq:DiscreteMeasurements} to avoid the inverse crime. 
Then, for the reconstruction, the linear model \eqref{eq:DiscreteMeasurementsSimplified} was used.
We show the LAC's of the involved materials and the X-ray spectrum in \Cref{fig:OrganicSpheresSpectrum}. 
The detectors bin the X-ray spectrum into three energy bins.
We note that the LAC's of fat and muscle tissue are very close, especially for
higher energies. 
Thus, the reconstruction approaches should aid 
the recovery of the third channel
(the one which holds highest energies among the three channels)
by employing the other two (less problematic) channels.
Again, we simulate the measurements with the Astra-toolbox and use a fan-beam scanning geometry
with 25\unskip~projection angles 
(obtained from a larger $512\times 512$ version of the phantom).
Hence, the reconstruction approaches have to deal with the additional challenge of 
highly undersampled measurements.

The individual model parameters were chosen empirically.
More precisely, we determined the model parameters
by choosing the optimal ones w.r.t.\,to the mean value of the
channel-wise MSSIM's.
In particular, for TV and TNV, the optimal parameter $\alpha$ was chosen among $\{10^{-4},1.5\cdot 10^{-4},\ldots,10^{-3}\}$.
As dTVp is of probabilistic nature,
we repeated the computation for each parameter $\alpha$ five times and chose the best result.
Here, we let $\alpha \in \{10^{-4},2\cdot 10^{-4},\ldots,2\cdot 10^{-3}\}$.
The jump penalty of Potts ADMM was chosen from $\gamma\in \{10^{-5},2\cdot10^{-5},\ldots, 2\cdot 10^{-4}\}$
and for Potts S-CG, the initial perturbation parameter was obtained from $\beta_0 \in \{1,\ldots,20\}$.

In \Cref{fig:OrganicPhantomReconstructions}, we show the reconstructions
together with the corresponding (optimal) MSSIM.
We observe that  Potts ADMM and Potts S-CG 
achieve the highest MSSIM values
for all three channels and the MSSIM values 
of Potts ADMM and Potts S-CG are rather close.
Furthermore, the fact that Potts ADMM and Potts S-CG achieve 
higher MSSIM values than the other methods is 
reflected by the reconstructions:
the result of channel-wise TV exhibits blurry boundaries and
the segments corresponding to the fat tissue are not recovered in the third channel.
TNV provides an improved reconstruction of the third channel. However,
the segment boundaries remain diffuse. 
The dTVp method produces sharp boundaries in all three channels, but
introduces some spurious artifacts near the boundaries.
Potts ADMM and Potts S-CG 
recover the segments and provide sharp boundaries throughout the channels.
Further, they show fewer artifacts.

\begin{figure}[hp]
	\centering
	\captionsetup[subfigure]{justification=centering}
	\def\figwidth{0.175\textwidth}
	\def\vs{\\[0.2em]}
	\def\hs{\hspace{0.75em}}
	\def\hh{\textwidth}
	\def\rb{30pt}
	\begin{subfigure}{\figwidth}
		\caption*{\textbf{15-40\,keV}\vspace{-0.5em}}
		\makebox[0pt][r]{\makebox[1.25em]{\raisebox{\rb}{\rotatebox[origin=c]{90}
					{\scriptsize \textbf{Ground truth}}}}}%
		{\includegraphics[width=\textwidth,height=\hh]{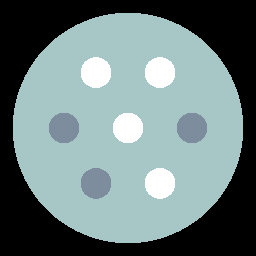}}
	\end{subfigure}\hs
	\begin{subfigure}{\figwidth}
		\caption*{\textbf{41-80\,keV}\vspace{-0.5em}}
		\includegraphics[width=\textwidth,height=\hh]{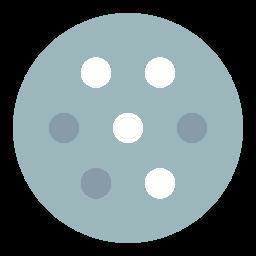}
	\end{subfigure}\hs
	\begin{subfigure}{\figwidth}
		\caption*{\textbf{81-120\,keV}\vspace{-0.5em}}
		\includegraphics[width=\textwidth,height=\hh]{Experiments/OrganicSpheres/GT_3.png}
	\end{subfigure}\\[1.25em]
	\begin{subfigure}{\figwidth}
		\makebox[0pt][r]{\makebox[1.25em]{\raisebox{\rb}{\rotatebox[origin=c]{90}
			{\scriptsize \textbf{TV}}}}}%
		\includegraphics[width=\textwidth,height=\hh]{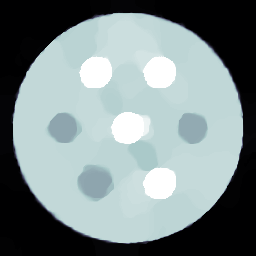}
		\subcaption*{MSSIM=\protect\input{Experiments/OrganicSpheres/ssim_TV_1.txt}
			\unskip,
			RMSE=\protect\input{Experiments/OrganicSpheres/rmse_TV_1.txt}\unskip,
			MAE=\protect\input{Experiments/OrganicSpheres/mae_TV_1.txt}
		}
	\end{subfigure}\hs
	\begin{subfigure}{\figwidth}
		\includegraphics[width=\textwidth,height=\hh]{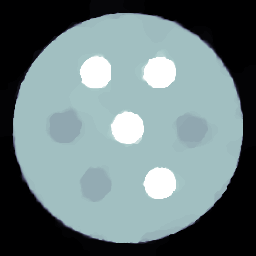}
		\subcaption*{MSSIM=\protect\input{Experiments/OrganicSpheres/ssim_TV_2.txt}
			\unskip,
			RMSE=\protect\input{Experiments/OrganicSpheres/rmse_TV_2.txt}\unskip,
			MAE=\protect\input{Experiments/OrganicSpheres/mae_TV_2.txt}
		}
	\end{subfigure}\hs
	\begin{subfigure}{\figwidth}
		\includegraphics[width=\textwidth,height=\hh]{Experiments/OrganicSpheres/result_TV_3.png}
		\subcaption*{MSSIM=\protect\input{Experiments/OrganicSpheres/ssim_TV_3.txt}
			\unskip,
			RMSE=\protect\input{Experiments/OrganicSpheres/rmse_TV_3.txt}\unskip,
			MAE=\protect\input{Experiments/OrganicSpheres/mae_TV_3.txt}
		}
	\end{subfigure}\vs
	\begin{subfigure}{\figwidth}
		\makebox[0pt][r]{\makebox[1.25em]{\raisebox{\rb}{\rotatebox[origin=c]{90}
			{\scriptsize \textbf{TNV}}}}}%
		\includegraphics[width=\textwidth,height=\hh]{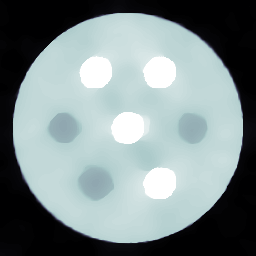}
		\subcaption*{MSSIM=\protect\input{Experiments/OrganicSpheres/ssim_TNV_1.txt}
			\unskip,
			RMSE=\protect\input{Experiments/OrganicSpheres/rmse_TNV_1.txt}\unskip,
			MAE=\protect\input{Experiments/OrganicSpheres/mae_TNV_1.txt}
		}
	\end{subfigure}\hs
	\begin{subfigure}{\figwidth}
		\includegraphics[width=\textwidth,height=\hh]{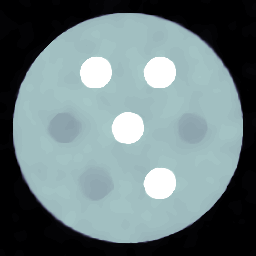}
		\subcaption*{MSSIM=\protect\input{Experiments/OrganicSpheres/ssim_TNV_2.txt}
			\unskip,
			RMSE=\protect\input{Experiments/OrganicSpheres/rmse_TNV_2.txt}\unskip,
			MAE=\protect\input{Experiments/OrganicSpheres/mae_TNV_2.txt}
		}
	\end{subfigure}\hs
	\begin{subfigure}{\figwidth}
		\includegraphics[width=\textwidth,height=\hh]{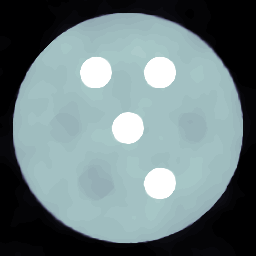}
		\subcaption*{MSSIM=\protect\input{Experiments/OrganicSpheres/ssim_TNV_3.txt}
			\unskip,
			RMSE=\protect\input{Experiments/OrganicSpheres/rmse_TNV_3.txt}\unskip,
			MAE=\protect\input{Experiments/OrganicSpheres/mae_TNV_3.txt}
		}
	\end{subfigure}\vs
	\begin{subfigure}{\figwidth}
		\makebox[0pt][r]{\makebox[1.25em]{\raisebox{\rb}{\rotatebox[origin=c]{90}
			{\scriptsize \textbf{dTVp}  }}}}%
		\includegraphics[width=\textwidth,height=\hh]{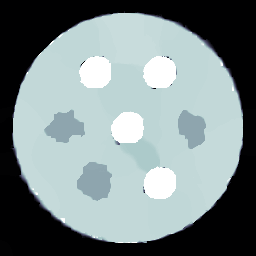}
		\subcaption*{MSSIM=\protect\input{Experiments/OrganicSpheres/ssim_dTVp_1.txt}
			\unskip,
			RMSE=\protect\input{Experiments/OrganicSpheres/rmse_dTVp_1.txt}\unskip,
			MAE=\protect\input{Experiments/OrganicSpheres/mae_dTVp_1.txt}
		}
	\end{subfigure}\hs
	\begin{subfigure}{\figwidth}
		\includegraphics[width=\textwidth,height=\hh]{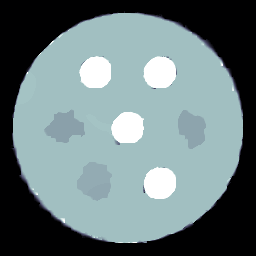}
		\subcaption*{MSSIM=\protect\input{Experiments/OrganicSpheres/ssim_dTVp_2.txt}
			\unskip,
			RMSE=\protect\input{Experiments/OrganicSpheres/rmse_dTVp_2.txt}\unskip,
			MAE=\protect\input{Experiments/OrganicSpheres/mae_dTVp_2.txt}
		}
	\end{subfigure}\hs
	\begin{subfigure}{\figwidth}
		\includegraphics[width=\textwidth,height=\hh]{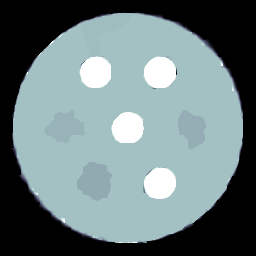}
		\subcaption*{MSSIM=\protect\input{Experiments/OrganicSpheres/ssim_dTVp_3.txt}
			\unskip,
			RMSE=\protect\input{Experiments/OrganicSpheres/rmse_dTVp_3.txt}\unskip,
			MAE=\protect\input{Experiments/OrganicSpheres/mae_dTVp_3.txt}
		}
	\end{subfigure}\vs
	\begin{subfigure}{\figwidth}
		\makebox[0pt][r]{\makebox[1.25em]{\raisebox{\rb}{\rotatebox[origin=c]{90}
			{\scriptsize\textbf{Potts ADMM}}}}}%
		\includegraphics[width=\textwidth,height=\hh]{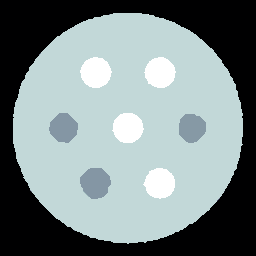}
		\subcaption*{MSSIM=\protect\input{Experiments/OrganicSpheres/ssim_ADMM_1.txt}
			\unskip,
			RMSE=\protect\input{Experiments/OrganicSpheres/rmse_ADMM_1.txt}\unskip,
			MAE=\protect\input{Experiments/OrganicSpheres/mae_ADMM_1.txt}
		}
	\end{subfigure}\hs
	\begin{subfigure}{\figwidth}
		\includegraphics[width=\textwidth,height=\hh]{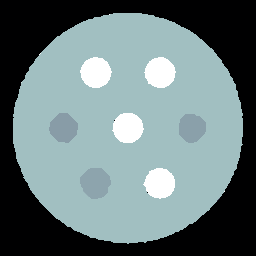}
		\subcaption*{MSSIM=\protect\input{Experiments/OrganicSpheres/ssim_ADMM_2.txt}
			\unskip,
			RMSE=\protect\input{Experiments/OrganicSpheres/rmse_ADMM_2.txt}\unskip,
			MAE=\protect\input{Experiments/OrganicSpheres/mae_ADMM_2.txt}
		}
	\end{subfigure}\hs
	\begin{subfigure}{\figwidth}
		\includegraphics[width=\textwidth,height=\hh]{Experiments/OrganicSpheres/result_ADMM_3.png}
		\subcaption*{MSSIM=\protect\input{Experiments/OrganicSpheres/ssim_ADMM_3.txt}
			\unskip,
			RMSE=\protect\input{Experiments/OrganicSpheres/rmse_ADMM_3.txt}\unskip,
			MAE=\protect\input{Experiments/OrganicSpheres/mae_ADMM_3.txt}
		}
	\end{subfigure}\vs
	\begin{subfigure}{\figwidth}
		\makebox[0pt][r]{\makebox[1.25em]{\raisebox{\rb}{\rotatebox[origin=c]{90}
			{\scriptsize \textbf{Potts S-CG}}}}}%
		\includegraphics[width=\textwidth,height=\hh]{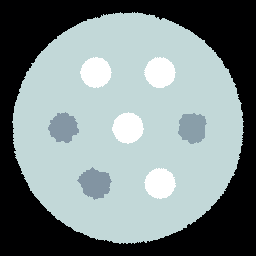}
		\subcaption*{MSSIM=\protect\input{Experiments/OrganicSpheres/ssim_super_1.txt}
			\unskip,
			RMSE=\protect\input{Experiments/OrganicSpheres/rmse_super_1.txt}\unskip,
			MAE=\protect\input{Experiments/OrganicSpheres/mae_super_1.txt}
		}
	\end{subfigure}\hs
	\begin{subfigure}{\figwidth}
		\includegraphics[width=\textwidth,height=\hh]{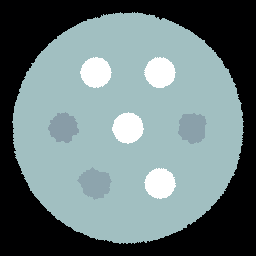}
		\subcaption*{MSSIM=\protect\input{Experiments/OrganicSpheres/ssim_super_2.txt}
			\unskip,
		RMSE=\protect\input{Experiments/OrganicSpheres/rmse_super_2.txt}\unskip,
		MAE=\protect\input{Experiments/OrganicSpheres/mae_super_2.txt}
	}
	\end{subfigure}\hs
	\begin{subfigure}{\figwidth}
		\includegraphics[width=\textwidth,height=\hh]{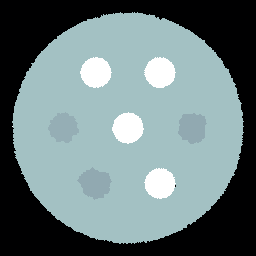}
		\subcaption*{MSSIM=\protect\input{Experiments/OrganicSpheres/ssim_super_3.txt}
			\unskip,
			RMSE=\protect\input{Experiments/OrganicSpheres/rmse_super_3.txt}\unskip,
			MAE=\protect\input{Experiments/OrganicSpheres/mae_super_3.txt}
		}
	\end{subfigure}\vs
	\caption{Reconstruction of the organic spheres phantom.
	The channel-wise TV result exhibit blurry boundaries and smooths out the fat segments in the third channel.
	TNV yields an improved reconstruction for the third channel. However,
	the boundaries are still blurry. The dTVp method produces sharp boundaries in all three channels, but
	introduces some spurious artifacts near the boundaries.
	Potts ADMM and Potts S-CG 
	recover the segments and provide sharp bound\-aries through\-out the channels.
	Further, they show fewer artifacts and
	achieve the highest MSSIM values
	for all three channels.\label{fig:OrganicPhantomReconstructions}}
\end{figure}

\section{Conclusion}
\label{sec:Conclusion}

We considered the reconstruction problem in multi-spectral CT, focusing 
on indirect measurements from energy-discriminating photon-counting detectors.
We proposed the multi-channel Potts prior
which promotes piecewise constant reconstructions.
We found that the multi-channel Potts prior provides a strong 
channel coupling in  that the jumps in the solution are enforced to be at the same spatial positions
across the channels.
This property is especially beneficial for  multi-channel images 
in multi-spectral CT as their channels have a strong structural correlation.
We employ the multi-channel Potts prior by minimizing the multi-channel Potts model.
To this end, we adapted the ADMM strategy proposed in \cite{storath2015joint} to the multi-channel reconstruction problem in multi-spectral CT.
Furthermore, we proposed new reconstruction approaches based on Potts superiorization of the
conjugate gradients method (CG). More precisely, the CG iterates 
were perturbed w.r.t.\,the (block-wise) Potts prior
towards more desirable solutions. 
We have shown that one obtains non-ascending directions w.r.t.\,the block-wise
Potts prior when taking steps towards its proximal mapping.
The corresponding superiorization approach, which perturbs the iterates
by adding these non-ascending directions, yielded
better results than the non-perturbed CG.
Furthermore, we provided theory  that ensures termination
	of the resulting algorithm.
In practice, we observed further improved results 
when we perturbed the iterates with the proximal mapping itself.
Based on this observation, we developed a new Potts-based superiorization approach, which we called Potts S-CG. Potts S-CG uses the proximal mapping as before and 
additionally lets the underlying PWLS problem evolve
in the course of the iterations, so that the final result
becomes genuinely piecewise constant. 

The energy minimization approach (Potts ADMM) and 
the superiorization approach (Potts S-CG) both produce solutions which are 
(multi-channel) Potts-regularized.
We identified Potts ADMM and Potts S-CG as suitable choices 
within their respective class of methods
by comparing them to a penalty method
and a method which Potts-superiorizes the Landweber iteration, respectively.
Despite the different abstract
interpretations, the iterations in Potts ADMM as well as
Potts S-CG involve a data step and a regularizing step. 
The latter decomposes into
univariate Potts problems (solved efficiently by dynamic programming).
A significant difference between Potts ADMM and Potts S-CG is that 
the data step of Potts ADMM corresponds to solving an $\ell_2$-regularized problem,
while the data step of Potts S-CG corresponds to a single 
CG step.

In our numerical experiments,
we applied Potts ADMM and Potts S-CG to simulated multi-spectral CT data and compared them to the
existing TV-based methods. Our Potts prior based methods produced sharper edges and mostly higher MSSIM values
than  TV-type methods. We attribute a large extend of these improvements to the channel-coupling promoted by the multi-channel Potts prior.

\section*{Acknowledgements}
We thank J. S. J\o rgensen (Technical University of Denmark) for simulation code and D. Kazantsev (Diamond Light Source, UK) for his advice on the provided Github code.
L. Kiefer and A. Weinmann were supported by the German Research Foundation (DFG) Grant WE5886/4-1. M. Storath was supported by DFG Grant STO1126/2-1.
\appendix
\setcounter{section}{1}
\section*{Appendix}
\subsection{Derivation of the ADMM subproblems \eqref{eq:Vstep}-\eqref{eqUstep}.}
\label{sec:derivationADMM}
	In each iteration, we minimize the Lagrangian $\mathcal{L}$ as given in \eqref{eq:AugmentedLagrangian}
w.r.t. $v$ and $u_1,\ldots,u_S$. 
To simplify the expressions for $\argmin_{v} \mathcal{L}$ and $\argmin_{u_s} \mathcal{L}$,
we will use repeatedly the fact that 
\begin{equation}\label{eq:transformSum}
	\sum_{i=1}^{n} x_i(p-t)^2 ) = 
	\bigg( \sum_{i=1}^n x_i \bigg)
	\bigg(  p-\frac{\sum_{i=1}^{N}t_ix_i}{\sum_{i=1}^{n}x_i} \bigg)^2 + K
\end{equation}
holds for $p,t_1,\ldots,t_n\in\R$ and $x_1,\ldots,x_n>0$ and a constant $K$ 
that does not depend on $p$; see, e.g., \cite{storath2015joint}.
After dropping the terms that do not depend on $v$, the subproblem w.r.t. $v$ reads
\begin{equation}\label{eq:AppSubproblemV1}
	\argmin_{v} \sum_{c=1}^{C}
	\mathcal{D}(Av_c,f_c) + \sum_{s=1}^{S} \tfrac{\rho}{2} \| v-u_s + \tfrac{\tau_s}{\rho}\|^2.
\end{equation}
Applying \eqref{eq:transformSum} to \eqref{eq:AppSubproblemV1} yields
exactly the subproblem in $v$ as formulated in \eqref{eq:Vstep}. 
The subproblem w.r.t. $u_s$ (again, after dropping terms that do not depend on $u_s$) 
is given by 
\begin{equation}\label{eq:AppSubproblemU1}
	\argmin_{u_s}\, \gamma\omega_s \| \nabla_{d_s} u_s\|_0+
	\tfrac{\rho}{2} \| v-u_s + \tfrac{\tau_s}{\rho}\|^2
	+\sum_{r=1}^{s-1} \| u_s-u_r - \tfrac{\lambda_{r,s}}{\mu}\|^2
	+\sum_{t=s+1}^{S} \| u_s-u_t + \tfrac{\lambda_{s,t}}{\mu}\|^2.
\end{equation}
After applying \eqref{eq:transformSum} to \eqref{eq:AppSubproblemU1}, we obtain
\begin{equation}\label{eq:AppSubproblemU2}
	\begin{split}
		\argmin_{u_s}\,&\gamma\omega_s \| \nabla_{d_s} u_s\|_0+
		\tfrac{\rho}{2} \| v-u_s + \tfrac{\tau_s}{\rho}\|^2\\
		&+ \tfrac{(s-1)\mu}{2} \Big\| u_s - \frac{\sum_{r=1}^{s-1}(u_r+\tfrac{\lambda_{r,s}}{\mu})}{s-1}\Big\|^2
		+ \tfrac{(S-s)\mu}{2} \Big\| u_s - \frac{\sum_{t=s+1}^{S}(u_t-\tfrac{\lambda_{s,t}}{\mu})}{S-s}\Big\|^2.
	\end{split}
\end{equation}
We use again \eqref{eq:transformSum}, which yields
\begin{equation}\label{eq:AppSubproblemU3}
	\argmin_{u_s}\gamma\omega_s \| \nabla_{d_s} u_s\|_0+
	\tfrac{\rho}{2} \| v-u_s + \tfrac{\tau_s}{\rho}\|^2 +
	\tfrac{(S-1)\mu}{2}
	\bigg\|
	u_s - \frac{\sum_{r=1}^{s-1}(u_r+\tfrac{\lambda_{r,s}}{\mu}) + 
		\sum_{t=s+1}^{S}(u_t-\tfrac{\lambda_{s,t}}{\mu})}{S-1}
	\bigg\|^2.
\end{equation}
A final application of \eqref{eq:transformSum} leads to 
\begin{equation}\label{eq:AppSubproblemU4}
	\argmin_{u^s}\gamma\omega_s \| \nabla_{d_s} u_s\|_0+
	\tfrac{\rho + (S-1)\mu}{2}
	\bigg\|
	u_s - \frac{\rho v+\tau_s +\sum_{r=1}^{s-1}(u_r+\tfrac{\lambda_{r,s}}{\mu}) + 
		\sum_{t=s+1}^{S}(u_t-\tfrac{\lambda_{s,t}}{\mu})}{S-1}
	\bigg\|^2
\end{equation}
and multiplying \eqref{eq:AppSubproblemU4} by $\tfrac{2}{\rho + (S-1)\mu}$
yields subproblem \eqref{eqUstep}.

\subsection{Conjugate gradient step for a generic least squares problem.}
In Algorithm \ref{alg:CGstep}, we provide the pseudocode 
of an iteration of the conjugate gradient method 
applied to the normal equations of a generic least squares problem 
$\| Bx -b\|^2$ (cf.\,\cite[Alg. 8]{zibetti2018super}).

\begin{algorithm}[H]
	\SetAlgoRefName{A.1}
	\def\vs{\\[0.3em]}
	\caption{\label{alg:CGstep}
		CG step for a least squares problem
	}
	\SetCommentSty{footnotesize}
	\PrintSemicolon
	\KwIn{System matrix $B$, current iterate $x$,
		current auxiliarly vectors $p,h$}
	\KwOut{Updated $x,p,h$
	}
	$r\leftarrow B^T(Bx-b)$
	
	$\alpha = \langle r,h\rangle / \langle p,h\rangle$
	
	$ p\leftarrow -r+\alpha p$
	
	$h \leftarrow B^TBp$
	
	$\kappa=-\langle r,p\rangle/\langle p,h\rangle$
	
	$x \leftarrow x + \kappa p $
\end{algorithm}

\subsection{Conjugate gradient step for the augmented PWLS problem \eqref{eq:weightedLSsplitIsotropicNormalEquations}}
In Algorithm \ref{alg:GCstepSuper}, we provide the pseudocode 
of a step of the CG method
applied to the normal equations corresponding to the augmented PWLS problem 
\eqref{eq:weightedLSsplitIsotropicNormalEquations}.

\begin{algorithm}[H]
	\SetAlgoRefName{A.2}
	\def\vs{\\[0.3em]}
	\caption{\label{alg:GCstepSuper}
		CG step for the augmented weighted least squares problem \eqref{eq:weightedLSsplitIsotropicNormalEquations}
	}
	\SetCommentSty{footnotesize}
	\PrintSemicolon
	\KwIn{Forward operator $A$, multispectral sinogram $f$, weights $W_c$, coupling parameter $\mu$, current iterates $u,p,h$}
	\KwOut{Updated iterates $u,p,h$
	}
	\BlankLine
	\For{$c=1,\ldots,C$}{
		$r_{s,c} \leftarrow A^TW_cAu_{s,c} - A^TW_cf_c + \sum_{t\neq s}\mu^2(u_{s,c}-u_{t,c})
		\quad\text{for all $s=1,\ldots,S$,}$
		
		$\alpha = \frac{\sum_s (r_{s,c})^T h_{s,c}}{\sum_s(p_{s,c})^T h_{s,c}},$
		
		$p_{s,c} \leftarrow -r_{s,c} + \alpha p_{s,c}
		\quad\text{for all $s=1,\ldots,S$,}$
		
		$h_{s,c} \leftarrow A^TW_cA p_{s,c} + \sum_{t\neq s} \mu^2\ (p_{s,c}-p_{t,c})
		\quad\text{for all $s=1,\ldots,S$,}$
		
		$\kappa = - \frac{\sum_{s} (r_{s,c})^Tp_{s,c}}{\sum_{s}(p_{s,c})^Th_{s,c}},$
		
		$u_{s,c} \leftarrow u_{s,c}+\kappa p_{s,c}
		\quad\text{for all $s=1,\ldots,S$}$
	}
\end{algorithm}

\subsection{Proofs} We here provide the proofs of \Cref{lemma:neighboringEqualityPotts}, \Cref{prop:nonascending} and \Cref{prop:WeightedLSBlockwiseEquality}.
\begin{proof}[Proof of \Cref{lemma:neighboringEqualityPotts}]
	We note that the statement follows from the analogous statement for the univariate Potts problem
	as the proximal mapping of the block-wise Potts prior 
	corresponds to row- and column-wise univariate Potts problems
	(as discussed below \eqref{eq:BlockwisePottsPriorAniso}).
	For the univariate Potts problem, the statement was proven in \cite[Lemma 4.2]{winkler2002smoothers}.
\end{proof}

\begin{proof}[Proof of \Cref{prop:nonascending}]
	The proof essentially follows  from \Cref{lemma:neighboringEqualityPotts}:
	starting from $u=(u_1,u_2)$, no additional jumps will be opened in $u + t\cdot v$ for any
	$t\geq 0$ as $v_1$ is constant on the (discrete) row intervals of constant value of 
	$u_1$ and $v_2$ on the (discrete) column intervals of constant value of $u_2$.
	Consequently, $F\big(u + t\cdot v\big) \leq F(u)$ for all $t\geq 0$ 
	which completes the proof.
\end{proof}

\begin{proof}[Proof of \Cref{prop:WeightedLSBlockwiseEquality}]
	(i) Let $u^\ast$ be a minimizer of \eqref{eq:weightedLSmultichannel} and define
	$(v_1,\ldots,v_S) = (u^\ast,\ldots,u^\ast)$. We show that $(v_1,\ldots,v_S)$ satisfies the normal equations \eqref{eq:weightedLSsplitIsotropicNormalEquations} which is sufficient for a minimizer of 
	\eqref{eq:weightedLSsplitIsotropic}. It holds by definition that $v_s = v_t$ for all
	$s,t$. As $u^\ast$ satisfies the normal equations of \eqref{eq:weightedLSmultichannel},
	we further obtain $A^TWAv_{s,c} = A^TWf_c$ for all channels $c=1,\ldots,C$. Together, 
	$(v_1,\ldots,v_S)$ satisfies \eqref{eq:weightedLSsplitIsotropicNormalEquations} for
	all channels $c=1,\ldots,C$.
	(ii) We show that the equality of the block variables $u_s^\ast$ 
	of a minimizer follows from optimality. To this end,
	let $(u_1^\ast,\ldots,u_S^\ast)$ be a minimizer of \eqref{eq:weightedLSsplitIsotropic}.
	Towards a contradiction, we assume that there are 
	$s\neq t$ such that $u_s^\ast \neq u_t^\ast$ which in particular means
	$\sum_{s=1}^S \sum_{t=s+1}^{S} \frac{1}{2} \| u_{s,c}^\ast - u^\ast_{t,c}\| > 0$ for at least one channel $c$.
	For $(v_1,\ldots,v_S):=(u^\ast,\ldots,u^\ast)$ as above we have that
	$\sum_{s=1}^{S}\frac{1}{2} \| W^{\frac{1}{2}}Av_c^\ast - W^{\frac{1}{2}}f_c\|^2$ is minimal
	for each $c$ in view of $u^\ast$ being a minimizer of \eqref{eq:weightedLSmultichannel}.
	Further, by definition the quadratic deviations between the block variables vanish, i.e.,
	$\sum_{s=1}^S \sum_{t=s+1}^{S} \frac{1}{2} \| v_{s,c}- v_{t,c}\|^2 = 0$ for all
	$c$.
	As the sum of these two terms corresponds to the objective 
	in \eqref{eq:weightedLSsplitIsotropic},
	$(v_1,\ldots,v_S)$ yields a lower objective value than $(u_1^\ast,\ldots,u_S^\ast)$ which
	is a contradiction, so $u_1^\ast = \ldots = u^\ast_S$.
	From the normal equations \eqref{eq:weightedLSsplitNormalEquationsIso} follows
	immediately  $A^TWAu^\ast_{1,c} = A^TWf_c$ for all $c$ which corresponds to the normal
	equations of \eqref{eq:weightedLSmultichannel}.
	(iii) The third assertion follows from (i) and (ii) together with the uniqueness of 
	least squares problems for full-rank matrices.
\end{proof}
\FloatBarrier
\section*{References}
{
\small
\bibliographystyle{plain}
\bibliography{MultispectralCT,SPLiterature}
}
\end{document}